\newcommand{\nospacepunct}[1]{\makebox[0pt][l]{\,#1}}
\newtheorem{theorem}{Theorem}[section]
\newtheorem*{theorem*}{Theorem}
\newtheorem{lemma}[theorem]{Lemma}
\newtheorem*{lemma*}{Lemma}
\newtheorem{corollary}[theorem]{Corollary}
\newtheorem{proposition}[theorem]{Proposition}
\newtheorem{definition}[theorem]{Definition}
\newtheorem{remark}[theorem]{Remark}
\newtheorem{examples}[theorem]{Examples}
\newtheorem{thm}{Theorem}[section]
\newtheorem{ques}{Question}[section]
\numberwithin{equation}{section}
\newcommand{\A}{\mathcal A}
\newcommand{\cB}{\mathcal B}
\newcommand{\cD}{\mathcal D}
\newcommand{\cF}{\mathcal F}
\newcommand{\cH}{\mathcal H}
\newcommand{\cI}{\mathcal I}
\newcommand{\cJ}{\mathcal J}
\newcommand{\cM}{\mathcal M}
\newcommand{\cR}{\mathcal R}
\newcommand{\cU}{\mathcal U}
\newcommand{\cX}{\mathcal X}
\def\Bz{\mathbb{B}}
\def\Cz{\mathbb{C}}
\def\Ez{\mathbb{E}}
\def\Fz{\mathbb{F}}
\def\Kz{\mathbb{K}}
\def\1z{\mathbb{1}}
\newcommand{\fC}{\mathfrak C}
\newcommand{\fG}{\mathfrak G}
\newcommand{\dom}{\textup{dom}}
\newcommand{\im}{\textup{im}}
\newcommand{\acts}{\text{\reflectbox{$\uptodownarrow$}}}
\newcommand{\coacts}{\downtouparrow}
\newcommand{\supp}{\textup{supp}}
\newcommand{\Ad}{\textup{Ad}}
\newcommand{\id}{\textup{id}}
\newcommand{\mfj}{\mathfrak{j}}
\newcommand{\env}{\textup{env}}
\newcommand{\Fin}{\text{Fin}}
\newcommand{\alg}{\textup{alg}}
\newcommand{\eps}{\varepsilon}
\newcommand{\spn}{\textup{span}}
\renewcommand{\max}{\textup{max}}
\newcommand{\mfd}{\mathfrak{d}}
\newcommand{\mft}{\mathfrak{t}}
\newcommand{\mfr}{\mathfrak{r}}
\newcommand{\mfs}{\mathfrak{s}}
\newcommand{\mff}{\mathfrak{f}}
\newcommand{\fix}{\textup{fix}}
\newcommand{\Env}{\textup{Env}}
\subjclass[2020]{Primary: 46K50, 46L05, 47L55; Secondary: 46L55, 47L65.}
\keywords{C*-envelope, coaction, small category, groupoid, C*-algebra, boundary quotient}
\begin{document}

\title{Normal coactions extend to the C*-envelope}
\author[K.A. Brix]{Kevin Aguyar Brix}
\address[K.A. Brix]{Department of Mathematical Sciences, University of Southern Denmark, 5230 Odense, Denmark}
\email{kabrix.math@fastmail.com}

\author[C. Bruce]{Chris Bruce}
\address[C. Bruce]{School of Mathematics, Statistics and Physics, Herschel Building, Newcastle University, Newcastle upon Tyne, NE1 7RU, UK}
\email{chris.bruce@newcastle.ac.uk}

\author[A. Dor-On]{Adam Dor-On}
\address[A. Dor-On]{Department of Mathematics, University of Haifa, Mount Carmel, Haifa 3103301, Israel}
\email{adoron.math@gmail.com}

\thanks{K.A. Brix was supported by a DFF-international postdoc grant 1025-00004B. C. Bruce has received funding from the European Union’s Horizon 2020 research and innovation programme under the Marie Sklodowska-Curie grant agreement No 101022531. A. Dor-On was partially supported by an NSF / BSF grant no. 2530543 / 2023695 (respectively) and a DFG Middle-Eastern collaboration project no. 529300231. This research was partially supported by a European Research Council (ERC) under the European Union’s Horizon 2020 research and innovation programme 817597 and the London Mathematical Society through a Research in Pairs Grant 42204.}

\date{}
\maketitle

\begin{abstract}
We show that a normal coaction of a discrete group on an operator algebra extends to a normal coaction on the C*-envelope. This resolves an open problem attempted by several experts in the area, and provides a more direct proof of a prominent result of Sehnem. 
As an application, we resolve a question of X. Li, where we identify the C*-envelopes of the operator algebras of groupoid-embeddable categories and of cancellative right LCM monoids. This latter class includes many examples of monoids that are not group-embeddable.
\end{abstract}

\section{Introduction}
In this paper, we show that every normal coaction of a discrete group on an operator algebra extends to a coaction on the C*-envelope (Theorem~\ref{thm:A}). Our proof relies only on elementary principles from non-abelian duality theory for coactions on \emph{operator algebras} inspired by the theory of non-abelian duality for coactions on C*-algebras as developed by Echterhoff, Kaliszewski, Quigg, and Raeburn \cite{EKQR}, as well as duality theory for actions by abelian groups on operator algebras as developed by Katsoulis and Ramsey \cite{KR, KR18}, to name but a few.
Our work resolves an open problem considered by several experts in the area, and yields a shorter, conceptual proof of a prominent result of Sehnem. As a consequence of our results, the C*-envelope always coincides with the equivariant C*-envelope for a normal coaction (Corollary~\ref{cor:main}).

For an operator algebra arising from a cancellative small category, we use Theorem~\ref{thm:A} combined with a novel \'{e}tale groupoid-theoretic approach to find a sufficient condition for its C*-envelope to coincide with the boundary quotient C*-algebra of the category. This allows us to resolve a question of Li, and identify the C*-envelopes of operator algebras arising from groupoid-embeddable categories (e.g., all submonoids of groups) and all cancellative right LCM monoids with the boundary quotient C*-algebra of the category/monoid (Theorems~\ref{thm:B} and \ref{thm:C}).
We also provide a simple computation of the C*-envelope for all finitely aligned higher-rank graphs, even all $P$-graphs (Theorem~\ref{thm:Pgraphs}).

\subsection*{Context and main result}
In a series of seminal papers, Arveson introduced the C*-envelope of an operator algebra as a non-commutative generalization of the Shilov boundary from function theory \cite{Arv69,Arv72,Arv98}. The C*-envelope was shown to exist through the use of the injective envelope in the work of Hamana \cite{Ham79}. Today, there are dilation theoretic ways of defining the C*-envelope as a universal C*-algebra, which then aid in its computation. For instance, the C*-envelope can be defined as the universal C*-algebra with respect to boundary representations, as proven by Arveson in the separable case \cite{Arv08} and by Davidson--Kennedy in general \cite{DKe15}.

The C*-envelope $C_\env^*(\A)$ of an operator algebra $\A$ is the smallest C*-algebra containing a completely isometric copy of $\A$ (see \S~\ref{ss:coactions} for the precise definition).
The structure and invariants of the C*-envelope then provide a new lens for studying non-selfadjoint operator algebras via selfadjoint (i.e., C*-algebra) theory. Arveson's works provide a deep and fruitful connection between the theories of selfadjoint and non-selfadjoint operator algebras, and it is now well-established that structural results for C*-envelopes are crucial for discovering and making use of new invariants for the underlying operator algebra. Indeed, in \cite{DFK17, DK20, DKKLL, Seh22} non-self-adjoint techniques are used to establish gauge-compatible co-universal properties for C*-algebras, in \cite{KR, Katsoulis, DK20, KR:IMRN} the identification of the C*-envelope (or the coaction C*-envelope in \cite{DKKLL}) is used to resolve crossed product Hao--Ng isomorphism problems, and in \cite{DEG} the computation of the C*-envelope together with $K$-theory for C*-algebras are used to classify operator algebras arising from directed graphs up to different notions of isomorphisms. 

Beyond interactions with C*-algebra theory, C*-envelopes appear in relation to important open problems in non-commutative boundary theory. These include the Arveson--Douglas essential normality conjecture \cite{Arv05} and Arveson's hyperrigidity conjecture \cite{Arv11} (which was recently resolved by the third-named author together with Bilich in \cite{BD+}), with applications to the theory of analytic varieties and approximation theory of functions. With such potential applications in mind, Arveson initiated a program of finding explicit descriptions for C*-envelopes of naturally occurring non-selfadjoint operator algebras. A general context in which one can answer this question normally consists of a Toeplitz C*-algebra $\mathcal{T}$ considered as a C*-cover of a canonical norm-closed subalgebra $\mathcal{T}^+$ both generated by operators that comprise an analogue of a left regular representation. This non-selfadjoint operator algebra $\mathcal{T}^+$ is often called the \emph{tensor algebra} of the context, and a general scheme for computing its C*-envelope treats several classes of operator algebras simultaneously. 

Starting with the works of Muhly and Solel \cite{MS98}, followed by Fowler, Muhly, and Raeburn, \cite{FMR03}, it was shown for large classes of C*-correspondences that the C*-envelope of a tensor algebra $\mathcal{T}^+_X$ of a C*-correspondence $X$ is the Cuntz--Pimsner algebra $\mathcal{O}_X$ in the sense of Pimsner \cite{Pimsner97}. After Katsura's refinement of Pimsner's definition \cite{Kat04}, this result was shown to hold for general C*-correspondences by Katsoulis and Kribs \cite{KK06}.

A natural ground for further extension, unification, as well as insight was made in the work of Fowler on discrete product systems of C*-correspondences over monoids \cite{Fow02}. Fowler's Toeplitz algebras generalize left regular semigroup C*-algebras, as well as Toeplitz C*-algebras for C*-correspondences. Later, Sims and Yeend \cite{SY10} exhibited a candidate for the C*-envelope for a large class of tensor algebras in Fowler's context. This Sims--Yeend C*-algebra was later shown by Carlsen, Larsen, Sims, and Vitadello \cite{CLSV11} to satisfy an appropriate gauge-compatible co-universal property, implying natural gauge-invariant uniqueness theorems for these C*-algebra. The Sims--Yeend candidate was later put in a broader context by Sehnem \cite{Seh18}, who introduced strong covariance algebras for arbitrary product systems over group-embeddable monoids.

Nearly 20 years after Fowler's paper, the third-named author together with Katsoulis \cite{DK20} extended the result of Katsoulis and Kribs from \cite{KK06}. They computed the C*-envelope in Fowler's context when the monoid embeds in an \emph{abelian} lattice-ordered group, and showed that Seh\-nem's covariance algebra coincides with the C*-envelope in this setting.

Despite some developments, the lack of Pontryagin duality for non-abelian groups posed a significant challenge for determining the C*-envelope of tensor algebras in Fowler's context beyond monoids that embed in abelian groups (see \cite{DKKLL, KKLL22a,KKLL22b}).
This problem is intimately related to coactions on operator algebras. More precisely, all tensor algebras mentioned above come equipped with a canonical normal coaction of a discrete group (see \S~\ref{ss:coactions} for the precise definitions). It was observed in \cite{DK20} that the problem of computing the C*-envelope in Fowler's context can be abstracted to the more general problem of whether the canonical coaction on an operator algebra extends to its C*-envelope.

\begin{ques}
\label{ques:extends?}
Suppose there is a coaction of a discrete group $G$ on an operator algebra $\A$. Does this coaction extend to a coaction of $G$ on $C^*_{\env}(\A)$?
\end{ques}

Progress on this question in Fowler's context for normal coactions (e.g., all coactions by amenable groups) was made in \cite{KKLL22a,KKLL22b} under additional assumptions, and was eventually resolved completely for group-embeddable monoids by Sehnem in \cite{Seh22}. More precisely, Sehnem showed that the covariance algebra from \cite{Seh18} coincides with the C*-envelope for product systems over an arbitrary group-embeddable monoid, circumventing Question \ref{ques:extends?} for general normal coactions.
Sehnem's paper \cite{Seh22} was a major advance and marks the state-of-the-art, but the proofs require intricate technical arguments in the specific setting of product systems, leaving Question~\ref{ques:extends?} open in general (see \cite[Remark~5.2]{Seh22}). 

Duality theory for operator algebras has a long history in the context of C*-algebras, dating back to the work of Takesaki and Takai in the 1970s \cite{Tak73,Tak75}. Surprisingly, this theory has found fertile grounds only recently in the general theory of operator algebras with the work of Katsoulis and Ramsey \cite{KR}, where a version of Takai duality for general operator algebras was proven. This led to applications in establishing semisimplicity of certain non-selfadjoint operator algebras. Further applications of duality theory of operator algebras are found in \cite{KR18,EKR25}. Thus, duality theory is emerging as a powerful tool for the study of non-selfadjoint operator algebras. One of our main contributions in this paper is to showcase the strength of \emph{non-abelian} duality theory of operator algebras to resolve Question~\ref{ques:extends?} for all normal coactions.

\begin{thm}[Theorem~\ref{thm:main}]
\label{thm:A}
Let $\A$ be an operator algebra with a contractive self-adjoint approximate identity. Then, any normal coaction of a discrete group $G$ on $\A$ has a (necessarily unique) extension to a normal coaction of $G$ on $C_\env^*(\A)$. 
\end{thm}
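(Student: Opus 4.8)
The plan is to realize $C^*_{\env}(\A)$ as a quotient of a suitable C*-cover of $\A$ on which the coaction visibly lives, and then show that the quotient map descends the coaction. Concretely, let $\delta\colon \A \to \A \otimes C^*(G)$ be the given normal coaction (or, after choosing a faithful completely isometric representation, think of $\delta$ as a coaction relative to the universal C*-cover). Fix any C*-cover $(\cC, \iota)$ of $\A$ such that the coaction $\delta$ extends to a (not necessarily injective) normal coaction $\delta_\cC$ on $\cC$ — for instance, one builds this from the reduced crossed product / restricted representation coming from $\delta$, or one simply takes $\cC$ to be the C*-envelope of the crossed product $\A \rtimes_\delta G$ paired back appropriately. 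The key structural input is the Fell-bundle / cosystem picture: a normal coaction of $G$ on $\cC$ is equivalent to a topological $\Zz[G]$-grading, i.e. a family of spectral subspaces $\cC_g$ with $\cC_e$ a C*-subalgebra and $\overline{\sum_g \cC_g} = \cC$, and the coaction being \emph{normal} means precisely that the conditional-expectation-type map onto $\cC_e$ given by the reduced coaction is faithful.

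Next I would carry out the reduction to the boundary. The C*-envelope is the quotient of $\cC$ by the Shilov ideal $\cJ$, i.e. the largest ideal $\cJ \trianglelefteq \cC$ with $\cJ \cap \iota(\A) = 0$ (equivalently, the kernel of the maximal boundary representation). The whole proof hinges on showing that $\cJ$ is a \emph{graded} (equivalently, coaction-invariant) ideal: $\delta_\cC(\cJ) \subseteq \cJ \otimes C^*(G)$, equivalently $\cJ = \overline{\sum_g (\cJ \cap \cC_g)}$. Once this is known, $\delta_\cC$ descends to the quotient $\cC/\cJ = C^*_{\env}(\A)$, and one checks that the descended coaction restricts to $\delta$ on $\A$ (immediate, since $\A \hookrightarrow \cC \twoheadrightarrow \cC/\cJ$ is the canonical embedding and $\delta_\cC$ restricts to $\delta$ on $\A$), that it is again a coaction (the coaction identity and nondegeneracy pass to quotients), and that it is again normal (this is where one uses that the faithful conditional expectation on $\cC_e$ induces a faithful one on the quotient $\cC_e/(\cJ\cap\cC_e)$, using that $\cJ$ is graded so $\cC_e/(\cJ \cap \cC_e)$ sits inside $\cC/\cJ$). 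Uniqueness is then automatic: any two extensions agree on the dense subalgebra generated by $\A$, hence everywhere.

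The heart of the matter — and the step I expect to be the main obstacle — is showing that the Shilov ideal $\cJ$ is graded. The obvious approach is a \emph{cosystem / equivariant injective envelope} argument: one wants the whole machine producing $C^*_{\env}$ (Hamana's injective envelope, or the boundary representations of Davidson–Kennedy) to be carried out $G$-equivariantly. Here is where the non-abelian duality for coactions enters and where I would concentrate the real work. The idea is to pass to the crossed product: the coaction $\delta$ on $\A$ gives a crossed-product operator algebra $\A \rtimes_\delta G$ carrying a dual \emph{action} of $G$ (not a coaction), and actions by groups are much easier to push to the C*-envelope, because one can average / use the equivariant injective envelope of Hamana for group actions, which is standard. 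So the scheme is: (i) form $\A \rtimes_\delta G$ with its dual $G$-action; (ii) invoke (the operator-algebra analogue of) the fact that the C*-envelope of a crossed product by a group action is the crossed product of the C*-envelope, i.e. $C^*_{\env}(\A \rtimes_\delta G) \cong C^*_{\env}(\A) \rtimes G$ for the extended action — this is exactly the Hao–Ng–type statement whose \emph{coaction} version we are after, but in the \emph{action} direction it is known; (iii) apply Katayama/Imai–Takai-type crossed-product duality for operator algebras to recover $C^*_{\env}(\A)$ together with its coaction by decomposing $C^*_{\env}(\A\rtimes_\delta G)\rtimes \hat G$. The normality hypothesis is exactly what makes this duality loop close (a non-normal coaction would require distinguishing full and reduced crossed products, and the duality would fail to return the right algebra). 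The technical crux is verifying that each of these isomorphisms is natural enough to intertwine the relevant structure maps and that it restricts correctly to the copy of $\A$; this is the place where one must be careful, and it is where I would spend the bulk of the argument, modeling it on the Echterhoff–Kaliszewski–Quigg–Raeburn framework adapted from C*-algebras to operator algebras.
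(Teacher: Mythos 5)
Your operative argument --- form $\A\rtimes_\delta G$ with its dual action $\hat{\delta}$, push that \emph{action} to the C*-envelope, invoke the Hao--Ng-type identification of $C^*_\env$ of the double crossed product with $C^*_\env(\A\rtimes_\delta G)\rtimes^r_{\hat{\delta}}G$, and close the loop with an operator-algebraic Katayama duality, using normality to make the reduced duality return the right algebra --- is exactly the paper's proof of Theorem~\ref{thm:main}; the Shilov-ideal/graded-ideal framing in your first two paragraphs is never actually carried out and the paper dispenses with it entirely, constructing the coaction on $C^*_\env(\A)$ directly from the transported double-dual coaction. Two cautions on the details you defer: the isomorphism you write in step (ii), $C^*_\env(\A\rtimes_\delta G)\cong C^*_\env(\A)\rtimes G$, is the \emph{coaction} Hao--Ng statement and is circular as written (the right-hand side presupposes the extension you are trying to build) --- what is actually available is Katsoulis's theorem $C^*_\env\bigl((\A\rtimes_\delta G)\rtimes^r_{\hat{\delta}}G\bigr)\cong C^*_\env(\A\rtimes_\delta G)\rtimes^r_{\hat{\delta}}G$ for the dual \emph{action}, whose application requires first checking that $\A\rtimes_\delta G$ has a contractive approximate identity (Lemma~\ref{lem:coaction_cai}); and since Katayama duality (Theorem~\ref{thm:Katduality}) returns $\delta_\lambda(\A)\otimes\Kz$ rather than $\A$, you still need stability of the C*-envelope together with invariance of the corner $C^*_\env(\delta_\lambda(\A))\otimes P_e$ under the transported coaction before you can cut down to $C^*_\env(\A)$ itself.
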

By the co-universal property of the C*-envelope, every \emph{action} of a discrete group on an operator algebra extends to an action on the C*-envelope.
However, it is not at all clear whether a \emph{coaction} extends to the C*-envelope.
In order to overcome this problem, we extend Katayama duality \cite{Kat} to normal coactions of discrete groups on operator algebras (Theorem~\ref{thm:Katduality}). This allows us to embed the C*-envelope in a reduced double crossed product in such a way that the restriction of the canonical coaction on the double crossed product gives us the desired coaction. We provide an outline of the proof. 

Suppose $\delta\colon G \downtouparrow\A$ is a normal coaction of a discrete group $G$ on an operator algebra $\A$. To show that $C_\env^*(\A)$ carries a normal coaction of $G$ that extends $\delta$, we consider the embedding of $C_\env^*(\A)$ in $C^*_\env(\A) \otimes \Kz$ by tensoring with a rank-one projection on the second tensor factor, where $\Kz=\Kz(\ell^2(G))$. By Katayama duality for operator algebras (Theorem~\ref{thm:Katduality}), $\A \otimes\Kz$ is equivariantly isomorphic to a crossed product by $G$. Now by \cite[Theorem~2.5]{Katsoulis}, $C_\env^*(\A \otimes\Kz) \cong C^*_\env(\A)\otimes \Kz$ is also a reduced crossed product by $G$, and thus carries a canonical normal coaction of $G$. We then prove that $C_\env^*(\A)$ is invariant under this coaction, so that the restriction of this coaction to the image of $C_\env^*(\A)$ gives us a normal coaction on $C_\env^*(\A)$. We then verify on generators that this restriction is an extension of $\delta$. 

Our approach is concise, and leads to new avenues of research.
For instance, Theorem~\ref{thm:A} may lead to new applications in dilation theory,
and has important consequences for Arveson's program of computing C*-envelopes. 

First, in the special case of product systems over group-embeddable monoids, we show in Remark \ref{r:sehnems} how Theorem~\ref{thm:A} leads to a shorter, conceptually simpler proof of Sehnem's main result, \cite[Theorem~5.1]{Seh22} (thus also recovering results in \cite{KKLL22a, KKLL22b}). 
Second, for an operator algebra arising from a cancellative small category, we use Theorem~\ref{thm:A} to provide a sufficient condition for its C*-envelope to coincide with the boundary quotient C*-algebra of the category. We verify this sufficient condition in two important examples classes. In the case when the small category is groupoid-embeddable or in the case when the category is a cancellative (\emph{not} necessarily group-embeddable) right LCM monoid, which then yields a description of the C*-envelope, see Theorems~\ref{thm:B} and \ref{thm:C}. We emphasize that examples of operator algebras arising from cancellative small categories do not naturally fit into Fowler's context. On the other hand, examples of operator algebras that do fit in that context now enjoy a significantly more conceptual proof for computing their C*-envelope, avoiding the technical language of product systems.
As this application makes novel use of \'etale groupoid theory in the setting of non-selfadjoint operator algebras, we explain the context and consequences of our applications below.

\subsection*{Application to operator algebras generated by partial isometries}

Left cancellative small categories are studied as natural generalizations of monoids \cite{Doh15}, and have recently been used to resolve open problems on finiteness properties for Thompson-like groups and topological full groups \cite{Wit19, SWZ19, LiGarII}. 
C*-algebras of left cancellative small categories \cite{Spi14, Spi20, LiGarI} unify the theories of several classes of C*-algebras.
For instance, semigroup C*-algebras \cite{Co67, Li12, Li13, CELY, LS22}, which have connections to, e.g., algebraic number theory \cite{CDL, CEL, BL23};
Cuntz--Krieger algebras \cite{Cu77, CK80, Cu81} and (higher-rank) graph C*-algebras \cite{KPRR97, RS99, KP00, Rae05}, which led to the general theory of topological full groups \cite{Mat12,Mat15}
and have important connections to symbolic dynamics \cite{Kr80, Hu95, ERRS21, CRST21, CDE23}.

All C*-algebras mentioned above are generated by families of partial isometries such that the composition law (as operators on Hilbert space) can be interpreted as composing morphisms in a left cancellative small category. 
Such a category then provides an \emph{orientation} inside the C*-algebra, in the sense that it determines an operator algebra of partial isometries that need \emph{not} be closed under taking adjoints. We think of this operator algebra as an ``irreversible'' operator algebra inside the ambient C*-algebra.
This orientation corresponds to the ``one-sided'' or ``irreversible'' nature of the underlying dynamics, the direction in a directed graph, 
or a notion of positivity in a group. It was Spielberg who recently discovered this vastly general and unifying framework of left cancellative small categories \cite{Spi14, Spi20}.
See also \cite{OP, BKQ, LiGarI} for further developments.

A left cancellative small category $\fC$ determines a Toeplitz-type C*-algebra $C_\lambda^*(\fC)$ (Definition~\ref{def:Toeplitz}), a boundary quotient C*-algebra $\partial C_\lambda^*(\fC)$ (Definition~\ref{def:bdry}), and an operator algebra $\A_\lambda(\fC)\subseteq C_\lambda^*(\fC)$ (Definition~\ref{def:A(fC)}).
At the masterclass ``Dilation and classification in operator algebra theory'' at the University of Copenhagen in October 2022, Xin Li posed the following natural question.
\begin{ques}
\label{ques:Li}
 Given a left cancellative small category $\fC$, does the C*-envelope of $\A_\lambda(\fC)$ coincide with the boundary quotient C*-algebra $\partial C_\lambda^*(\fC)$?  
\end{ques}
 This question is known to have an affirmative answer for certain subclasses of left cancellative small categories, e.g., when $\fC$ is a higher-rank graph or a submonoid of a group, see \cite{DK20, Seh22}. However, new techniques are necessary to answer it for more general categories. 
 Our main application of Theorem~\ref{thm:A} is a sufficient condition for answering Question~\ref{ques:Li} that we verify for all groupoid-embeddable categories and all cancellative (not necessarily group-embeddable) right LCM monoids.

\begin{thm}[Theorem~\ref{thm:gpoid-embeddable}]
\label{thm:B}
Let $\fC$ be a subcategory of a groupoid $\fG$. Then, the C*-envelope of $\A_\lambda(\fC)$ is canonically *-isomorphic to the boundary quotient C*-algebra $\partial C_\lambda^*(\fC)$.
\end{thm}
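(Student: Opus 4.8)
The plan is to show that $\partial C_\lambda^*(\fC)$ is a C*-cover of $\A_\lambda(\fC)$ whose boundary ideal is trivial, using Theorem~\ref{thm:A} to force that boundary ideal to be coaction-invariant and then an \'etale-groupoid argument to rule it out. Throughout I work with the groupoid model of Definitions~\ref{def:Toeplitz} and~\ref{def:bdry}: $C_\lambda^*(\fC)=C_r^*(\cG_\fC)$ for an \'etale groupoid $\cG_\fC$ with totally disconnected unit space $\Omega$, and $\partial C_\lambda^*(\fC)=C_r^*(\cG_\fC|_{\partial\Omega})$ is the reduction to the closed invariant boundary $\partial\Omega\subseteq\Omega$. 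Write $q\colon C_\lambda^*(\fC)\to\partial C_\lambda^*(\fC)$ for the quotient, $v_\alpha$ for the partial isometry attached to $\alpha\in\fC$, $p_x=v_x$ for the projection attached to an object $x$, and $Z(\mu)$ for the cylinder set of a morphism $\mu$. As a subcategory of a groupoid, $\fC$ is (two-sidedly) cancellative, and the embedding $\fC\hookrightarrow\fG$ induces a normal coaction of a discrete group $G$ on $C_\lambda^*(\fC)$, namely the dual coaction of the canonical cocycle $c\colon\cG_\fC\to G$; this restricts to a normal coaction on $\A_\lambda(\fC)$, normality persisting to the subalgebra because $(\id\otimes\lambda)\circ\delta$ remains injective on it. Since $q$ annihilates the $\cG_\fC$-invariant ideal $C_r^*(\cG_\fC|_{\Omega\setminus\partial\Omega})$, which is in particular coaction-invariant, $\partial C_\lambda^*(\fC)$ inherits a normal coaction of $G$.

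First I would verify that $q$ is completely isometric on $\A_\lambda(\fC)$, so that $\partial C_\lambda^*(\fC)$ is a C*-cover of $\A_\lambda(\fC)$ and there is a canonical quotient map $\varrho\colon\partial C_\lambda^*(\fC)\to C_\env^*(\A_\lambda(\fC))$ restricting to the identity on $\A_\lambda(\fC)$. Concretely this amounts to showing that $p_x\neq0$ in $\partial C_\lambda^*(\fC)$ for every object $x$, equivalently that $Z(x)\cap\partial\Omega\neq\emptyset$, and that the compression of the left regular representation of $C_\lambda^*(\fC)$ to $\ell^2$ of the boundary-path space remains completely isometric on $\A_\lambda(\fC)$; both follow from the existence of at least one boundary path out of every object, which one extracts from cancellativity together with the embedding into $\fG$. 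Granting this, the problem reduces to proving that $\cI:=\ker\varrho$ is zero. By Theorem~\ref{thm:A} the normal coaction on $\A_\lambda(\fC)$ extends uniquely to a normal coaction on $C_\env^*(\A_\lambda(\fC))$; as $\A_\lambda(\fC)$ generates $\partial C_\lambda^*(\fC)$ as a C*-algebra and $\varrho$ fixes $\A_\lambda(\fC)$ pointwise, $\varrho$ intertwines the coactions, so $\cI$ is a coaction-invariant (equivalently, graded) ideal of $\partial C_\lambda^*(\fC)$ with $\cI\cap\A_\lambda(\fC)=0$.

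The heart of the matter is then to show that any coaction-invariant ideal $\cI\trianglelefteq\partial C_\lambda^*(\fC)$ with $\cI\cap\A_\lambda(\fC)=0$ must vanish. Since the coaction is normal, the grading expectation onto the degree-$e$ fibre $C_r^*(\cG_\fC^{\circ}|_{\partial\Omega})$, where $\cG_\fC^{\circ}:=c^{-1}(e)$, is faithful and carries $\cI$ into itself; thus $\cI\neq0$ would give $\cI\cap C_r^*(\cG_\fC^{\circ}|_{\partial\Omega})\neq0$. Now I claim that for $\fC$ groupoid-embeddable the kernel subgroupoid $\cG_\fC^{\circ}$ is principal: a nontrivial degree-$e$ isotropy element at a boundary path would, after cancelling a common right-extension inside $\fG$, force a coincidence $\mu=\nu$ of distinct morphisms of $\fC$. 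Being principal, $\cG_\fC^{\circ}|_{\partial\Omega}$ is effective, so every nonzero ideal of $C_r^*(\cG_\fC^{\circ}|_{\partial\Omega})$ meets $C_0(\partial\Omega)$; hence $\cI\supseteq C_0(U)$ for some nonempty open invariant $U\subseteq\partial\Omega$. Choosing a boundary path $\xi\in U$ and a finite morphism $\mu$ of $\fC$ with $\xi\in Z(\mu)\cap\partial\Omega\subseteq U$, invariance of $U$ under the bisection $v_\mu$ --- which carries $Z(s(\mu))\cap\partial\Omega$ homeomorphically onto $Z(\mu)\cap\partial\Omega$ --- yields $Z(s(\mu))\cap\partial\Omega\subseteq U$, so that the projection $p_{s(\mu)}$, being supported on $Z(s(\mu))\cap\partial\Omega$, lies in $C_0(U)\subseteq\cI$. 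But $p_{s(\mu)}\in\A_\lambda(\fC)$ and $p_{s(\mu)}\neq0$ in $\partial C_\lambda^*(\fC)$ by the previous step, contradicting $\cI\cap\A_\lambda(\fC)=0$. Therefore $\cI=0$, and $\varrho$ is a canonical $*$-isomorphism $\partial C_\lambda^*(\fC)\xrightarrow{\ \sim\ }C_\env^*(\A_\lambda(\fC))$ fixing $\A_\lambda(\fC)$, which is Theorem~\ref{thm:B}.

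The step I expect to be the main obstacle is the structural input just used: that for a groupoid-embeddable category the degree-$e$ core subgroupoid over the boundary, $\cG_\fC^{\circ}|_{\partial\Omega}$, is effective --- this can fail for general left cancellative small categories, and it is exactly here (and in producing boundary paths out of every object) that the hypothesis $\fC\subseteq\fG$, and the two-sided cancellation it provides, enters essentially. The supporting work --- pinning down the groupoid model of $\partial C_\lambda^*(\fC)$, describing $\partial\Omega$ and its cylinder sets, and tracking the action of the homogeneous partial isometries $v_\alpha$ well enough to run the invariance argument above --- is the technical core of the proof, but it involves no deep new idea beyond Theorem~\ref{thm:A} and the groupoid bookkeeping.
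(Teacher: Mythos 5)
Your proposal follows essentially the same route as the paper: first show the boundary quotient is a C*-cover of $\A_\lambda(\fC)$ (the paper's Theorem~\ref{thm:ciAr}), then use Theorem~\ref{thm:main} to make $\ker\pi_\env$ a coaction-invariant ideal detected by the degree-$e$ fibre, and finally use the groupoid embedding to kill the degree-$e$ isotropy over $\partial\Omega$ so that injectivity on the diagonal (Lemma~\ref{lem:piinjondiagonal}) finishes the argument. Two points of comparison. First, your reduction of the C*-cover step to ``$p_x\neq 0$ plus the existence of a boundary path out of every object'' is too weak as stated: non-vanishing of the vertex projections does not give complete isometry of the compression. The paper's proof builds, for each $\chi\in\partial\Omega$, a representation $\vartheta_\chi$ on $\ell^2([\fC,\chi])$ and proves that $\bigoplus_\chi\vartheta_\chi$ is \emph{faithful on the entire Toeplitz C*-algebra} (Lemma~\ref{lem:vartheta}), via a commuting diagram of conditional expectations, an inclusion--exclusion computation, and right cancellation together with maximal characters; complete isometry of $q_\partial$ on $\A_\lambda(\fC)$ then follows because these representations are compressions of the boundary regular representations to invariant subspaces. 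This faithfulness argument is the real content of the first half, and you would need to supply it. Second, where you argue that the degree-$e$ subgroupoid is principal, hence effective, hence satisfies ideal detection by $C_0(\partial\Omega)$, the paper proves the stronger statement that $\bar\rho$ is idempotent pure (Lemma~\ref{lem:idpure}), so that $\ker(\partial\kappa_\rho)$ is \emph{equal to} the unit space and $I_l\ltimes\partial\Omega\cong\cU(\fG)\ltimes\partial\Omega$ is a genuine partial transformation groupoid (Proposition~\ref{prop:idpure}, Corollary~\ref{cor:idpure}); the fixed-point algebra is then exactly $C_0(\partial\Omega)$ and no effectiveness/ideal-detection theorem is needed, only Lemma~\ref{lem:piinjondiagonal}. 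Your closing step (pulling a full cylinder $\partial\Omega(c\fC)$ inside a nonempty invariant open set and translating it back to a vertex projection) is a correct variant of that lemma, but note that basic open subsets of $\partial\Omega$ have the form $\partial\Omega(X;\mff)$ rather than full cylinders, so you still need the cover argument to locate a cylinder $\partial\Omega(c\fC)$ inside one.
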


The proof of this application uses novel \'{e}tale groupoid-theoretic techniques and consists of two parts. 
The first step is to show  that $\partial C_\lambda^*(\fC)$ is a C*-cover of $\A_\lambda(\fC)$ (Theorem~\ref{thm:ciAr}). 
Here, we rely heavily on inverse semigroups and \'{e}tale groupoids underlying the C*-algebras \cite{Exel:comb, ExelPardo, Spi14, Spi20, LiGarI}, and this marks a new interaction between the theory of non-selfadjoint operator algebras and \'{e}tale groupoids. Interestingly, this part requires only the much weaker assumption that the category be cancellative with a Hausdorff groupoid. Our approach also provides new proofs of several established results from \cite{KKLL22a} and \cite{Seh22}. 
We give an example of this by providing a short direct computation of the C*-envelope for finitely aligned higher-rank graphs and $P$-graphs for group-embeddable $P$ (Theorem~\ref{thm:Pgraphs}). 
In the second step, we show injectivity of the canonical map from $\partial C_\lambda^*(\fC)$ onto $C_\env^*(\A_\lambda(\fC))$ that arises from the co-universal property of the C*-envelope. 
This requires Theorem~\ref{thm:A} together with a careful analysis of the \'etale groupoid model underlying $\partial C_\lambda^*(\fC)$. 

As a byproduct of our analysis, we prove that when $\fC$ is a subcategory of a discrete groupoid, then $\partial C_\lambda^*(\fC)$ can be realized as a crossed product C*-algebra for a canonical partial action (Corollary~\ref{cor:idpure}). 
This is a nontrivial generalization of \cite[Proposition~3.10]{Li:IMRN} and is of independent interest.
We believe our approach using \'etale groupoids will have important consequences for future research, e.g., into new example classes of categories that are not groupoid-embeddable where the associated \'etale groupoids are not Hausdorff.

Our final application is to identify the C*-envelopes of the operator algebras of all cancellative right LCM monoids. Importantly, this class contains Dehornoy's gcd-monoids \cite{Doh17I, Doh17I} where there are concrete examples of monoids that are not group-embeddable (see \cite[Proposition~4.3]{DW17}), as well as non group-embeddable Malcev type monoids from \cite{EH+}. Therefore, these classes of monoids are not covered by any previous results, including \cite{Seh22}.

\begin{thm}[Theorem~\ref{thm:lcm-monoids}]
\label{thm:C}
    Let $P$ be a cancellative right LCM monoid. Then, the C*-envelope of $\A_\lambda(P)$ is canonically *-isomorphic to the boundary quotient C*-algebra $\partial C_\lambda^*(P)$.
\end{thm}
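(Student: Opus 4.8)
The plan is to verify, in the case $\fC=P$, the sufficient condition for an affirmative answer to Question~\ref{ques:Li} that comes out of Theorems~\ref{thm:A} and \ref{thm:ciAr}. Since a monoid embeds in a groupoid precisely when it embeds in a group, Theorem~\ref{thm:B} does \emph{not} cover the non-group-embeddable right LCM monoids, such as Dehornoy's gcd-monoids \cite{DW17}, so a genuinely different argument is needed; but it proceeds in the same two steps. Throughout, let $G=G(P)$ be the enveloping group of $P$, let $\iota\colon P\to G$ be the canonical (possibly non-injective) map, let $\{s_p\}_{p\in P}$ be the canonical generating isometries of $C_\lambda^*(P)$, and let $\cG_\partial(P)$ denote the boundary (tight) groupoid of $P$, so that $\partial C_\lambda^*(P)\cong C_r^*(\cG_\partial(P))$.

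\emph{Step 1: $\partial C_\lambda^*(P)$ is a C*-cover of $\A_\lambda(P)$.} By Theorem~\ref{thm:ciAr} it suffices to show that $\cG_\partial(P)$ is Hausdorff. This groupoid is the tight groupoid of the left inverse hull $I_\ell(P)$, whose semilattice of idempotents is the family of constructible right ideals of $P$; since $P$ is right LCM, every such ideal is empty or principal and the intersection of two principal right ideals is empty or principal. This finite-alignment feature makes the standard Hausdorffness criterion for tight groupoids (cf.\ \cite{ExelPardo, Spi20}) straightforward to verify. Theorem~\ref{thm:ciAr} then yields a canonical $*$-epimorphism $\Phi\colon \partial C_\lambda^*(P)\twoheadrightarrow C_\env^*(\A_\lambda(P))$ restricting to a complete isometry on $\A_\lambda(P)$.

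\emph{Step 2: $\Phi$ is injective.} There is a canonical continuous cocycle $c\colon \cG_\partial(P)\to G$ determined by $s_ps_q^*\mapsto \iota(p)\iota(q)^{-1}$; this makes sense even though $\iota$ need not be injective. The resulting $G$-grading of $C_r^*(\cG_\partial(P))$ gives a normal coaction $\delta$ with $\delta(s_p)=s_p\otimes u_{\iota(p)}$ (where $u$ denotes the canonical unitaries in $C^*(G)$), and since the $s_p$ generate $\A_\lambda(P)$ this restricts to a normal coaction on $\A_\lambda(P)$. By Theorem~\ref{thm:A} this restriction extends uniquely to a normal coaction on $C_\env^*(\A_\lambda(P))$; as $\partial C_\lambda^*(P)=C^*(\A_\lambda(P))$ and $\Phi$ is a $*$-homomorphism agreeing with the extension on $\A_\lambda(P)$, the map $\Phi$ is $G$-equivariant. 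Hence $\ker\Phi$ is a $\delta$-invariant ideal. Because $\delta$ is normal, the canonical conditional expectation onto its fixed-point algebra $\cF\cong C_r^*(c^{-1}(e))$ (with $e$ the unit of $G$) is faithful and carries $\ker\Phi$ into itself, so evaluating it on $a^*a$ for $a\in\ker\Phi$ shows that $\ker\Phi\neq 0$ would force $\ker\Phi\cap\cF\neq 0$. The theorem thus reduces to showing that $\Phi$ is injective on $\cF$.

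\emph{The main obstacle.} This last injectivity statement is the technical core, and it is where the right LCM hypothesis is used decisively, together with the étale-groupoid analysis developed for Theorem~\ref{thm:B}. The plan is to study the kernel groupoid $c^{-1}(e)\subseteq\cG_\partial(P)$, using the right LCM structure to control its ideals in terms of the diagonal $\cD=C_0\big(\cG_\partial^{(0)}(P)\big)$, and then to show that $\Phi$ cannot collapse a nonzero ideal of $\cD$: the point is that $\cG_\partial^{(0)}(P)$ is the \emph{minimal} (boundary) unit space compatible with the completely isometric copy of $\A_\lambda(P)$ and the range projections $s_ps_p^*$ generating $\cD$ inside it, i.e.\ that the tight spectrum of $P$ already realizes the Hamana/Shilov boundary relevant to $\A_\lambda(P)$. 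Ruling out any further coaction-compatible quotient of $\cD$ is the step I expect to be genuinely delicate. Once it is in hand, $\Phi|_\cF$ is injective, so $\Phi$ is an isomorphism; and since $\Phi$ arises from the co-universal property of the C*-envelope, the resulting identification $C_\env^*(\A_\lambda(P))\cong\partial C_\lambda^*(P)$ is canonical, as claimed.
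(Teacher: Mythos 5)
Your Step 1 is correct and matches the paper: right LCM implies finite alignment, hence $I_l\ltimes\Omega$ is Hausdorff (the paper cites \cite[Corollary~4.2]{LiGarI}), and Theorem~\ref{thm:ciAr} plus the co-universal property give the surjection $\Phi=\pi_\env$ that is completely isometric on $\A_\lambda(P)$. Your reduction in Step 2 --- that equivariance of $\Phi$ and faithfulness of the conditional expectation force any nonzero $\ker\Phi$ to meet the fixed-point algebra $\cF\cong C_r^*(c^{-1}(e))$ --- is also a correct general principle and appears in the paper as the unnumbered proposition following Corollary~\ref{cor:coactionEnvA(C)}.

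The genuine gap is exactly the step you flag as the ``main obstacle'': you never prove that $\Phi$ is injective on $\cF$, and the route you sketch is unlikely to work as stated. With your choice of coaction, coming from the (non-injective) map $\iota\colon P\to G(P)$ into the enveloping group, the kernel groupoid $c^{-1}(e)$ contains all elements $[pq^{-1},\chi]$ with $\iota(p)=\iota(q)$ but $p\neq q$; for non-group-embeddable $P$ these exist and can produce interior isotropy, so there is no reason the diagonal $\cD=C_0(\partial\Omega)$ should detect ideals in $C_r^*(c^{-1}(e))$, and Lemma~\ref{lem:piinjondiagonal} alone does not close the argument. The paper's decisive ingredient, which your proposal is missing, is Starling's uniqueness theorem for right LCM monoids (\cite[Theorem~4.1]{St22}, \cite[Theorem~2.1]{BS24}): a representation of $C_r^*(I_l\ltimes\partial\Omega)$ is injective if and only if it is injective on the C*-subalgebra generated by the \emph{core} submonoid $P_0=\{c\in P : cP\cap dP\neq\emptyset\ \text{for all } d\in P\}$. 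Since $P_0$ is right Ore, it embeds in its universal group $\cU(P_0)$, the core boundary subalgebra is canonically $C_r^*(\cU(P_0))$, and the fixed-point algebra of the resulting normal coaction is just $\Cz 1_{\partial\Omega}$, on which $\pi_\env$ is trivially injective; the conditional-expectation argument (now applied to the $\cU(P_0)$-coaction transported to $\A_{r,0}(P)$ and extended to its C*-envelope via Theorem~\ref{thm:A}) then finishes the proof. In short: you chose the wrong group to coact by, and the resulting fixed-point algebra is too large to handle directly; replacing the enveloping group of $P$ by the universal group of the core submonoid, via Starling's theorem, is what makes the argument close.
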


\textbf{Acknowledgements.}
We thank Xin Li for discussions on non-abelian duality that led to simplified proofs, and for bringing the reference \cite{DW17} to our attention. We are also grateful to Cédric Arhancet for pointing out to us a small gap in the proof of Proposition~\ref{prop:auto-coaction} in an earlier version of this paper, and we thank the anonymous referees for several helpful suggestions. The third-named author acknowledges discussions held at the March 2020 FRG 248 BIRS meeting in Banff.

\section{Preliminaries}

\subsection{Coactions on operator algebras}
\label{ss:coactions}

We first discuss operator algebras and their C*-envelopes. 
The reader is referred to \cite{Arv69,Arv72,Arv08,Arv11} for the unital theory, and to \cite[\S~2.2]{DS18} for additional details on the general theory.

By an operator algebra $\A$, we mean a norm-closed subalgebra of the bounded operators $\Bz(\cH)$ on a Hilbert space $\cH$.
A \emph{C*-cover} of an operator algebra $\A$ is a pair $(\iota,\cB)$, where $\cB$ is a C*-algebra and $\iota\colon \A\to \cB$ is a completely isometric homomorphism such that $\cB=C^*(\iota(\A))$. 
The \emph{C*-envelope} of $\A$ is a C*-cover $(\kappa,C^*_\env(\A))$ of $\A$ with the \emph{co-universal property} that for every other C*-cover $(\iota,\cB)$ of $\A$, there is a surjective *-homomorphism $q_e\colon C^*(\cB) \rightarrow C^*_\env(\A)$ with $q_e \circ \iota = \kappa$ on $\A$. By this co-universal property, the C*-algebra $C^*_\env(\A)$ is unique up to canonical *-isomorphism.

Throughout this paper $\otimes$ denotes the minimal (spatial) tensor product of operator algebras, $G$ will denote a discrete group, and $u_g\in C^*(G)$ will denote the canonical unitary corresponding to $g\in G$. 
Let $\lambda\colon C^*(G)\to C_\lambda^*(G)$ denote the canonical quotient map of $C^*(G)$ onto the reduced group C*-algebra $C^*_{\lambda}(G)$. We also use $\lambda$ to denote the regular representation of $G$ in $\cU(\ell^2(G))$, so that $\lambda_g=\lambda(u_g)$, for all $g\in G$. 
We let 
\[
\Delta\colon C^*(G)\to C^*(G)\otimes C^*(G), \quad u_g\mapsto u_g\otimes u_g
\]
and 
\[
\Delta_\lambda\colon C_\lambda^*(G)\to C_\lambda^*(G)\otimes C_\lambda^*(G), \quad \lambda_g\mapsto \lambda_g\otimes \lambda_g
\]
denote the comultiplications on $C^*(G)$ and $C^*_{\lambda}(G)$, respectively, where the latter exists by Fell's absorption principle.

\begin{definition}
	\label{def:coaction}
    A \emph{coaction of a discrete group $G$ on an operator algebra $\A$} is a completely contractive homomorphism $\delta\colon \A\to\A\otimes C^*(G)$ such that
   \begin{enumerate}[\upshape(i)]
        \item  
        $(\delta \otimes \id_{C^*(G)}) \circ \delta = (\id_{\A} \otimes \Delta) \circ \delta$ (coaction identity);
       \item $\overline{\delta(\A)(I_{\A} \otimes C^*(G))} = \A \otimes C^*(G)$ (nondegeneracy).
   \end{enumerate}
   The coaction $\delta$ is \emph{normal} if the map $(\id_\A\otimes\lambda)\circ \delta\colon \A\to\A\otimes C_\lambda^*(G)$ is completely isometric.
   We write $\delta\colon G\downtouparrow \A$ to denote a coaction of $G$ on $\A$. 
\end{definition}

\begin{remark} \label{rem:auto-C*-coaction}
When $\A$ is a C*-algebra, the map $\delta$ in Definition~\ref{def:coaction} is automatically *-preserving, and is therefore a *-homomorphism. Indeed, if $\A$ is unital we do nothing, and if $\A$ is nonunital, then by \cite[\S~3]{Meyer}, $\delta$ extends to a unital completely contractive homomorphism on its unitization. Either way, a unital complete contraction is automatically positive by \cite[Proposition 2.12]{PauBook}, and therefore preserves adjoints. Thus, when $\A$ is a C*-algebra, we get the original definition of a discrete group coaction on a C*-algebra in the literature, cf. \cite[Definition~A.21]{EKQR}.
\end{remark}

Whenever we have a coaction $\delta\colon G \coacts \A$, we may define analogues of Fourier coefficients $\Ez_g\colon\A \rightarrow \A \otimes \Cz u_g$ by setting $\Ez_g(a) = (\id_{\A} \otimes \Phi_g)\circ \delta(a)$, where $\Phi_g\colon C^*(G) \rightarrow \Cz u_g$ is the standard $g$-th Fourier coefficient map on the full group C*-algebra. 
By the coaction identity, it follows that $\Ez_g(a) (I \otimes u_{g^{-1}}) \in \A \otimes \Cz I$, identified as an element $b$ in $\A$, satisfies $\delta(b) = b\otimes u_g$. We then have the following result, which shows that \cite[Definition~3.1]{DKKLL} coincides with Definition~\ref{def:coaction}.

\begin{proposition} \label{prop:auto-coaction}
Let $\A$ be an operator algebra, let $G$ be a discrete group, and suppose $\delta\colon\A \rightarrow \A \otimes C^*(G)$ is a completely contractive homomorphism. Then, $\delta$ is a coaction on $\A$ if and only if
$\sum_g\A_g^\delta$ is norm-dense in $\A$, where $\A_g^\delta$ is the spectral subspace given by $ \A_g^\delta \coloneq \{a\in \A : \delta(a)=a\otimes u_g\}$ for $g\in G$.
\end{proposition}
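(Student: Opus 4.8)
The plan is to establish both implications; the forward one, from ``$\delta$ a coaction'' to ``$\sum_g \A_g^\delta$ norm-dense'', is the substantive direction.

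For the reverse implication, assume $\sum_g \A_g^\delta$ is norm-dense and verify conditions (i) and (ii) of Definition~\ref{def:coaction} on this dense set. For the coaction identity, both $(\delta\otimes\id_{C^*(G)})\circ\delta$ and $(\id_\A\otimes\Delta)\circ\delta$ are completely contractive maps $\A\to\A\otimes C^*(G)\otimes C^*(G)$ sending $a\in\A_g^\delta$ to $a\otimes u_g\otimes u_g$ (using $\delta(a)=a\otimes u_g$ and $\Delta(u_g)=u_g\otimes u_g$), so they agree on $\sum_g\A_g^\delta$ and hence, by continuity, on $\A$. For nondegeneracy, $\delta(a)(I_\A\otimes u_k)=a\otimes u_{gk}$ for $a\in\A_g^\delta$, and as $k$ ranges over $G$ so does $gk$; therefore $\overline{\delta(\A)(I_\A\otimes C^*(G))}$ contains the algebraic span of $\{b\otimes u_m:b\in\bigcup_g\A_g^\delta,\ m\in G\}$, which is dense in $\A\otimes C^*(G)$ because $\sum_g\A_g^\delta$ is dense in $\A$ and $\spn\{u_m:m\in G\}$ is dense in $C^*(G)$.

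For the forward implication, assume $\delta$ is a coaction. I would work with the Fourier-coefficient maps $\Ez_h=(\id_\A\otimes\Phi_h)\circ\delta\colon\A\to\A\otimes\Cz u_h$ and the fact, recorded before the statement, that $b_h(a):=\Ez_h(a)(I_\A\otimes u_{h^{-1}})\in\A_h^\delta$ and $\Ez_h(a)=b_h(a)\otimes u_h$ (here the coaction identity enters). The key idea is that a single Fourier coefficient applied to the nondegeneracy relation linearizes it. Indeed, by nondegeneracy and density of $\spn\{u_k\}$ in $C^*(G)$, the closed linear span of $\{\delta(a)(I_\A\otimes u_k):a\in\A,\ k\in G\}$ equals $\A\otimes C^*(G)$. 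Fix $g\in G$ and apply the completely contractive slice map $\id_\A\otimes\Phi_g$, which is surjective onto $\A\otimes\Cz u_g$. Using $\Phi_g(xu_k)=\Phi_{gk^{-1}}(x)u_k$ (verified on $\spn\{u_h\}$) together with $\Ez_{gk^{-1}}(a)=b_{gk^{-1}}(a)\otimes u_{gk^{-1}}$, one finds that $(\id_\A\otimes\Phi_g)\bigl(\delta(a)(I_\A\otimes u_k)\bigr)$ equals $\Ez_{gk^{-1}}(a)(I_\A\otimes u_k)=b_{gk^{-1}}(a)\otimes u_g$, an element of $\A_{gk^{-1}}^\delta\otimes\Cz u_g$. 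By continuity, $\A\otimes\Cz u_g=(\id_\A\otimes\Phi_g)(\A\otimes C^*(G))$ is contained in the closed span of these elements, hence in $\overline{\sum_h\A_h^\delta}\otimes\Cz u_g$; since $\,\cdot\otimes u_g$ is isometric, $\A=\overline{\sum_h\A_h^\delta}$.

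The point to get right is to \emph{not} try to reconstruct $\delta(a)$ from a Fourier series: for a general discrete group the partial sums $\sum_{h\in F}\Ez_h(a)$ need not converge in norm (already for $G=\Zz$ they are Dirichlet partial sums). The argument above sidesteps this, because nondegeneracy already presents $\A\otimes C^*(G)$ as the closed span of the specific generators $\delta(a)(I_\A\otimes u_k)$, and one slice map sends each such generator into a spectral subspace. The remaining work is bookkeeping: checking $\Phi_g(xu_k)=\Phi_{gk^{-1}}(x)u_k$, and observing that slicing by a bounded functional respects the minimal tensor norm, so that the closure manipulations above are legitimate.
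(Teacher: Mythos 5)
Your proof is correct and follows essentially the same route as the paper's: the reverse direction checks both coaction axioms on the spectral subspaces, and the forward direction uses nondegeneracy to present $\A\otimes C^*(G)$ via the generators $\delta(a)(I\otimes u_k)$ and then applies a single Fourier-coefficient slice map, which sends each generator into a spectral subspace tensored with a rank-one slot. The only cosmetic difference is that the paper fixes $a\in\A$, approximates $a\otimes I$, and slices with $\Phi_e$, whereas you slice the whole algebra with $\Phi_g$ and use surjectivity onto $\A\otimes\Cz u_g$; the substance is identical.
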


\begin{proof}
Suppose first that $\sum_g\A_g^\delta$ is dense in $\A$. 
It is easy to verify the coaction identity on elements of each spectral subspace, so it holds on all of $\A$. 
Next, we show that $\delta$ is nondegenerate. Clearly we have the containment $\overline{\delta(\A)(I_\cH\otimes C^*(G))} \subseteq \A\otimes C^*(G)$, so we need only prove the converse. For $g\in G$ and $a\in \A_g^\delta$, we have $\delta(a)=a\otimes u_g\in \delta(\A)$, and since $I_\cH\otimes u_{g^{-1}}\in I_\cH\otimes C^*(G)$, we have that $a\otimes I = (a \otimes u_g)(I_{\cH} \otimes u_{g^{-1}}) \in  \delta(\A)(I_\cH\otimes C^*(G))$. From density of $\sum_g\A_g^\delta$ in $\A$, we get that $\A\otimes \Cz I \subseteq \overline{\delta(\A)(I_\cH\otimes C^*(G))}$, so that $\overline{\delta(\A)(I_\cH\otimes C^*(G))} = \A \otimes C^*(G)$.

Suppose now that $\delta$ is a coaction in the sense of Definition~\ref{def:coaction}. We will show that $\sum_g \A_g$ is dense in $\A$. Let $a\in \A$ be some element, and $\epsilon >0$. Then, by nondegeneracy there are elements $a_g \in \A$, finitely many of which are nonzero, such that
\begin{equation}
    \label{eqn:estimate}
\| a \otimes I - \sum_g \delta(a_g)(I_{\cH} \otimes u_{g^{-1}}) \| < \epsilon.
\end{equation}
The formula $\Phi_g(x)=\Phi_e(xu_{g^{-1}})$ for all $x\in C^*(G)$ promotes to the following: for all $T\in \A\otimes C^*(G)$, we have 
\[
(\id_\A\otimes\Phi_g)(T)=(\id_\A\otimes\Phi_e)(T(I_H\otimes u_{g^{-1}})).
\]
This is checked first on elementary tensors, which then yields the general formula by continuity. Using this, we have 
\[
(\id_A\otimes\Phi_e)(\delta(a_g)(I_H\otimes u_{g^{-1}}))=(\id_A\otimes\Phi_g)(\delta(a_g))=\Ez_g(a_g).
\]
Since $\id_{\A} \otimes \Phi_e$ is contractive, using Equation~\eqref{eqn:estimate}, we get that
\[
 \|a \otimes I - \sum_g \Ez_g(a_g) \| < \epsilon.
\]
Since $\Ez_g(a_g)\in \A_g$, we get that $a$ can be approximated arbitrarily well by an element in $\sum_g \A_g$.
\end{proof}

\begin{remark}
    It follows from Proposition~\ref{prop:auto-coaction} that any coaction $\delta$ is automatically completely isometric. Indeed, if we let $1\colon G \rightarrow \Cz$ be the trivial representation, and we continue to write $1 \colon C^*(G) \rightarrow \Cz$ for the incuded representation on $C^*(G)$, then we have $(\id_{\A} \otimes 1) \circ \delta = \id_{\A}$, as can be verified on spectral subspaces.
\end{remark}

Let $\A$ be an operator algebra, let $G$ be a discrete group, and suppose $\delta\colon\A \rightarrow \A \otimes C^*(G)$ is a normal coaction. Several times throughout this paper we shall use the fact that, in this case, the conditional expectation $\Ez_e\colon \A\to\A$ is faithful. Indeed, we have $\Ez_e=(\id_\A\otimes\tau)(\id_\A\otimes\lambda)\circ\delta$, where $\tau$ is the canonical faithful tracial state on $C_\lambda^*(G)$. Since $\tau$ is faithful, and $(\id_\A\otimes\lambda)\circ\delta$ is completely isometric by assumption, $\Ez_e$ is faithful.
   
\begin{definition}
\label{def:reducedcoaction}
    A \emph{reduced coaction of a discrete group $G$ on an operator algebra $\A$} is a completely isometric homomorphism $\eps\colon \A\to \A\otimes C_\lambda^*(G)$ such that
   \begin{enumerate}[\upshape(i)]
        \item  
        $(\eps \otimes \id_{C^*_{\lambda}(G)}) \circ \eps = (\id_{\A} \otimes \Delta_\lambda) \circ \eps$ (coaction identity);
       \item $\overline{\eps(\A)(I_{\A} \otimes C_{\lambda}^*(G))} = \A \otimes C_{\lambda}^*(G)$ (nondegeneracy).
   \end{enumerate}
\end{definition}

\begin{remark}
As in Remark~\ref{rem:auto-C*-coaction}, when $\A$ is a C*-algebra, a reduced coaction is automatically *-preserving. Therefore, Definition~\ref{def:reducedcoaction} coincides with the notion of a reduced coaction on a C*-algebra in the literature, cf. \cite[Definition~A.72]{EKQR}.
\end{remark}

If $\delta\colon G\downtouparrow \A$ is a normal coaction, then $\delta_\lambda\coloneq(\id_\A\otimes\lambda)\circ \delta\colon \A\to\A\otimes C_\lambda^*(G)$ is a reduced coaction, as it automatically satisfies (i) and (ii) from Definition~\ref{def:reducedcoaction}. 
On the other hand, if $\eps\colon G \coacts \A$ is a reduced coaction, a similar proof as the one of Proposition \ref{prop:auto-coaction} shows that $\sum_g\A_g^\eps$ is norm-dense in $\A$, where $\A_g^\eps$ are the spectral subspaces given by
   \[
   \A_g^\eps \coloneq \{a\in \A : \eps(a)=a\otimes \lambda_g\},
   \]
for $g\in G$.
If $\delta\colon G\coacts \A$ and $\delta'\colon G\coacts \A'$ are coactions on operator algebras $\A$ and $\A'$, respectively, then a map $\phi\colon \A\to\A'$ is said to be $\delta-\delta'$-equivariant if $\delta'\circ\phi=(\phi\otimes \id_{C^*(G)})\circ\delta$. Equivariance of a map with respect to reduced coactions is defined similarly.

For the proof of Theorem~\ref{thm:main}, we need a slight strengthening of \cite[Proposition~3.4]{DKKLL} which uses Fell's absorption principle.
See \cite[\S~5.5]{CD23} and \cite{Katsoulis23} for versions of Fell's absorption principle for monoid operator algebras.
\begin{proposition}
\label{prop:5authors}
   Let $\A$ be an operator algebra, and suppose $\eps\colon G\coacts \A$ is a reduced coaction. Then, there exists a unique normal coaction $\delta\colon G\coacts \A$ such that $\delta_\lambda\coloneq(\id_\A\otimes\lambda)\circ\delta$ is equal to $\eps$. Moreover, $\A_g^\delta=\A_g^\eps$ for all $g\in G$.
   Therefore, if $\eps'\colon G\coacts\A'$ is another reduced coaction, and $\delta'\colon G\coacts \A'$ is the normal coaction satisfying $\delta'_\lambda=\eps'$, then a map $\phi\colon \A\to\A'$ is $\eps-\eps'$-equivariant if and only if it is $\delta-\delta'$-equivariant. 
\end{proposition}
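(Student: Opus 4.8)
The plan is to construct $\delta$ directly on the canonical dense subalgebra of $\A$ and to use Fell's absorption principle to see that it is completely contractive into the \emph{full} tensor product $\A\otimes C^*(G)$; the coaction axioms will then follow from Proposition~\ref{prop:auto-coaction}. As remarked after Definition~\ref{def:reducedcoaction}, $\sum_g\A_g^\eps$ is norm-dense in $\A$, and the spectral subspaces $\A_g^\eps$ are linearly independent: if $\sum_g a_g=0$ with $a_g\in\A_g^\eps$, then $\sum_g a_g\otimes\lambda_g=\eps\big(\sum_g a_g\big)=0$, and evaluating this operator on vectors of the form $\xi\otimes\delta_e$ and using orthonormality of $\{\delta_g\}\subseteq\ell^2(G)$ forces each $a_g=0$. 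I therefore define an algebra homomorphism $\delta_0\colon\bigoplus_g\A_g^\eps\to\A\otimes C^*(G)$ by $\delta_0\big(\sum_g a_g\big)=\sum_g a_g\otimes u_g$; it is multiplicative because $\eps(ab)=ab\otimes\lambda_{gh}$ whenever $a\in\A_g^\eps$ and $b\in\A_h^\eps$, so that $ab\in\A_{gh}^\eps$.

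The heart of the matter is to show that $\delta_0$ is completely isometric; I expect this to be the only genuinely delicate step. Fix $a=\sum_g a_g\in\bigoplus_g\A_g^\eps$ (matrix amplifications are handled by the identical computation). Since the minimal tensor norm is a supremum over representations, $\|\delta_0(a)\|=\sup_\rho\big\|\sum_g a_g\otimes\rho(g)\big\|$, the supremum taken over unitary representations $\rho$ of $G$ on Hilbert spaces $\cH_\rho$. The point is to compute this norm through the completely isometric embedding $\eps$ rather than through a given representation of $\A$: since the spatial tensor norm is independent of the chosen completely isometric representation of $\A$, we may represent $\A$ via $\eps$ inside $\Bz(\cH\otimes\ell^2(G))$, and then
\[
\Big\|\sum_g a_g\otimes\rho(g)\Big\|=\Big\|\sum_g\eps(a_g)\otimes\rho(g)\Big\|=\Big\|\sum_g a_g\otimes\lambda_g\otimes\rho(g)\Big\|.
\]
By Fell's absorption principle there is a unitary $W$ on $\ell^2(G)\otimes\cH_\rho$ with $W(\lambda_g\otimes 1)W^*=\lambda_g\otimes\rho(g)$ for all $g$, so conjugating by $1\otimes W$ the right-hand side equals $\big\|\sum_g a_g\otimes\lambda_g\otimes 1\big\|=\|\eps(a)\|=\|a\|$; taking the supremum over $\rho$ gives $\|\delta_0(a)\|=\|a\|$. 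Thus $\delta_0$ extends to a completely isometric homomorphism $\delta\colon\A\to\A\otimes C^*(G)$. It is tempting, but wrong, to estimate $\|\sum_g a_g\otimes\rho(g)\|$ by $\|a\|$ using a fixed representation of $\A$ directly; that would force $\|\cdot\|_{C^*(G)}\le\|\cdot\|_{C_\lambda^*(G)}$, i.e.\ amenability of $G$. The reduced-coaction structure is encoded precisely in $\eps$, and only after transporting the computation along $\eps$ does Fell's absorption collapse the factor $\lambda_g\otimes\rho(g)$ to $\lambda_g\otimes 1$.

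It remains to verify the stated properties. On $\bigoplus_g\A_g^\eps$ one has $(\id_\A\otimes\lambda)\circ\delta_0=\eps$, so by continuity $\delta_\lambda\coloneq(\id_\A\otimes\lambda)\circ\delta=\eps$; in particular $\A_g^\eps\subseteq\A_g^\delta$, whence $\sum_g\A_g^\delta$ is dense and Proposition~\ref{prop:auto-coaction} shows $\delta$ is a coaction, which is normal because $\eps$ is completely isometric. Applying $\id_\A\otimes\lambda$ to the identity $\delta(a)=a\otimes u_g$ gives the reverse inclusion, so $\A_g^\delta=\A_g^\eps$. For uniqueness, if $\delta'$ is a normal coaction of $G$ on $\A$ with $\delta'_\lambda=\eps$, then for $a\in\A_g^{\delta'}$ we get $\eps(a)=\delta'_\lambda(a)=a\otimes\lambda_g$, so $a\in\A_g^\eps$ and $\delta'(a)=a\otimes u_g=\delta(a)$; since $\sum_g\A_g^{\delta'}$ is norm-dense (Proposition~\ref{prop:auto-coaction}) and $\delta,\delta'$ are continuous, $\delta'=\delta$. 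Finally, let $\eps'$ be another reduced coaction of $G$ on an operator algebra $\A'$ with associated normal coaction $\delta'$. If $\phi\colon\A\to\A'$ is $\eps-\eps'$-equivariant, then for $a\in\A_g^\eps$ we have $\eps'(\phi(a))=(\phi\otimes\id_{C_\lambda^*(G)})(\eps(a))=\phi(a)\otimes\lambda_g$, so $\phi(a)\in(\A')_g^{\eps'}=(\A')_g^{\delta'}$ and hence $\delta'(\phi(a))=\phi(a)\otimes u_g=(\phi\otimes\id_{C^*(G)})(\delta(a))$; by density of $\sum_g\A_g^\eps$ this gives $\delta'\circ\phi=(\phi\otimes\id_{C^*(G)})\circ\delta$. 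Conversely, if $\phi$ is $\delta-\delta'$-equivariant, applying $\id_{\A'}\otimes\lambda$ to this identity and using $\delta_\lambda=\eps$, $\delta'_\lambda=\eps'$ recovers $\eps'\circ\phi=(\phi\otimes\id_{C_\lambda^*(G)})\circ\eps$.
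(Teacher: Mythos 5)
Your proof is correct and rests on the same key mechanism as the paper's, namely Fell's absorption principle: you construct the lift $\delta$ explicitly on the spectral subspaces $\A_g^\eps$ and verify complete isometry by hand via the supremum over representations $\rho$ of $G$, whereas the paper outsources existence to the proof of \cite[Proposition~3.4]{DKKLL}, which packages the very same absorption argument into the embedding $\lambda_g\mapsto\lambda_g\otimes u_g$ and the formula $\delta=(\delta_\lambda\otimes\id)^{-1}\circ(\id_\A\otimes\phi)\circ\delta_\lambda$. Your uniqueness argument (density of $\sum_g\A_g^{\delta'}$ for the competing coaction $\delta'$) is a minor and equally valid variant of the paper's (which instead inverts $\id_\A\otimes\lambda$ on $\delta(\A)$ using normality), and the equivariance part coincides with the paper's.
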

\begin{proof}
Everything in the first claim except for uniqueness follows from the proof of \cite[Proposition~3.4]{DKKLL}. To prove uniqueness, suppose $\delta$ and $\bar{\delta}$ are normal coactions of $G$ on $\A$ such that $\delta_\lambda=\bar{\delta}_\lambda$. Then, for all $g\in G$ and $a\in \A_g^\eps$, we have
\[
(\id_\A\otimes\lambda)\circ \delta(a)=a\otimes\lambda_g=(\id_\A\otimes\lambda)\circ \bar{\delta}(a).
\]
By normality of $\delta$, $\id_\A\otimes\lambda$ is completely isometric on $\delta(\A)$, so that $\delta(a)=(\id_\A\otimes\lambda)\vert_{\delta(\A)}^{-1}(a\otimes\lambda_g) = a \otimes u_g$. By symmetry, $\bar{\delta}(a)=(\id_\A\otimes\lambda)\vert_{\delta(\A)}^{-1}(a\otimes\lambda_g) = a \otimes u_g$. Since $\sum_g\A_g^\eps$ is dense in $\A$, this is enough to conclude that $\delta=\bar{\delta}$.

For the second claim, observe that the lift $\delta$ of $\eps$ is explicitly given as $\delta=(\delta_\lambda\otimes\id_{C^*(G)})^{-1}\circ (\id_\A\otimes\phi)\circ \delta_\lambda$, where $\phi$ is the injective *-homomorphism
\[
\phi\colon C_\lambda^*(G)\to C_\lambda^*(G)\otimes C^*(G),\quad \lambda_g\mapsto \lambda_g\otimes u_g
\]
from Fell's absorption principle (see the proof of \cite[Proposition~3.4]{DKKLL}). 
From this, it follows that $\A_g^\eps=\A_g^\delta$ for all $g\in G$. The analogous statements hold for $\delta'$ and $\eps'$, so that $\A'^{\eps'}_g=\A'^{\delta'}_g$ for all $g\in G$. Now, if $\phi$ is $\eps-\eps'$-equivariant, then for any $g\in G$ and $a\in \A_g^\eps=\A_g^\delta$, we have $\phi(a)\in \A'^{\eps'}_g$, so that $\delta'(\phi(a))=\phi(a)\otimes u_g=\phi\otimes\id_{C^*(G)}(\delta(a))$.
The other implication is similar.
\end{proof}

\subsection{Operator algebras from categories} \label{ss:prelimcats}
In this subsection, we recall how to describe the Toeplitz C*-algebra of a left cancellative small category using the language of inverse semigroups and \'etale groupoids. 
In the present setting, these ideas originated in \cite{Spi14,Spi20}, but our treatment mostly follows \cite{LiGarI}. 
We refer the reader to \cite{Law98} for background on inverse semigroups and to \cite[\S~II]{SSW20} for background on \'etale groupoid C*-algebras (see also \cite{Renault}).

Let $\fC$ be a small category with set of objects $\fC^0$, each of which can be identified with the identity morphism on it. We denote by $\mfd, \mft\colon \fC \to \fC^0$ the domain and target maps, respectively. Given $c,d\in\fC$, the composition $cd$ is defined if $\mfd(c)=\mft(d)$. We identify $\fC$ with its set of morphisms, so that the composition of morphisms gives a partially defined binary relation on $\fC$. Note that if $\fC^0$ is a singleton, then $\fC$ is simply a monoid. Thus, we view left cancellative small categories as generalizations of monoids. A small category $\fC$ is left cancellative if for every $c,x,y\in\fC$ with $\mfd(c)=\mft(x)=\mft(y)$, the equality $cx=cy$ implies $x=y$. We will say that $\fC$ is right cancellative if the symmetric definition holds, and that $\fC$ is cancellative if it is both left and right cancellative.

When $\fC$ is a left cancellative small category, there is a left regular representation $\lambda_\fC\colon\fC\to \Bz(\ell^2(\fC))$ given by
\[
\lambda_\fC(c)e_x = 
\begin{cases}
    e_{cx} & \text{if}~\mft(x) = \mfd(c), \\
    0 & \text{otherwise},
\end{cases}
\]
for all $c,x\in \fC$, where $\{e_x : x\in\fC\}$ is the canonical orthonormal basis for $\ell^2(\fC)$. We see that for $c\in \fC$, the operator $\lambda_\fC(c)$ is a partial isometry with initial space $\ell^2(\mfd(c)\fC)$ and final space $\ell^2(c\fC)$. The following C*-algebra was defined by Spielberg \cite{Spi20}.

\begin{definition}[{\cite[Definition~11.2]{Spi20}}]
\label{def:Toeplitz}
    The \emph{reduced Toeplitz algebra of $\fC$} is the C*-algebra 
    \[
    C_\lambda^*(\fC)\coloneq C^*(\{\lambda_\fC(c) : c\in\fC\})\subseteq\Bz(\ell^2(\fC)).
    \]   
\end{definition}

The C*-algebra $C_\lambda^*(\fC)$ is called the left reduced C*-algebra of $\fC$ in \cite[Definition~2.2]{LiGarI}. 

\begin{definition}
\label{def:A(fC)}
The \emph{operator algebra of $\fC$} is 
    \[
\A_\lambda(\fC)\coloneq\overline{\alg}(\{\lambda_\fC(c) :c\in\fC\})\subseteq  C_\lambda^*(\fC).
\]
\end{definition}

The operator algebra $\A_\lambda(\fC)$ always has a contractive self-adjoint approximate identity. Indeed, the net of projections $\sum_{u\in F}\lambda_u$ as $F$ ranges over the finite subsets of $\fC^0$ is a contractive self-adjoint approximate identity of projections for the dense algebra $\alg(\{\lambda_\fC(c): c\in\fC\})$, so it is also a contractive self-adjoint approximate identity for $\A_\lambda(\fC)$. Moreover, $\A_\lambda(\fC)$ is unital whenever $\fC^0$ is finite.

A priori, we have the following description of $C_\lambda^*(\fC)$ with *-monomials:
\begin{equation}
\label{eqn:spanning1}
C_\lambda^*(\fC)=\overline{\spn}(\{\lambda_\fC(c_1)^*\lambda_\fC(d_1)\cdots \lambda_\fC(c_n)^*\lambda_\fC(d_n) : c_i,d_i\in\fC,n\geq 1\}).
\end{equation}
The language of inverse semigroups is then very convenient for understanding how *-monomials interact. 

Recall that a \emph{partial bijection of $\fC$} is a bijection between two subsets of $\fC$, i.e., a bijection $f\colon \dom(f)\to\im(f)$, where $\dom(f)$ and $\im(f)$ are subsets of $\fC$ called the \emph{domain} and $\emph{image}$ of $f$, respectively. 
The symmetric inverse monoid $\cI(\fC)$ consists of all partial bijections of $\fC$ with composition and inversion of partial bijections.
Due to left cancellation in $\fC$, each $c\in\fC$ defines a partial bijection
\begin{equation}
\label{eqn:pbi}
\mfd(c)\fC\to c\fC, \quad x\mapsto cx.
\end{equation}

Following \cite{LiGarI}, we shall use $c$ also to denote the partial bijection defined in \eqref{eqn:pbi}, so that $\dom(c)=\mfd(c)\fC$, $\im(c)=c\fC$, and $c(x)\coloneq cx$ for all $x\in \mfd(c)\fC$. The inverse $c^{-1}$ is the partial bijection of $\fC$ given by $c\fC\to \mfd(c)\fC$, $c^{-1}(cx) \coloneq x$ for all $cx\in c\fC$, though it may not make sense as an element of $\fC$. 

There is a canonical faithful representation of the inverse monoid $\cI(\fC)$ on $\ell^2(\fC)$ by partial isometries defined as follows: 
given $f\in \cI(\fC)$, there is a partial isometry $\Lambda_f\in\Bz(\ell^2(\fC))$ given by 
\begin{equation}
    \Lambda_fe_x\coloneq \begin{cases}
        e_{f(x)} & \text{if}~x\in\dom(f),\\
        0 & \text{if}~x\not\in\dom(f),
    \end{cases}
\end{equation}
for all $x\in \fC$,
and the map $f\mapsto \Lambda_f$ is injective and a homomorphism of inverse monoids in the sense that $\Lambda_{fg}=\Lambda_f\Lambda_g$ and $\Lambda_f^*=\Lambda_{f^{-1}}$ for all partial bijections $f,g\in \cI(\fC)$. In the specific case where the partial bijections come from elements of $\fC$, we have:
\begin{itemize}
    \item $\lambda_\fC(c)=\Lambda_c$ and $\lambda_\fC(c)^*=\Lambda_{c^{-1}}$ for all $c\in\fC$;
    \item $\Lambda_c^*\Lambda_d=\Lambda_{c^{-1}d}$ for all $c,d\in\fC$, where $c^{-1}d$ is the composition of $c^{-1}$ and $d$ in $\cI(\fC)$.
\end{itemize} 

This allows us to describe a general *-monomial as a partial isometry. Specifically, we have
\begin{equation}
    \label{eqn:spanning2}
\lambda_\fC(c_1)^*\lambda_\fC(d_1)\cdots \lambda_\fC(c_n)^*\lambda_\fC(d_n)=\Lambda_{c_1^{-1}d_1\cdots c_n^{-1}d_n},
\end{equation}
for all $c_i,d_i\in\fC$ and $n\geq 1$. 
The \emph{left inverse hull $I_l = I_l(\fC)$ of $\fC$} is the inverse semigroup generated by the collection $\{c : c\in\fC\}$ of partial bijections from \eqref{eqn:pbi}, and we have
\[
I_l=\{c_1^{-1}d_1\cdots c_n^{-1}d_n : c_i,d_i\in \fC, n\geq 1\}
\]
where the product $c_1^{-1}d_1\cdots c_n^{-1}d_n$ is taken inside $\cI(\fC)$. We denote by $0$ the empty function on $\fC$. Using \cite[Lemma~5.6.43]{CELY}, we see that $I_l$ contains $0$ unless $\fC$ is a monoid such that $x\fC\cap y\fC\neq\emptyset$ for all $x,y\in\fC$.  Combining \eqref{eqn:spanning1} and \eqref{eqn:spanning2}, we get that
\begin{equation}
\label{eqn:spanning3}
    C_\lambda^*(\fC)=\overline{\spn}(\{\Lambda_s : s\in I_l\}).
\end{equation}
Moreover, the map $\Lambda\colon I_l\to C_\lambda^*(\fC)$ is a faithful representation of the inverse semigroup $I_l$ by partial isometries in $C_\lambda^*(\fC)$. 
The description in \eqref{eqn:spanning3} has several important consequences. First, the composition law in $I_l$ tells us how to take products of spanning elements in $C_\lambda^*(\fC)$. Second, because the idempotents in an inverse semigroup form a semilattice (i.e., a commutative idempotent semigroup), we see that $C_\lambda^*(\fC)$ contains a canonical commutative C*-subalgebra
\[
D_\lambda(\fC)\coloneq\overline{\spn}(\{1_X : X\in\cJ\})\subseteq\ell^\infty(\fC),
\]
where $\cJ\coloneq\{\dom(s) : s\in I_l\}$ is the semilattice of constructible right ideals of $\fC$. Since the map $X\mapsto \id_X$ is a semilattice isomorphism from $\cJ$ onto the idempotent semilattice of $I_l$, we will often treat these interchangeably. We let $\cJ^\times$ denote the nonempty constructible right ideals of $\fC$.

A \emph{character} on $\cJ$ is a nonzero map $\chi\colon\cJ\to\{0,1\}$ such that $\chi(X\cap Y)=\chi(X)\chi(Y)$ for all $X,Y\in\cJ$ and $\chi(\emptyset)=0$ if $\emptyset\in\cJ$. The Gelfand spectrum of $D_\lambda(\fC)$ is canonically identified with the subspace $\Omega\subseteq\{0,1\}^\cJ$ consisting of the characters $\chi$ on $\cJ$ with the property that whenever $X,X_1,\ldots,X_n\in\cJ^\times$ satisfy $X=\bigcup_{i=1}^n X_i$, then $\chi(X)=1$ implies $\chi(X_i)=1$ for some $i=1,\ldots,n$. 
The space $\Omega$ is a locally compact totally disconnected Hausdorff space with a basis given by compact open subsets of the form 
\[
\Omega(X;\mff)\coloneq \{\chi\in \Omega : \chi(X)=1,\chi(Y)=0\text{ for all } Y\in\mff\} ,
\]
where $X\in\cJ$ and $\mff\subseteq\cJ^\times$ is a finite (possibly empty) subset such that $\bigcup_{Y\in\mff}Y\subseteq X$. For each $X\in\cJ$, we put $\Omega(X)\coloneq \Omega(X;\emptyset)$.

The inverse semigroup $I_l$ acts on $\Omega$: each $s\in I_l$ defines a partial homeomorphism
\[
\Omega(\dom(s))\to \Omega(\im(s)),\quad \chi\mapsto s.\chi,
\]
where $s.\chi(X)\coloneq \chi(s^{-1}(X\cap\im(s)))$ for all $X\in\cJ$. Let 
\[
I_l*\Omega\coloneq \{(s,\chi)\in I_l\times \Omega : \chi(\dom(s))=1\}.
\]
Define an equivalence relation on $I_l*\Omega$ by 
\[
(s,\chi)\sim (t,\chi)
\]
if there exists $X\in\cJ$ such that $\chi(X)=1$ and $s(x)=t(x)$ for all $x\in X$. We let $[s,\chi]$ denote the equivalence class of $(s,\chi)$ with respect to $\sim$. 
The transformation groupoid $I_l\ltimes\Omega$ is the quotient space $(I_l*\Omega)/\sim$ with groupoid operations determined by
\[
[s,t.\chi][t,\chi]\coloneq [st,\chi]\quad \text{ and }\quad [s,\chi]^{-1}\coloneq [s^{-1},s.\chi]
\]
for all $s,t\in I_l$ and $\chi\in\Omega$. 
The range and source maps are given by $\mfr([s,\chi])\coloneq s.\chi$ and $\mfs([s,\chi])\coloneq \chi$, respectively, for all $[s,\chi]\in I_l\ltimes \Omega$. Note that Exel calls $I_l\ltimes\Omega$ the groupoid of germs for the action $I_l\acts\Omega$, see \cite{Exel:comb}.
We tacitly identify $\Omega$ with the unit space of $I_l\ltimes\Omega$ via $\chi\mapsto [\id_X,\chi]$ for all $\chi\in \Omega$, where $X\in\cJ$ is any constructible right ideal with $\chi(X)=1$. 
Subsets of the form $[s,U]\coloneq \{[s,\chi] : \chi\in U\}$ for all $s\in I_l$ and compact open subsets $U$ of $\Omega(\dom(s))$ generate a topology on the groupoid $I_l\ltimes\Omega$
which is locally compact, \'{e}tale (both range and source maps are local homeomorphisms), and ample (there is a basis consisting of compact open bisections) though not necessarily Hausdorff. A bisection is a subset of $I_l\ltimes \Omega$ on which both the range and source maps are injective,
and any basic open subset $[s,U]$ as above is a compact open bisection.

A character $\chi$ on $\cJ$ is said to be \emph{maximal} if $\chi^{-1}(1)$ is maximal with respect to set inclusion in the collection $\{\gamma^{-1}(1) : \gamma \text{ a character of }\cJ\}$. Every maximal character lies in $\Omega$, and we denote by $\Omega_\max$ the subset of $\Omega$ consisting of maximal characters. The \emph{boundary} of $\Omega$ is the closure $\partial\Omega\coloneq\overline{\Omega_\max}$, which is a closed and invariant subset of $\Omega$. 
Given $X\in\cJ$ and a finite subset $\mff\subseteq\cJ^\times$ with $\bigcup_{Y\in\mff}Y\subseteq X$, we put $\partial\Omega(X;\mff)\coloneq\partial\Omega\cap\Omega(X;\mff)$ and $\partial\Omega(X)\coloneq\partial\Omega(X;\emptyset)$.
Compact open subsets of this form are a basis for the topology on $\partial\Omega$. 

\begin{remark}
\label{rmk:bdry}
If $0\notin I_l$, then $\partial\Omega= \Omega_{\max} = \{\chi_\infty\}$ is a single point, where $\chi_\infty\colon \cJ\to\{0,1\}$ is the unique maximal character defined by $\chi(X)=1$ for all $X\in\cJ$. 
\end{remark}

Let us recall some terminology from \cite[Definition 11.5]{Exel:comb}. A subset $F\subseteq\cJ$ of constructible right ideals is said to be a \emph{cover} for $X\in\cJ$ if $Z\subseteq X$ for all $Z\in F$ and for every $Y\in\cJ$ with $Y\subseteq X$, there exists $Z\in F$ such that $Z\cap Y\neq\emptyset$. 
If $0\in I_l$, then $\partial\Omega$ is precisely the set of tight characters of $\cJ$ in the sense of Exel, see \cite[Theorem~12.9]{Exel:comb}. Precisely, this means that $\chi\in\Omega$ lies in $\partial\Omega$ if and only if whenever $X\in \cJ^\times$ and $F$ is a finite cover for $X$, we have $\chi(Y)=1$ for some $Y\in F$. This implies that $\partial\Omega(X,\mff)=\emptyset$ whenever $\mff$ is a cover for $X$ (cf. \cite{DM14} and \cite{Exel21}). 

 Next, we explain how each character $\chi \in \Omega$ gives rise to an analogue of a left regular representation on those groupoid elements whose source equals $\chi$.
 This makes sense for any \'etale groupoid. Since we shall not need to consider such representations for non-Hausdorff groupoids, we shall assume that $I_l\ltimes\Omega$ is Hausdorff.
 Note that $(I_l\ltimes\Omega)_\chi\coloneq\{[s,\chi]\in I_l\ltimes\Omega : s\in I_l, \chi(\dom(s))=1\}$ is discrete since the groupoid is \'etale, and define the left regular representation of $C_c(I_l\ltimes\Omega)$ associated with $\chi$ as
\[
\rho_\chi\colon C_c(I_l\ltimes\Omega)\to \Bz(\ell^2((I_l\ltimes\Omega)_\chi)),
\]
given by 
\[
\rho_\chi(f)\delta_{[t,\chi]}=\sum_{[s,t.\chi]\in (I_l\ltimes\Omega)_{t.\chi}}f([s,t.\chi])\delta_{[st,\chi]},
\]
for all $f\in C_c(I_l\ltimes\Omega)$ and $[t,\chi]\in (I_l\ltimes\Omega)_\chi$. Similarly, given $\chi\in\partial\Omega$, we let 
\[
\partial\rho_\chi\colon C_c(I_l\ltimes\partial\Omega)\to \Bz(\ell^2((I_l\ltimes\partial\Omega)_\chi))
\]
denote the left regular representation of $C_c(I_l\ltimes\partial\Omega)$ associated with $\chi$. 
We then define $C_r^*(I_l\ltimes\Omega)$ as the completion of $\bigoplus_{\chi\in\Omega}\rho_\chi(C_c(I_l\ltimes\Omega))$ in $\Bz(\bigoplus_{\chi\in\Omega}\ell^2((I_l\ltimes\Omega)_\chi)))$, 
and, similarly, $C_r^*(I_l\ltimes\partial\Omega)$ is the completion of $\bigoplus_{\chi\in \partial\Omega}\rho_\chi(C_c(I_l\ltimes \partial\Omega))$ in $\Bz(\bigoplus_{\chi\in \partial \Omega}\ell^2((I_l\ltimes \partial\Omega)_\chi)))$. 
Both $\rho_{\chi}$ and $\partial \rho_{\chi}$ extend to *-homomorphisms (still denoted $\rho_{\chi}$ and $\partial \rho_{\chi}$) of $C_r^*(I_l\ltimes \Omega)$ and $C_r^*(I_l\ltimes\partial\Omega)$, respectively.
From the analysis in \cite[\S~3]{LiGarI} (in particular, \cite[Corollary~3.4]{LiGarI}) and following \cite[Proposition~11.4]{Spi20}, we see that if the groupoid $I_l\ltimes\Omega$ is Hausdorff, then there is a *-isomorphism
\begin{equation}
\label{eqn:Jackmap}
\mfj\colon C_r^*(I_l\ltimes\Omega)\to C_\lambda^*(\fC), \quad 1_{[s,\Omega(\dom(s)))]}\mapsto \Lambda_s,
\end{equation}   
for all $s\in I_l$.

By \cite[Lemma~4.1(i)]{LiGarI}, we know that $I_l\ltimes\Omega$ is Hausdorff if and only if for all $s\in I_l$, there exists a finite (possibly empty) set $F\subseteq\cJ^\times$ such that $\{x\in\dom(s) :s(x)=x\}=\bigcup_{X\in F}X$. For instance, this means that $I_l\ltimes\Omega$ is Hausdorff whenever $\fC$ is cancellative and finitely aligned in the sense of \cite[Definition~3.2]{Spi20}. Thus, in order to use the identification of C*-algebras in \eqref{eqn:Jackmap}, we assume $I_l \ltimes \Omega$ is Hausdorff.

\begin{definition}
\label{def:bdry}
Let $\fC$ be a cancellative small category, and suppose that $I_l\ltimes\Omega$ is Hausdorff. The \emph{(reduced) boundary quotient of $C_\lambda^*(\fC)$} is the C*-algebra $\partial C_\lambda^*(\fC)\coloneq C_r^*(I_l\ltimes\partial\Omega)$.
\end{definition}
The \emph{boundary quotient map} is the surjective *-homomorphism
\begin{equation}
\label{eqn:bdryqmap}
    q_\partial\colon C_r^*(I_l\ltimes\Omega)\to C_r^*(I_l\ltimes\partial\Omega)
\end{equation}
determined by $q_\partial(f)=f\vert_{I_l\ltimes\partial\Omega}$ for all $f\in C_c(I_l\ltimes\Omega)$. Note that such a *-homomorphism exists because the restriction map $C_c(I_l\ltimes\Omega)\to C_c(I_l\ltimes\partial\Omega)$ is contractive for the reduced C*-norms: the norm on $C_c(I_l\ltimes\Omega)$ is defined by a supremum over $\Omega$, where as the norm on $C_c(I_l\ltimes\partial\Omega)$ is defined by a supremum over $\partial\Omega$, a subset of $\Omega$.

When $I_l\ltimes\Omega$ is Hausdorff, the isomorphism in \eqref{eqn:Jackmap} justifies Definition~\ref{def:bdry}. In the non-Hausdorff setting, there is another candidate for a groupoid model for $C_\lambda^*(\fC)$ (see \cite[\S~3]{LiGarI}), and these two groupoids can be different, see \cite{Sch}.


\section{Existence of coactions on C*-envelopes} \label{s:exists}

In this section, we extend Katayama duality for normal coactions of discrete groups on C*-algebras \cite[Theorem~8]{Kat} to normal coactions of discrete groups on operator algebras. The proof is a straightforward generalization of the C*-algebra version.
We then use this Katayama duality to prove our main result (Theorem \ref{thm:main}) that a normal coaction on an operator algebra extends to a coaction on the C*-envelope.

Let $\A\subseteq\Bz(\cH)$ be an operator algebra, and suppose $\delta\colon G\coacts\A$ is a coaction by a discrete group $G$. 
Let $M\colon c_0(G)\to \Bz(\ell^2(G))$ be the canonical representation by diagonal multiplication operators, and define $j_{c_0(G)}\colon c_0(G)\to \Bz(\cH\otimes\ell^2(G))$ by $j_{c_0(G)}(f)\coloneq I\otimes M_f$ for all $f\in c_0(G)$. We view $\delta_\lambda\coloneq (\id_\A\otimes\lambda)\circ\delta$ as a homomorphism $\A\to \Bz(\cH\otimes\ell^2(G))$. 

\begin{definition}
\label{def:coactioncproduct}
 The (reduced) \emph{crossed product} of $\A$ by the coaction $\delta$ of $G$ is the operator algebra 
\begin{equation}
\A\rtimes_\delta G\coloneq\overline{\alg}(\{\delta_\lambda(a)j_{c_0(G)}(f) : a\in\A, f\in c_0(G)\})\subseteq\Bz(\cH\otimes\ell^2(G)).
\end{equation}
\end{definition}

For $g\in G$, there is a completely isometric automorphism $\hat{\delta}_g\colon \A\rtimes_\delta G\to\A\rtimes_\delta G$ such that $\hat{\delta}_g(x)=(I\otimes\rho_g^*)x(I\otimes\rho_g)$ for all $x\in \A\rtimes_\delta G$, where $\rho\colon G\to \cU(\ell^2(G))$ is the right regular representation of $G$. We call $\hat{\delta}$ the \emph{dual action} of $G$ on $\A\rtimes_\delta G$. On generators, we have 
\[
\hat{\delta}_g(\delta_\lambda(a)j_{c_0(G)}(f))=\delta_\lambda(a)j_{c_0(G)}(\sigma_g(f)),
\]
for all $a\in \A$ and $f\in c_0(G)$, where $\sigma_g\colon c_0(G)\to c_0(G)$ is given by $\sigma_g(f)(h)=f(hg)$ for all $g,h\in G$ and $f\in c_0(G)$. 

Consider the maps
\begin{itemize}
    \item $k_\A(a)\coloneq \delta_\lambda(a)\otimes I_{\ell^2(G)}$ for all $a\in \A$;
    \item $k_{c_0(G)}(f)\coloneq I\otimes ((M\otimes M)\circ\nu(f))$ for all $f\in c_0(G)$;
    \item and $k_G(g)\coloneq I\otimes I\otimes \lambda_g$ for all $g\in G$,
\end{itemize}
where $\nu\colon c_0(G)\to \cM(c_0(G)\otimes c_0(G))$ is given by $\nu(f)(g,h)\coloneq f(gh^{-1})$ for all $g,h\in G$ and $f\in c_0(G)$. We define the \emph{reduced double crossed product} $\A\rtimes_\delta G\rtimes_{\hat{\delta}}^rG$ to be the operator algebra 
\[
\A\rtimes_\delta G\rtimes_{\hat{\delta}}^rG\coloneq \overline{\alg}(\{k_\A(\A)k_{c_0(G)}(c_0(G))k_G(G)\})\subseteq\Bz(\cH\otimes\ell^2(G)\otimes\ell^2(G)).
\]
\begin{remark}
The operator algebra $\A\rtimes_\delta G\rtimes_{\hat{\delta}}^rG$ coincides with the relative crossed product of $\A\rtimes_\delta G$ by the action $\hat{\delta}$ of $G$ with respect to the $\hat{\delta}$-admissible cover $C^*(\A\rtimes_\delta G)\subseteq \Bz(\cH\otimes\ell^2(G))$ of $\A\rtimes_\delta G$, as defined in \cite[Definition~3.2]{KR}. 
\end{remark}

Next, define unitary operators $U,S\in \cU(\ell^2(G)\otimes\ell^2(G))$ by setting 
\[
Ue_g\otimes e_h\coloneq e_g\otimes e_{gh}\quad\text{ and } \quad Se_g\otimes e_h\coloneq e_g\otimes e_{h^{-1}}
\]
for all $g,h\in G$, where $\{e_g:g\in G\}$ is the canonical orthonormal basis for $\ell^2(G)$. 
There is a reduced coaction $\hat{\hat{\delta}}$ of $G$ on $\A\rtimes_\delta G\rtimes_{\hat{\delta}}^rG$ given by
\begin{equation}    
\label{eqn:deltahathat}
\hat{\hat{\delta}}(x)\coloneq(I_\cH\otimes I_{\ell^2(G)}\otimes U)(x\otimes I_{\ell^2(G)})(I_\cH\otimes I_{\ell^2(G)}\otimes U^*)
\end{equation}
for all $x\in \A\rtimes_\delta G\rtimes_{\hat{\delta}}^rG$. For $a\in \A$, $f\in c_0(G)$, and $g\in G$, a straightforward computation yields the formula 
\[
\hat{\hat{\delta}}(k_\A(a) k_{c_0(G)}(f) k_G(g)) = k_\A(a) k_{c_0(G)}(f) k_G(g) \otimes \lambda_g.
\]
We are now ready to extend Katayama duality \cite[Theorem~8]{Kat} to the non-selfadjoint setting. Recall that when $\delta$ is normal, $\delta_\lambda$ is a completely isometric isomorphism from $\A$ onto $\delta_\lambda(\A)$.

\begin{theorem}[Katayama duality for operator algebras]
\label{thm:Katduality}
Let $\A\subseteq\Bz(\cH)$ be an operator algebra, let $G$ be a discrete group, and suppose $\delta\colon G\downtouparrow \A$ is a normal coaction. Then, there is a completely isometric isomorphism 
\[
\Psi \colon \A\rtimes_\delta G\rtimes^r_{\hat{\delta}}G \to \delta_\lambda(\A)\otimes \Kz
\]
such that the reduced coaction $\hat{\hat{\delta}}$ from \eqref{eqn:deltahathat} on the double crossed product is conjugated to the reduced coaction $\tilde{\delta}$ of $G$ on $\delta_\lambda(\A)\otimes \Kz$
given by
\begin{equation}
    \label{eqn:tildedelta}
\tilde{\delta}(x)=(I_\cH\otimes I_{\ell^2(G)}\otimes U^*)[(\delta_\lambda\otimes\Sigma)\circ (\delta_\lambda\circ (\delta_\lambda^{-1}\otimes\id))(x)](I_\cH\otimes I_{\ell^2(G)}\otimes U),
\end{equation}
for all $x\in \delta_\lambda(\A)\otimes \Kz$, where $\Sigma\colon C_\lambda^*(G)\otimes\Kz\to \Kz\otimes C_\lambda^*(G)$ is the flip map.
\end{theorem}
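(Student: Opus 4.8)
The plan is to follow the classical C*-algebraic proof of Katayama duality, keeping track of complete isometries instead of *-isomorphisms, and using the spectral-subspace machinery from \S~\ref{ss:coactions} to reduce everything to an explicit computation on the dense subalgebra spanned by the $\A^\delta_g$. First I would fix notation: since $\delta$ is normal, $\delta_\lambda = (\id_\A \otimes \lambda)\circ \delta$ is a completely isometric isomorphism onto $\delta_\lambda(\A) \subseteq \Bz(\cH\otimes\ell^2(G))$, so $\delta_\lambda(\A)\otimes\Kz$ makes sense as an operator algebra on $\cH\otimes\ell^2(G)\otimes\ell^2(G)$ with $\Kz = \Kz(\ell^2(G))$. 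The key spatial input is a single unitary $W \in \cU(\ell^2(G)\otimes\ell^2(G))$ — essentially the one implementing the Fourier transform / the relation between $M$, $\lambda$, $\rho$ — such that conjugating the generators $k_\A(a)$, $k_{c_0(G)}(f)$, $k_G(g)$ by $I_\cH\otimes W$ (composed with the flip $S$ where needed, exactly as in \cite{Kat}) lands each generator inside $\delta_\lambda(\A)\otimes\Kz$, with $k_\A(a)$ going (roughly) to $\delta_\lambda(a)$ placed appropriately, $k_{c_0(G)}(f)$ going to $1\otimes M_f$-type operators, and $k_G(g)$ going to $1\otimes\lambda_g$-type operators; the upshot is that the images together generate exactly $\delta_\lambda(\A)\otimes\Kz$. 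Defining $\Psi(x) \coloneq (I_\cH\otimes W^*)x(I_\cH\otimes W)$ (up to the flip adjustments) then gives a completely isometric homomorphism, and one checks surjectivity onto $\delta_\lambda(\A)\otimes\Kz$ by verifying the generators of the target are hit.

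The main work is organized in four steps. \textbf{Step 1:} identify the correct unitary $W$ and carry out the generator-by-generator conjugation computations, showing $\Psi$ as defined is a well-defined completely isometric isomorphism $\A\rtimes_\delta G\rtimes^r_{\hat\delta}G \to \delta_\lambda(\A)\otimes\Kz$; this is where I would borrow the explicit formulas from \cite[Theorem~8]{Kat} and simply note that they only use the homomorphism and complete-isometry structure, never self-adjointness, so they pass verbatim to the operator-algebra setting. \textbf{Step 2:} recall from the earlier discussion (the formula $\hat{\hat\delta}(k_\A(a)k_{c_0(G)}(f)k_G(g)) = k_\A(a)k_{c_0(G)}(f)k_G(g)\otimes\lambda_g$) that $\hat{\hat\delta}$ is a reduced coaction with spectral subspaces graded by $k_G(g)$; by Proposition~\ref{prop:5authors} (or directly), it suffices to track where $\hat{\hat\delta}$ sends these generators. \textbf{Step 3:} compute $\Psi \circ \hat{\hat\delta} \circ \Psi^{-1}$ on the image generators; by Step 1 this reduces to a finite computation with the unitaries $U, W, S$ and $\Sigma$, and the claim is that the result is exactly the formula~\eqref{eqn:tildedelta} for $\tilde\delta$. \textbf{Step 4:} verify that $\tilde\delta$ as defined in \eqref{eqn:tildedelta} is genuinely a reduced coaction on $\delta_\lambda(\A)\otimes\Kz$ — i.e., check the coaction identity \ref{def:reducedcoaction}(i) and nondegeneracy \ref{def:reducedcoaction}(ii); nondegeneracy follows because $\Kz$ absorbs the $c_0(G)$ part, and the coaction identity follows by transport of structure from $\hat{\hat\delta}$ via $\Psi$ once Step 3 is in place, or can be checked directly using $\Delta_\lambda(\lambda_g) = \lambda_g\otimes\lambda_g$ and the coaction identity for $\delta_\lambda$.

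The hard part will be \textbf{Step 1}, specifically pinning down the unitary $W$ and the exact placement of tensor legs so that conjugation sends $k_\A(\A)k_{c_0(G)}(c_0(G))k_G(G)$ \emph{onto} $\delta_\lambda(\A)\otimes\Kz$ rather than into a proper subalgebra — this is precisely the content of Katayama's theorem and the only place where the argument has genuine bite. A subtlety worth flagging: in the operator-algebra (non-self-adjoint) setting one must be slightly careful that $\A$ has a contractive approximate identity (present by hypothesis, since we apply this to algebras covered by Theorem~\ref{thm:A}) so that the crossed products are nondegenerate and the various $\overline{\alg}$'s behave as expected; but this is a standing assumption and causes no real difficulty. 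Everything else — the coaction identity for $\tilde\delta$, nondegeneracy, the fact that $\Psi$ intertwines the dual actions — is either routine transport of structure through $\Psi$ or a direct unwinding of definitions, and I would present those compactly rather than in full detail, pointing to \cite{Kat} for the model computation.
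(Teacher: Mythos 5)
Your proposal follows essentially the same route as the paper's proof: both define $\Psi$ as conjugation by the unitary $V=I_\cH\otimes US$ (your $W$ composed with the flip), verify the three generator formulas to see that the image lands in $\delta_\lambda(\A)\otimes\Kz$, prove surjectivity using $\overline{c_0(G)C^*_\rho(G)}=\Kz$ together with nondegeneracy of $\delta_\lambda$, and then obtain the formula for $\tilde\delta$ by transport of structure; the paper likewise presents this only as a sketch, deferring the model computation to the C*-algebraic case (citing \cite{EKQR} where you cite \cite{Kat}). Your correct identification of Step 1 --- in particular that $k_\A(a)$ for $a\in\A_g$ is sent to $\delta_\lambda(a)\otimes\lambda_g$ and that surjectivity (not just injectivity) is where the nondegeneracy of the coaction is genuinely used --- confirms the approaches coincide.
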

\begin{proof}
The proof is essentially the same as the proof of Katayama duality for C*-algebras as given in \cite[Lemma~A.70~and~Theorem~A.69]{EKQR}, so we give only a sketch. Consider the unitary $V\coloneq I_\cH\otimes US\in \cU(\cH\otimes\ell^2(G)\otimes\ell^2(G))$. Direct calculations yield the following formulas:
\begin{enumerate}[\upshape(i)]
  \item $\Ad(V)(k_\A(a))=\delta_\lambda(a)\otimes \lambda_g$ for $g\in G$ and $a\in \A_g$;
\item $\Ad(V)(k_{c_0(G)}(f))=I\otimes I\otimes M_f$ for $f\in c_0(G)$;
    \item $\Ad(V)(k_G(g))=I\otimes I\otimes\rho_g$ for $g\in G$.
\end{enumerate}
Define $\Psi$ to be the restriction to the double crossed product of conjugation by the unitary $V$.
It follows from (i), (ii), and (iii) that $\Psi$ carries $\A\rtimes_\delta G\rtimes_{\hat{\delta}}^rG$ completely isometrically into $\delta_\lambda(\A)\otimes\Kz$.

Next, we show that $\Psi$ is surjective.
Observe from (i), (ii), and (iii) again and the fact that $\overline{c_0(G) C_\rho^*(G)}=\Kz$, we have 
\begin{align*}
 \Ad(V)(k_{c_0(G)}(c_0(G)))\Ad(V)(k_G(G))&=\Cz I_\cH\otimes \Cz I_{\ell^2(G)}\otimes \overline{c_0(G)  C_\rho^*(G)}\\
 &=\Cz I_\cH\otimes \Cz I_{\ell^2(G)}\otimes\Kz,
\end{align*}
so it suffices to show that 
\[
\overline{\Ad(V)(k_\A(\A))[\Cz I_\cH\otimes \Cz I_{\ell^2(G)}\otimes\Kz}]= \delta_\lambda(\A)\otimes \Kz.
\]

Since $\delta_\lambda$ is nondegenerate and $\Kz = \overline{C^*_{\lambda}(G) c_0(G)}$, we see that
\begin{align*}
\overline{\delta_\lambda(\A)(\Cz I_\cH\otimes\Kz)}&=\overline{\delta_\lambda(\A)(\Cz I_\cH\otimes C_\lambda^*(G))(\Cz I_\cH\otimes c_0(G))}\\
&=\overline{(\A\otimes C_\lambda^*(G))(\Cz I_\cH\otimes c_0(G))}\\
&=\A\otimes\Kz.
\end{align*}
A straightforward computation then shows that 
\begin{align*}
    \Ad(V)(k_\A(\A))[\Cz I_\cH\otimes & \Cz I_{\ell^2(G)}\otimes\Kz]
        =(\delta_\lambda\otimes\id_{\Kz})[\delta_\lambda(\A)(\Cz I_\cH\otimes\Kz)],
\end{align*}
so by taking closures and using what we established above, we obtain 
\begin{align*}
\overline{\Ad(V)(k_\A(\A))[\Cz I_\cH\otimes \Cz I_{\ell^2(G)}\otimes\Kz}]&= \overline{(\delta_\lambda\otimes\id_\Kz)[\delta_\lambda(\A)(\Cz I_\cH\otimes\Kz)]}\\
&=(\delta_\lambda\otimes\id_\Kz)[\overline{\delta_\lambda(\A)(\Cz I_{\cH}\otimes\Kz)}]\\
&=\delta_\lambda(\A)\otimes\Kz.
\end{align*}
This proves that $\Psi$ is surjective, so $\Psi$ is a completely isometric isomorphism.

Finally, the double crossed product is invariant for the induced double-dual coaction $\hat{\hat{\delta}}$,
so the fact that $\Psi$ conjugates $\hat{\hat{\delta}}$ into $\tilde{\delta}$ given in formula \eqref{eqn:tildedelta} follows from a direct calculation.
\end{proof}

Theorem~\ref{thm:Katduality} extends Katayama's original result \cite[Theorem~8]{Kat} by Remark~\ref{rem:auto-C*-coaction} and Proposition~\ref{prop:auto-coaction}. Before turning to our main result, we need a technical lemma.
\begin{lemma}
\label{lem:coaction_cai}
   Suppose $\A$ has a contractive self-adjoint approximate identity, and assume that $\delta\colon G\coacts\A$ is a coaction by a discrete group $G$. Then, the coaction crossed product $\A\rtimes_\delta G$ has a contractive self-adjoint approximate identity.
\end{lemma}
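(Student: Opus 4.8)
The goal is to show that $\A\rtimes_\delta G = \overline{\alg}(\{\delta_\lambda(a)j_{c_0(G)}(f) : a\in\A, f\in c_0(G)\})$ has a contractive approximate identity, given that $\A$ has one, say $(e_i)_{i\in I}$. The natural candidate is the net obtained by combining $(\delta_\lambda(e_i))_i$ with an approximate identity for $c_0(G)$, namely the net $(1_F)_F$ of indicator functions of finite subsets $F\subseteq G$ (ordered by inclusion). So I would consider the doubly-indexed net $u_{(i,F)} \coloneq \delta_\lambda(e_i)\,j_{c_0(G)}(1_F)$, indexed by the product directed set $I\times \{F\subseteq G \text{ finite}\}$.

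**Key steps.** First, I would check these elements actually lie in $\A\rtimes_\delta G$: each $\delta_\lambda(e_i)j_{c_0(G)}(1_F)$ is by definition a generator. Second, contractivity: $\|\delta_\lambda(e_i)\| \le 1$ since $\delta_\lambda$ is completely contractive (it is completely isometric when $\delta$ is normal, but contractivity suffices), and $\|j_{c_0(G)}(1_F)\| = \|I\otimes M_{1_F}\| \le 1$; the two operators need not commute in general, but wait — $\delta_\lambda(e_i) = (\id_\A\otimes\lambda)\delta(e_i)$ lands in $\A\otimes C^*_\lambda(G)$ acting on $\cH\otimes\ell^2(G)$, while $j_{c_0(G)}(1_F) = I_\cH\otimes M_{1_F}$; these do not commute, so I need the product to still be a contraction, which it is simply because a product of contractions is a contraction. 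So $\|u_{(i,F)}\| \le 1$. Third, the approximate-identity property. It suffices to check $u_{(i,F)}\, x \to x$ and $x\, u_{(i,F)} \to x$ for $x$ ranging over a dense set, namely products $\delta_\lambda(a)j_{c_0(G)}(g)$ with $a\in\A$, $g\in c_0(G)$ of finite support (these span a dense subalgebra). For such $x$, once $F$ contains $\supp(g)$ we have $j_{c_0(G)}(1_F)j_{c_0(G)}(g) = j_{c_0(G)}(g)$, so
\[
u_{(i,F)}\,\delta_\lambda(a)j_{c_0(G)}(g) = \delta_\lambda(e_i)\,j_{c_0(G)}(1_F)\,\delta_\lambda(a)\,j_{c_0(G)}(g).
\]
The obstacle is that $j_{c_0(G)}(1_F)$ and $\delta_\lambda(a)$ do not commute, so I cannot immediately slide $1_F$ past $a$. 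Here I would use the grading: it is enough to treat $a\in\A_g^\delta$ for $g\in G$ (these are dense by Proposition~\ref{prop:auto-coaction}), in which case $\delta_\lambda(a) = a\otimes\lambda_g$, and conjugating $j_{c_0(G)}(1_F)=I\otimes M_{1_F}$ past $I\otimes\lambda_g$ just translates the set: $(I\otimes\lambda_g^*)(I\otimes M_{1_F})(I\otimes\lambda_g) = I\otimes M_{1_{g^{-1}F}}$. Thus $\delta_\lambda(a)\,j_{c_0(G)}(1_F) = j_{c_0(G)}(1_{?})\,\delta_\lambda(a)$ up to a translate; more simply, $j_{c_0(G)}(1_F)\,\delta_\lambda(a) = \delta_\lambda(a)\,j_{c_0(G)}(\sigma_{g}^{-1}(1_F))$ for $a\in\A_g^\delta$, and since $\sigma_g$ permutes the finite subsets of $G$, the translated indicator still eventually dominates any fixed finitely-supported function. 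Combining, for $a$ in a spectral subspace and $F$ large, $u_{(i,F)}\delta_\lambda(a)j_{c_0(G)}(g) = \delta_\lambda(e_i a)\,j_{c_0(G)}(g')$ for a suitable finitely-supported $g'$ with $j_{c_0(G)}(g') \to$ the right thing, and then $\delta_\lambda(e_i a)\to\delta_\lambda(a)$ completely isometrically as $e_i a\to a$. The right-handed version is symmetric and slightly easier since $j_{c_0(G)}(g)j_{c_0(G)}(1_F) = j_{c_0(G)}(g)$ once $F\supseteq\supp(g)$, and then one only needs $\delta_\lambda(a)\delta_\lambda(e_i) = \delta_\lambda(a e_i)\to\delta_\lambda(a)$.

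**Main obstacle.** The genuine subtlety is precisely the noncommutativity of $j_{c_0(G)}(c_0(G))$ with $\delta_\lambda(\A)$, which is why one cannot just take a tensor-product approximate identity blindly; the fix is to restrict to spectral subspaces where $\delta_\lambda(a)$ becomes $a\otimes\lambda_g$ and the conjugation action on $c_0(G)$ is just the (bijective) translation $\sigma_g$, so a cofinal family of finitely-supported functions is preserved. A mild bookkeeping point is that one should pass to the product directed set and argue with $\eps/2$ estimates: choose $F$ large depending on the (finitely many) group elements appearing in a given $x$, then choose $i$ large so that $\|e_i a - a\|$ is small for the (finitely many) relevant $a$'s. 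Once the dense-subalgebra case is settled, the general case follows from the uniform bound $\|u_{(i,F)}\|\le 1$ by a routine $3\eps$-argument, exactly as in the standard proof that an approximate identity for a dense subalgebra with uniformly bounded net extends to the closure.
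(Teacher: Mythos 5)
Your overall strategy is the same as the paper's: take the net $\delta_\lambda(e_i)j_{c_0(G)}(\chi_F)$ over the product directed set, reduce to generators $\delta_\lambda(a)j_{c_0(G)}(\chi_E)$ with $a\in\A_g^\delta$, and exploit the commutation relation between multiplication operators and $\delta_\lambda$ of graded elements. Your left-multiplication computation is correct (modulo the harmless slip that the conjugation by $I\otimes\lambda_g$ produces the left translate $\chi_{g^{-1}F}$, not $\sigma_g^{-1}(\chi_F)$). The gap is in the right-multiplication step. You write $x\,u_{(i,F)}=\delta_\lambda(a)j_{c_0(G)}(\chi_E)\,\delta_\lambda(e_i)\,j_{c_0(G)}(\chi_F)$ and then invoke ``$j_{c_0(G)}(\chi_E)j_{c_0(G)}(\chi_F)=j_{c_0(G)}(\chi_E)$ once $F\supseteq E$'' together with $\delta_\lambda(ae_i)\to\delta_\lambda(a)$. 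But $\delta_\lambda(e_i)$ sits \emph{between} $j_{c_0(G)}(\chi_E)$ and $j_{c_0(G)}(\chi_F)$, and since $e_i$ is a general element of $\A$ --- not an element of a single spectral subspace --- you have no licence to commute $j_{c_0(G)}(\chi_E)$ past $\delta_\lambda(e_i)$. This is exactly the noncommutativity you correctly single out as the main obstacle on the left-hand side; it does not disappear on the right-hand side, and as written the convergence $x\,u_{(i,F)}\to x$ is not established.

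The step is repairable in two ways. (a) Within your setup: move $j_{c_0(G)}(\chi_E)$ to the \emph{left} past $\delta_\lambda(a)$ instead (legitimate, since $a\in\A_g^\delta$ gives $\delta_\lambda(a)j_{c_0(G)}(\chi_E)=j_{c_0(G)}(\chi_{gE})\delta_\lambda(a)$); then $x\,u_{(i,F)}=j_{c_0(G)}(\chi_{gE})\delta_\lambda(ae_i)j_{c_0(G)}(\chi_F)$, and since $j_{c_0(G)}(\chi_{gE})\delta_\lambda(a)j_{c_0(G)}(\chi_F)=\delta_\lambda(a)j_{c_0(G)}(\chi_E\chi_F)=x$ for $F\supseteq E$, one gets $\|x\,u_{(i,F)}-x\|\le\|ae_i-a\|\to 0$ uniformly in $F$. (b) The paper's route: first replace $\{e_i\}$ by $\{\Ez_e(e_i)\}$ and prove (this is a small separate lemma, carried out in the paper via the identities $\Ez_g(ab)=a\,\Ez_e(b)$ for $a\in\A_g^\delta$) that this is still a contractive approximate identity for $\A$; since $\Ez_e(e_i)\in\A_e^\delta$ one has $\delta_\lambda(\Ez_e(e_i))=\Ez_e(e_i)\otimes I$, which commutes with every $j_{c_0(G)}(f)$, and both one-sided computations become the naive ones. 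Either repair completes your proof; the version you wrote does not.
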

\begin{proof} 
First, suppose we have a contractive self-adjoint approximate identity $\{a_{\alpha}\}_{\alpha \in I}$ of $\A$ and let $\Ez_e$ be the canonical conditional expectation onto $\A_e$, as described in \S~\ref{ss:coactions}. 
We will show that $\{\Ez_e(a_{\alpha})\}_{\alpha \in I}$, which is a net in $\A_e$, is a contractive self-adjoint approximate identity for $\A$. It is clearly contractive and self-adjoint, because $\Ez_e$ is a contractive idempotent. Now, for $a\in \A_g$ and $g\in G$ we first show that $a\Ez_e(a_{\alpha}) \rightarrow a$. Indeed,
\[
a = \lim_{\alpha}\Ez_g(aa_{\alpha}) = \lim_{\alpha}\Ez_g(a\Ez_e(a_{\alpha})) = 
\Ez_g(a) \lim_{\alpha}\Ez_e(a_{\alpha}) = a \lim_{\alpha}\Ez_e(a_{\alpha}).
\]
Now let $a\in \A$ and $\epsilon>0$.
As the linear span of spectral subspaces is dense, there is a finite subset $F\subseteq G$ and elements $a_g \in \A_g$ for $g\in F$ such that $\sum_{g\in F} a_g$ is $\epsilon$-close to $a$. Hence, since $\{\Ez_e(a_{\alpha})\}_{\alpha \in I}$ is an approximate identity for $\sum_{g\in F} a_g$ for arbitrary $\epsilon>0$, it is also an approximate identity for $a$.

Hence, we may assume without loss of generality that $\{a_{\alpha}\}_{\alpha \in I}$ is a contractive self-adjoint approximate unit for $\A$ which is in $\A_e$. Let $\{\chi_F\}_F$ be the net of characteristic functions supported on finite subsets $F\subseteq G$, which we denote by $\Fin(G)$. We will show that the net of real parts $\{\frac{\chi_F a_{\alpha} + a_{\alpha} \chi_F}{2} \}_{(\alpha,F) \in I \times \Fin(G)}$, which is comprised of contractive self-adjoint elements, is an approximate identity for $\A\rtimes_\delta G$.

It suffices to show that $\{a_{\alpha} \chi_F \}_{(\alpha,F) \in I \times \Fin(G)}$ is both a left and a right approximate identity on generators of the form $\delta_\lambda(a)j_{c_0(G)}(\chi_E)$, where $a\in \A_g$ for $g\in G$, and $E\subseteq G$ is finite, and the proof for $\{\chi_F a_{\alpha}\}$ is then handled similarly. 
So, for $\{a_{\alpha} \chi_F \}_{(\alpha,F) \in I \times \Fin(G)}$, if we take $F$ finite such that $gE, E\subseteq F$, and by multiplying on the right we see that
\begin{align*}
\delta_\lambda(a)j_{c_0(G)}(\chi_E) \delta_\lambda(a_{\alpha})j_{c_0(G)}(\chi_F) &= 
\delta_\lambda(a)(I \otimes M_{\chi_E})  (a_{\alpha} \otimes I) (I\otimes M_{\chi_F})\\  
&= \delta_\lambda(aa_{\alpha})j_{c_0(G)}(\chi_E)
\end{align*}
converges to $\delta_\lambda(a)j_{c_0(G)}(\chi_E)$.
By multiplying on the left,
\begin{align*}
\delta_\lambda(a_{\alpha})j_{c_0(G)}(\chi_F) \delta_\lambda(a)j_{c_0(G)}(\chi_E) &= (a_{\alpha} \otimes I) (I\otimes M_{\chi_F}) (a \otimes \lambda_g)(I \otimes M_{\chi_E}) \\
&= (a_{\alpha}\otimes I) (I \otimes M_{\chi_F})(I\otimes M_{\chi_{gE}}) (a\otimes \lambda_g) \\
&= (a_{\alpha}\otimes I)(I\otimes M_{\chi_{gE}}) (a\otimes \lambda_g) \\
&= \delta_\lambda(a_{\alpha}a) j_{c_0(G)}(\chi_E)  
\end{align*}
converges to $\delta_\lambda(a) j_{c_0(G)}(\chi_E)$. Hence, we see that $\{a_{\alpha} \chi_F \}_{(\alpha,F) \in I \times \Fin(G)}$ is an approximate identity for linear generators of $\A\rtimes_\delta G$, and therefore for their closure as well.
\end{proof}

We are now ready for the main result of this paper.

\begin{theorem}
\label{thm:main}
    Let $\A$ be an operator algebra with a contractive self-adjoint approximate identity, and let $\kappa_\A\colon \A \to C^*_\env(\A)$ be the canonical completely isometric inclusion.
    Suppose $\delta\colon G \downtouparrow\A$ is a normal coaction of a discrete group $G$. 
    Then, there exists a normal coaction $\delta_\env\colon G \downtouparrow C_\env^*(\A)$ such that $\delta_\env\circ\kappa_\A=(\kappa_\A\otimes\id_{C^*(G)})\circ\delta$.
\end{theorem}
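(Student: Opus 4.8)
The plan is to realize a stabilization of $\A$ as a reduced double crossed product that carries a canonical reduced coaction, transport that coaction to the C*-envelope of the double crossed product by means of a Hao--Ng-type identification, descend to $C_\env^*(\A)$ by restricting to a corner, and finally upgrade to a normal coaction via Proposition~\ref{prop:5authors}.

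First I would assemble the crossed products. Since $\A$ has a contractive approximate identity, so does $\A\rtimes_\delta G$ (Lemma~\ref{lem:coaction_cai}), and $\A\rtimes_\delta G$ carries the dual action $\hat\delta$ of $G$. As every action of a discrete group on an operator algebra extends to its C*-envelope, $\hat\delta$ extends to an action (still denoted $\hat\delta$) on $C_\env^*(\A\rtimes_\delta G)$; set $\mathcal B\coloneq C_\env^*(\A\rtimes_\delta G)\rtimes^r_{\hat\delta}G$, a C*-algebra carrying its dual coaction $\widehat{\hat{\delta}}$, which is a reduced coaction of $G$. The completely isometric, $\hat\delta$-equivariant inclusion $\A\rtimes_\delta G\hookrightarrow C_\env^*(\A\rtimes_\delta G)$ induces, after forming reduced crossed products (well behaved for discrete groups), a completely isometric inclusion $\A\rtimes_\delta G\rtimes^r_{\hat\delta}G\hookrightarrow\mathcal B$ that is equivariant for $\hat{\hat{\delta}}$ and $\widehat{\hat{\delta}}$ and exhibits $\mathcal B$ as a C*-cover of the reduced double crossed product.

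The crucial point --- and essentially the only nonformal input --- is that $\mathcal B$ is in fact the C*-envelope of $\A\rtimes_\delta G\rtimes^r_{\hat\delta}G$; equivalently, the Shilov boundary ideal of $\A\rtimes_\delta G\rtimes^r_{\hat\delta}G$ in $\mathcal B$ vanishes. This is the reduced Hao--Ng isomorphism for discrete groups (from the theory of crossed products of operator algebras, cf.\ \cite{KR}). One way to see it: this Shilov ideal meets the $e$-graded part $C_\env^*(\A\rtimes_\delta G)$ of $\mathcal B$ trivially, by minimality of the latter, and a graded ideal with trivial $e$-graded part vanishes (use the faithful conditional expectation attached to $\widehat{\hat{\delta}}$); so one only needs the Shilov ideal to be graded. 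I expect establishing this --- or deducing it from known Hao--Ng results --- to be the main obstacle. Alternatively, one can carry $\hat{\hat{\delta}}$ on the concrete cover $C^*(\A\rtimes_\delta G\rtimes^r_{\hat\delta}G)\subseteq\Bz(\cH\otimes\ell^2(G)\otimes\ell^2(G))$, where it is conjugation by a multiplicative unitary, and argue directly that the Shilov boundary ideal there is $\hat{\hat{\delta}}$-invariant --- which amounts to the same thing.

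Granting this, the reduced coaction $\widehat{\hat{\delta}}$ on $\mathcal B=C_\env^*(\A\rtimes_\delta G\rtimes^r_{\hat\delta}G)$ restricts to $\hat{\hat{\delta}}$ on the double crossed product. By Katayama duality (Theorem~\ref{thm:Katduality}) there is a completely isometric, equivariant isomorphism $\A\rtimes_\delta G\rtimes^r_{\hat\delta}G\cong\delta_\lambda(\A)\otimes\Kz$, carrying $\hat{\hat{\delta}}$ to the reduced coaction $\tilde\delta$ of \eqref{eqn:tildedelta}; the restriction of $\tilde\delta$ to the corner $\delta_\lambda(\A)\otimes e_{11}\cong\A$ recovers $\delta_\lambda$, which is precisely the content of Katayama duality. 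Combining with the standard identification $C_\env^*(\A\otimes\Kz)=C_\env^*(\A)\otimes\Kz$ (invariance of the C*-envelope under stabilization), we obtain a reduced coaction $\Theta$ on $C_\env^*(\A)\otimes\Kz$ extending $\tilde\delta$. A direct computation (the unitary $U$ of Theorem~\ref{thm:Katduality} fixes $\Cz e_e\otimes\ell^2(G)$, hence fixes the corner $1\otimes e_{11}$) shows that $\Theta$ restricts to a reduced coaction $\eps_\env$ on $C_\env^*(\A)\otimes e_{11}\cong C_\env^*(\A)$; tracing through the identifications --- using uniqueness of the C*-envelope and the bijection between ideals of $C_\env^*(\A)$ and of $C_\env^*(\A)\otimes\Kz$ to see that the embedded copy of $\A$ is exactly $\kappa_\A$ --- one checks that $\eps_\env\circ\kappa_\A=(\kappa_\A\otimes\id_{C^*_\lambda(G)})\circ\delta_\lambda$. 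Finally, Proposition~\ref{prop:5authors} lifts $\eps_\env$ to a unique normal coaction $\delta_\env$ of $G$ on $C_\env^*(\A)$; since $\kappa_\A$ is equivariant for $\delta_\lambda$ and $\eps_\env$, the last part of Proposition~\ref{prop:5authors} makes it equivariant for $\delta$ and $\delta_\env$, i.e.\ $\delta_\env\circ\kappa_\A=(\kappa_\A\otimes\id_{C^*(G)})\circ\delta$, and uniqueness of $\delta_\env$ follows from uniqueness of the lift.
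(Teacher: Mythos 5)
Your proposal is correct and follows essentially the same route as the paper's proof: Katayama duality for operator algebras, the identification $C^*_\env(\A\rtimes_\delta G\rtimes^r_{\hat\delta}G)\cong C^*_\env(\A\rtimes_\delta G)\rtimes^r_{\hat\delta}G$, restriction of the double-dual coaction to the corner $\delta_\lambda(\A)\otimes P_e$, and the lift to a normal coaction via Proposition~\ref{prop:5authors}. The step you flag as the main obstacle is precisely what the paper imports from Katsoulis's crossed-product C*-envelope theorem \cite[Theorem~2.5]{Katsoulis} (cf.\ \cite{KR}), and its applicability is exactly why Lemma~\ref{lem:coaction_cai} (a contractive approximate identity for $\A\rtimes_\delta G$) is established first.
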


\begin{proof}
Let $\delta_\lambda = (\id \otimes \lambda)\circ \delta$ be the reduced coaction of $G$ on $\A$ associated with $\delta$.
By Proposition~\ref{prop:5authors}, it suffices to find a reduced coaction $\delta_{\env,r}$ on the C*-envelope $C^*_\env(\A)$ such that $\kappa_\A$ is $\delta_\lambda - \delta_{\env,r}$-equivariant. 

Let $\Psi\colon \A\rtimes_\delta G\rtimes^r_{\hat{\delta}}G\to \delta_\lambda(\A)\otimes \Kz$ be the Katayama isomorphism from Theorem~\ref{thm:Katduality},
and let $\Psi^{-1}_\env\colon C_\env^*(\delta_\lambda(\A)\otimes \Kz)\to C_\env^*(\A\rtimes_\delta G\rtimes^r_{\hat{\delta}}G)$ be the *-isomorphism induced from $\Psi^{-1}$ between the C*-envelopes.
By the co-universal property of the C*-envelope, the dual action $\hat{\delta}$ of $G$ on $\A\rtimes_\delta G$ extends to an action, also denoted $\hat{\delta}$, of $G$ on $C_\env^*(\A\rtimes_\delta G)$ (see \cite[Lemma~3.4]{KR}). By Lemma~\ref{lem:coaction_cai}, $\A\rtimes_\delta G$ has a contractive self-adjoint approximate identity, so \cite[Theorem~2.5]{Katsoulis} gives us a *-isomorphism $\theta \colon C^*_\env(\A\rtimes_\delta G \rtimes^r_{\hat{\delta}} G) \to  C^*_\env(\A\rtimes_\delta G) \rtimes^r_{\hat{\delta}} G$ that maps generators to generators. 
We summarize the above discussion in the following commutative diagram.
\begin{equation}
    \label{di:main}
\begin{tikzcd}
    C_\env^*(\delta_\lambda(\A)\otimes\Kz) \arrow[r,"\Psi^{-1}_\env","\cong"'] &  C_\env^*(\A\rtimes_\delta G\rtimes_{\hat{\delta}}^rG)\arrow[r, "\theta","\cong"'] & C_\env^*(\A\rtimes_\delta G)\rtimes_{\hat{\delta}}^r G\\  \delta_\lambda(\A)\otimes\Kz\arrow[u,hook,"\kappa_{\delta_\lambda(\A)\otimes\Kz}"] \arrow[r,"\Psi^{-1}","\cong"'] &\A\rtimes_\delta G\rtimes_{\hat{\delta}}^rG \nospacepunct{.}\arrow[u,hook,"\hat{\hat{\kappa}}"] \arrow[ur,hook,"\varphi"]&
\end{tikzcd}
\end{equation}
Here, $\kappa_{\delta_\lambda(\A)\otimes\Kz}$ and $\hat{\hat{\kappa}}$ are the canonical completely isometric inclusions into the C*-envelopes,
and $\varphi \coloneq \theta \circ \hat{\hat{\kappa}}$.

Let $\hat{\hat{\delta}}\colon G \downtouparrow \A \rtimes_\delta G \rtimes^r_{\hat{\delta}} G$ denote the canonical reduced coaction of $G$ on the double crossed product, and let $\eps\colon G \downtouparrow C_\env^*(\A\rtimes_\delta G)\rtimes^r_{\hat{\delta}}G$ denote the canonical reduced coaction on the C*-crossed product. 
As $\varphi$ maps generators to generators, it is $\hat{\hat{\delta}} - \eps$-equivariant.
Let $\tilde{\delta}$ denote the reduced coaction of $G$ on $\delta_\lambda(\A) \otimes \Kz$ from \eqref{eqn:tildedelta} in Theorem~\ref{thm:Katduality}. 
Define $\tilde\delta_\env\colon G\coacts C^*_{\env}(\delta_\lambda(\A)\otimes \Kz)$ as the reduced coaction given by $\tilde\delta_\env\coloneq (\Psi_\env\circ\theta^{-1}\otimes\id_{C_\lambda^*(G)})\circ\eps\circ \theta \circ (\Psi^{-1})_\env$. Here, we used that $(\Psi^{-1}_\env)^{-1}=\Psi_\env$. 
By commutativity of the diagram \eqref{di:main} and equivariance of $\varphi$ and $\Psi$, we have 
\begin{align*}
\eps\circ \theta \circ \Psi^{-1}_\env\circ \kappa_{\delta_\lambda(\A)\otimes\Kz} 
=([\theta\circ (\Psi^{-1})_{\env}\circ\kappa_{\delta_\lambda(\A)\otimes\Kz}]\otimes\id_{C_\lambda^*(G)})\circ\tilde\delta,
\end{align*}
so $\tilde\delta_\env\circ\kappa_{\delta_\lambda(\A)\otimes\Kz}=(\kappa_{\delta_\lambda(\A)\otimes\Kz}\otimes\id_{C_\lambda^*(G)})\circ\tilde\delta$, i.e.,  $\kappa_{\delta_{\lambda}(\A) \otimes \Kz}$ is $\tilde\delta - \tilde\delta_\env$-equivariant.

Next, let $P_e$ denote the rank-one projection in $\Kz$ onto the subspace spanned by the point-mass function at the identity element $e$ of $G$. We now show that $\delta_\lambda(\A)\otimes P_e$ is invariant under $\tilde\delta$. For this, it suffices to check that $\tilde\delta(a \otimes P_e)\in \delta_\lambda(\A)\otimes P_e\otimes C^*_\lambda(G)$ for all $a \in \A_g$ and $g\in G$ (recall the explicit definition of $\tilde{\delta}$ from the statement of Theorem~\ref{thm:Katduality}).
For all $a \in \A_g$ and $g\in G$, we have
\begin{align*}
    \tilde{\delta}(\delta_\lambda(a)\otimes P_e)
    &=(I_\cH\otimes I_{\ell^2(G)}\otimes U^*)[\delta_\lambda(a)\otimes P_e \otimes \lambda_g](I_\cH\otimes I_{\ell^2(G)}\otimes U)\\
    &=\delta_\lambda(a)\otimes U^*(P_e\otimes \lambda_g)U\\
    &=\delta_\lambda(a)\otimes P_e\otimes \lambda_g.
\end{align*}
Let $\tilde\delta'$ denote the restriction of $\tilde\delta$ to $\delta_\lambda(\A)\otimes P_e$. 

By \cite[Corollary~2.7]{DEG}, there is a *-isomorphism $C^*_\env(\delta_\lambda(\A) \otimes \Kz) \to C^*_\env(\delta_\lambda(\A)) \otimes \Kz$ given by $\kappa_{\delta_\lambda(\A)}(\delta_\lambda(a))\otimes K \mapsto \kappa_{\delta_\lambda(\A)\otimes\Kz}(\delta_\lambda(a)\otimes K)$ for all $a\in\A$ and $K\in\Kz$.
A similar argument as above now shows that the C*-subalgebra $C^*_\env(\delta_\lambda(\A)) \otimes P_e$ of $C^*_\env(\delta_\lambda(\A) ) \otimes \Kz \cong C^*_\env(\delta_\lambda(\A) \otimes \Kz)$ is $\tilde\delta_{\env}$-invariant.
Let $\tilde\delta_\env'$ denote the restriction of $\tilde\delta_\env$ on $C^*_\env(\delta_\lambda(\A))\otimes P_e$.
It now follows that the inclusion $\kappa_{\delta_\lambda(\A)\otimes \id}$ is $\tilde{\delta}' - \tilde{\delta}_\env'$-equivariant.
This is summarized in the right-most part of the diagram below.
\begin{equation*}
\begin{tikzcd}[sep=small]
    C_\env^*(\A) \arrow[r,"\alpha_\env"] &  C_\env^*(\delta_\lambda(\A)) \otimes P_e \arrow[r,hook,"\textrm{incl}"] & C_\env^*(\delta_\lambda(\A))\otimes \Kz \arrow[r,"\cong"] &  C_\env^*(\delta_\lambda(\A)\otimes \Kz) \\
    \A \arrow[r,"\alpha"] \arrow[u, "\kappa_\A"] & \delta_\lambda(A)\otimes P_e \arrow[u,"\kappa_{\A\otimes \id}"] \arrow[rr,hook,"\textrm{incl}"] & & \delta_\lambda(A)\otimes \Kz \arrow[u,"\kappa_{\delta_{\lambda(\A)}\otimes \Kz}"]\nospacepunct{.}
\end{tikzcd}
\end{equation*}
The map $\alpha\colon \A \to \delta_\lambda(\A) \otimes P_e$ given by $\alpha(a) = \delta_\lambda(a)\otimes P_e$ for all $a\in \A$ is a completely isometric isomorphism. 
For $g\in G$ and $a\in A_g$, we have 
\[
\tilde{\delta}'\circ\alpha(a)=\tilde{\delta}'(\delta_\lambda(a)\otimes P_e)=\delta_\lambda(a)\otimes P_e\otimes \lambda_g=(\alpha\otimes\id)(a\otimes\lambda_g)=(\alpha\otimes\id)\delta_\lambda(a)
\]
which shows that $\alpha$ is $\delta_{\lambda}-\tilde\delta'$-equivariant. 

Finally, let $\alpha_\env\colon C^*_\env(\A) \to C^*_\env(\delta_\lambda(\A))\otimes P_e \cong C^*_\env(\delta_\lambda(\A) \otimes P_e)$ be the composition of the canonical *-isomorphism $C^*_\env(\A) \cong C^*_\env(\delta_\lambda(\A))\otimes P_e$ and the *-isomorphism between the C*-envelopes induced from $\alpha$, and consider the reduced coaction $\delta_{\env,r}\coloneq(\alpha_{\env}^{-1} \otimes \id) \circ \tilde{\delta}' \circ \alpha_{\env}$ of $G$ on $C^*_\env(\A)$. 
Using commutativity of the above diagram and that $\kappa_{\delta_\lambda(\A)\otimes \id}$ and $\alpha$ are equivariant, it follows that
\begin{align*}
    \delta_{\env,r}\circ\kappa_\A =(\alpha_{\env}^{-1} \otimes \id) \circ \tilde{\delta}' \circ\kappa_{\delta_\lambda(\A)\otimes \id}\circ\alpha = \\
    (\kappa_{\A}\circ\alpha^{-1}\otimes \id) \circ (\alpha\otimes\id)\circ \delta_\lambda 
    =(\kappa_{\A}\otimes\id)\circ\delta_\lambda,
\end{align*}
i.e., $\kappa_\A$ is $\delta_\lambda-\delta_{\env,r}$-equivariant. 
This means that $C^*_\env(\A)$ admits a reduced coaction $\delta_{\env,r}$ which extends the reduced coaction $\delta_\lambda$ on $\A$.
\end{proof}

Below, $C_\env^*(\A,\delta)$ denotes the equivariant C*-envelope for the coaction $\delta\colon G\coacts\A$, as defined in \cite[Definition~3.7]{DKKLL}. As a corollary to Theorem~\ref{thm:main}, we answer the question posed by Sehnem in \cite[Remark~5.2]{Seh22} for all normal coactions, showing that the notion of equivariant C*-envelope of a normal coaction is superfluous.

\begin{corollary}
\label{cor:main}
    Let $\A$ be an operator algebra with a contractive self-adjoint approximate identity, and suppose $\delta\colon G \downtouparrow\A$ is a normal coaction of a discrete group $G$.
    Then, the canonical map $C_\env^*(\A,\delta)\to C_\env^*(\A)$ is a *-isomorphism.
\end{corollary}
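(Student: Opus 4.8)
The plan is to deduce Corollary~\ref{cor:main} directly from Theorem~\ref{thm:main} using the universal property of the equivariant C*-envelope $C_\env^*(\A,\delta)$ as recorded in \cite[Definition~3.7]{DKKLL}. Recall that $C_\env^*(\A,\delta)$ is, by construction, the smallest C*-cover $(\iota, \cB)$ of $\A$ that carries a coaction $\beta\colon G\coacts\cB$ for which $\iota$ is $\delta-\beta$-equivariant; it comes with a canonical surjective $G$-equivariant *-homomorphism $C_\env^*(\A,\delta)\to C_\env^*(\A)$ arising from the co-universal property of the plain C*-envelope (the point being that $C_\env^*(\A)$ is \emph{a priori} a quotient of $C_\env^*(\A,\delta)$, and the content of the corollary is that this quotient map is injective).

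First I would invoke Theorem~\ref{thm:main}: since $\A$ has a contractive approximate identity and $\delta$ is a normal coaction, there is a normal coaction $\delta_\env\colon G\coacts C_\env^*(\A)$ with $\delta_\env\circ\kappa_\A = (\kappa_\A\otimes\id_{C^*(G)})\circ\delta$. Thus the pair $(\kappa_\A, C_\env^*(\A))$ is itself a C*-cover of $\A$ equipped with a compatible coaction of $G$, i.e., it is an object in the category whose initial object is $C_\env^*(\A,\delta)$. By the universal property of the equivariant C*-envelope, there is a (necessarily unique, surjective, $G$-equivariant) *-homomorphism $\pi\colon C_\env^*(\A,\delta)\to C_\env^*(\A)$ with $\pi\circ \iota = \kappa_\A$, where $\iota\colon\A\to C_\env^*(\A,\delta)$ is the canonical inclusion. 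This is precisely the canonical map named in the statement.

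Next I would produce an inverse. Since $C_\env^*(\A,\delta)$ is in particular \emph{a} C*-cover of $\A$ (forgetting the coaction), the co-universal property of the ordinary C*-envelope $C_\env^*(\A)$ supplies a surjective *-homomorphism $q\colon C_\env^*(\A,\delta)\to C_\env^*(\A)$ with $q\circ\iota=\kappa_\A$ on $\A$ --- but this is the very same map as $\pi$ by uniqueness, so what I actually want is a map the other way. Here the key observation is that $\pi$ is injective: on the dense subalgebra $\iota(\A)\subseteq C_\env^*(\A,\delta)$ the map $\pi$ agrees with $\kappa_\A$, which is completely isometric. To promote this to injectivity on all of $C_\env^*(\A,\delta)$, I would use that $\pi$ is $G$-equivariant for the coactions: $\ker\pi$ is a coaction-invariant ideal that meets $\iota(\A)$ trivially. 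One then argues that a coaction-invariant ideal of a C*-cover of $\A$ which intersects (the image of) $\A$ trivially must be zero --- this is exactly the kind of "boundary ideal" reasoning that makes $C_\env^*(\A,\delta)$ the \emph{smallest} equivariant C*-cover: if $\ker\pi\neq 0$, then $C_\env^*(\A,\delta)/\ker\pi$ would be a strictly smaller equivariant C*-cover of $\A$, contradicting minimality. Hence $\ker\pi=0$ and $\pi$ is a *-isomorphism.

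The main obstacle, and the step requiring care, is the last one: verifying that the quotient $C_\env^*(\A,\delta)/\ker\pi$ is again an \emph{equivariant} C*-cover in the precise sense of \cite[Definition~3.7]{DKKLL} --- one needs the coaction on $C_\env^*(\A,\delta)$ to descend to the quotient (automatic since $\ker\pi$ is coaction-invariant) and one needs the composition $\A\xrightarrow{\iota}C_\env^*(\A,\delta)\twoheadrightarrow C_\env^*(\A,\delta)/\ker\pi$ to remain completely isometric (which holds because it equals $\kappa_\A$ under the identification $C_\env^*(\A,\delta)/\ker\pi\cong C_\env^*(\A)$ induced by $\pi$). Granting the minimality clause of the definition of the equivariant C*-envelope, this forces $\ker\pi=0$. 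If one prefers to avoid re-deriving minimality, an alternative is to construct $q\colon C_\env^*(\A)\to C_\env^*(\A,\delta)$ directly by composing the co-universal map of $C_\env^*(\A)$ (applied to the C*-cover $C_\env^*(\A,\delta)$) and checking $q\circ\pi=\id$ and $\pi\circ q=\id$ on the respective dense copies of $\A$; either route closes the argument.
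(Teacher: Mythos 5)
Your overall strategy is the intended one---the paper offers no separate proof of Corollary~\ref{cor:main}, treating it as immediate from Theorem~\ref{thm:main} together with the two co-universal properties---and your argument is correct in substance, but you have the direction of one universal property backwards, and this is what forces you into the unnecessary boundary-ideal detour. In \cite[Definition~3.7]{DKKLL} the equivariant C*-envelope is \emph{co}-universal: every C*-cover of $\A$ carrying a compatible coaction admits a canonical equivariant surjection \emph{onto} $C_\env^*(\A,\delta)$, not out of it. So once Theorem~\ref{thm:main} equips $(\kappa_\A, C_\env^*(\A))$ with the compatible normal coaction $\delta_\env$, the co-universal property of $C_\env^*(\A,\delta)$ hands you a surjection $\sigma\colon C_\env^*(\A)\to C_\env^*(\A,\delta)$ with $\sigma\circ\kappa_\A=\iota$---which is precisely the inverse you say you are missing. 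The canonical map $\pi\colon C_\env^*(\A,\delta)\to C_\env^*(\A)$ with $\pi\circ\iota=\kappa_\A$ comes from the co-universal property of the \emph{ordinary} C*-envelope, and the composites $\pi\circ\sigma$ and $\sigma\circ\pi$ fix the respective copies of $\A$, hence are the identity by the uniqueness clauses of the two co-universal properties; this already closes the argument. The kernel argument you substitute for this is therefore superfluous, and as written it also glosses a point: to say the coaction ``descends'' to $C_\env^*(\A,\delta)/\ker\pi$ you need $\beta(\ker\pi)\subseteq\ker(\pi\otimes\id_{C^*(G)})$ (equivariance of $\pi$ gives this, but $\ker(\pi\otimes\id_{C^*(G)})$ need not equal $\ker\pi\otimes C^*(G)$ for the minimal tensor product), and the resulting map on the quotient is identified with a genuine coaction only by appealing to Theorem~\ref{thm:main} again through the isomorphism with $C_\env^*(\A)$. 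The direct two-map argument avoids all of this.
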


\begin{remark} \label{r:sehnems}
Although we do not discuss product systems of C*-correspondences over group-embeddable monoids in this paper, it is worthwhile to sketch the simplified proof of \cite[Theorem 5.1]{Seh22} that we may now obtain. More precisely, we can now dispense with the context-specific techniques used in \cite[Lemma 3.7]{Seh22} and \cite[Section 4]{Seh22} by appealing to Theorem \ref{thm:main} and Corollary \ref{cor:main} combined with \cite[Corollary 3.5]{Seh22}. 

To provide a sketch of the proof, in what follows we adhere to terminology and conventions from \cite{Seh22}. Let $\mathcal{E}$ be a product system of C*-correspondences over a monoid $P$ embeddable in a group $G$.  To show that the C*-envelope of the tensor algebra $\mathcal{T}_{\lambda}^+(\mathcal{E})$ is the reduced Fell bundle C*-algebra completion of $([A \times_{\mathcal{E}} P]_g)_{g\in G}$ (which is denoted by $C^*_r(([A \times_{\mathcal{E}} P]_g)_{g\in G})$) we first note that by Theorem \ref{thm:main} we have that $C^*_\env(\mathcal{T}_{\lambda}^+(\mathcal{E}))$ carries a canonical normal coaction by $G$ induced by the natural one on $\mathcal{T}_{\lambda}^+(\mathcal{E})$. By \cite[Corollary 3.5]{Seh22} the canonical quotient $\mathcal{T}_{\lambda}(\mathcal{E}) \rightarrow C^*_r(([A \times_{\mathcal{E}} P]_g)_{g\in G})$ is completely isometric on $\mathcal{T}_{\lambda}^+(\mathcal{E})$, so that by the co-universal property of $C^*_\env(\mathcal{T}_{\lambda}^+(\mathcal{E}))$ we have a canonical quotient map $C^*_r(([A \times_{\mathcal{E}} P]_g)_{g\in G}) \rightarrow C^*_\env(\mathcal{T}_{\lambda}^+(\mathcal{E}))$. But now, since this quotient map is automatically equivariant with respect to the canonical \emph{normal} coactions on these C*-algebras, we get that this quotient map intertwines the natural \emph{faithful} conditional expectations on these two C*-algebras arising from the respective normal coactions. A standard conditional expectation argument together with \cite[Theorem 3.10]{Seh18} then shows that this canonical quotient map is injective. Corollary \ref{cor:main} then furnishes a complete proof of \cite[Theorem 5.1]{Seh22}.
\end{remark}

\section{C*-envelopes of operator algebras from categories}
In this section, we turn to left cancellative small categories as in \S~\ref{ss:prelimcats} and the C*-envelopes of their operator algebras.
Examples include left cancellative monoids, path categories of directed graphs, higher-rank graphs, and $P$-graphs.
In \S~\ref{ss:boundary-quotient}, we show that when $\fC$ is cancellative and the groupoid $I_l\ltimes \Omega$ is Hausdorff, the boundary quotient C*-algebra is a C*-cover for the operator algebra of $\fC$ (Theorem~\ref{thm:ciAr}),
and we provide sufficient conditions for when it is the C*-envelope.
This relies entirely on \'etale groupoid techniques.
In \S~\ref{ss:functors->coactions}, we show that a functor $\rho\colon \fC \to \fG$ from a left cancellative small category $\fC$ to a (discrete) groupoid $\fG$ induces a normal coaction on the operator algebra of $\fC$
and thus, by Theorem~\ref{thm:main}, a normal coaction on the C*-envelope.
In Theorem~\ref{thm:Pgraphs}, we apply this to compute the C*-envelope of the operator algebra of finitely aligned higher-rank graphs and $P$-graphs.
Finally, in \S~\ref{ss:groupoid-embeddable}, we consider the groupoid-embeddable case, where we identify the C*-envelope with the boundary quotient, thus answering Li's Question~\ref{ques:Li} for all such categories.

\subsection{The boundary quotient is a C*-cover}
\label{ss:boundary-quotient}
Throughout this subsection, we assume that $\fC$ is a \emph{cancellative} small category and that $I_l\ltimes\Omega$ is Hausdorff. 
Inside the groupoid C*-algebra $C^*_r(I_l\ltimes \Omega)$ there is a natural operator subalgebra 
\[
\A_r(\fC)\coloneq\overline{\alg}(\{1_{[c,\Omega(\mfd(c)\fC))]} : c\in \fC\})\subseteq C^*_r(I_l\ltimes \Omega).
\]
When $I_l\ltimes\Omega$ is Hausdorff, the *-isomorphism from \eqref{eqn:Jackmap} restricts to a completely isometric isomorphism between $\A_r(\fC)$ and the operator algebra $\A_\lambda(\fC)$ from Definition~\ref{def:A(fC)}.
We will show that the boundary quotient map $q_\partial\colon C_r^*(I_l\ltimes\Omega)\to C_r^*(I_l\ltimes\partial\Omega)$ from \eqref{eqn:bdryqmap} is completely isometric on $\A_r(\fC)$. Our strategy is inspired by the proof of \cite[Lemma~3.2]{DK20}. For each $\chi\in\partial\Omega$ and each subset $X\subseteq \fC$, put 
\[
[X,\chi]\coloneq\{[c,\chi] : c\in X, \chi(\mfd(c)\fC)=1\}\subseteq (I_l\ltimes\partial\Omega)_\chi.
\]
Since $\fC$ is cancellative, we have $[c,\chi]=[d,\chi]$ if and only if $c=d$. Indeed, the equality $[c,\chi]=[d,\chi]$ implies that there exists $Y\in\chi^{-1}(1)$ such that $cy =dy$ for $y\in Y$ and, by right cancellation, this implies that $c=d$. 

\begin{lemma}
For each $\chi\in\partial\Omega$, there is a *-homomorphism
    \[
        \vartheta_\chi\colon C_r^*(I_l\ltimes\Omega)\to \Bz(\ell^2([\fC,\chi]))
    \]
    determined by
    \begin{equation}
        \label{eqn:vartheta_chi}
    \vartheta_\chi(1_{[s,\Omega(\dom(s))]})e_{[d,\chi]}=\begin{cases}
        e_{[s(d),\chi]} & \text{ if } d\in\dom(s),\\
        0 & \text{ if } d\not\in\dom(s),
    \end{cases}
    \end{equation}   
    for all $s\in I_l^\times$ and $[d,\chi] \in [\fC,\chi]$.
\end{lemma}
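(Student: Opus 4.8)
The plan is to realize $\vartheta_\chi$ as a left regular representation of the \'etale groupoid $C^*$-algebra $C^*_r(I_l\ltimes\partial\Omega)$, precomposed with the boundary quotient map $q_\partial$, and then restricted to the invariant subspace $\ell^2([\fC,\chi])$ of the full regular representation space $\ell^2((I_l\ltimes\partial\Omega)_\chi)$. First I would observe that $[\fC,\chi]\subseteq(I_l\ltimes\partial\Omega)_\chi$ is a well-defined subset: by the cancellativity remark immediately preceding the lemma, the assignment $c\mapsto[c,\chi]$ is injective on $\{c\in\fC:\chi(\mfd(c)\fC)=1\}$, so $\{e_{[d,\chi]}:[d,\chi]\in[\fC,\chi]\}$ is a genuine orthonormal set indexing $\ell^2([\fC,\chi])$. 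Then I would recall that $C^*_r(I_l\ltimes\partial\Omega)$ comes equipped with the regular representation $\partial\rho_\chi$ on $\ell^2((I_l\ltimes\partial\Omega)_\chi)$, and that $q_\partial\colon C^*_r(I_l\ltimes\Omega)\to C^*_r(I_l\ltimes\partial\Omega)$ is a well-defined surjective $*$-homomorphism (this is \eqref{eqn:bdryqmap} in the excerpt). So $\partial\rho_\chi\circ q_\partial$ is already a $*$-homomorphism of $C^*_r(I_l\ltimes\Omega)$ into $\Bz(\ell^2((I_l\ltimes\partial\Omega)_\chi))$; the only work is to cut it down to $\ell^2([\fC,\chi])$ and check the formula.

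The key step is therefore to show that $\ell^2([\fC,\chi])$ is an invariant subspace for the operators $\partial\rho_\chi(q_\partial(1_{[s,\Omega(\dom(s))]}))$ with $s\in I_l^\times$, and in fact that these operators act by the partial-shift formula \eqref{eqn:vartheta_chi}. For this I would compute directly using the formula for $\partial\rho_\chi$ from \S~\ref{ss:prelimcats}: for a basic compact open bisection $[s,\Omega(\dom(s))]$ and a groupoid element $[t,\chi]=[d,\chi]$ with $d\in\fC$, the sum $\sum_{[u,d.\chi]}1_{[s,\Omega(\dom(s))]}([u,d.\chi])\,\delta_{[ud,\chi]}$ has at most one nonzero term, occurring precisely when $[u,d.\chi]$ lies in the bisection $[s,\Omega(\dom(s))]$, i.e.\ when $d.\chi(\dom(s))=1$ and $u$ agrees with $s$ near $d.\chi$; a short bookkeeping argument with the identification $d.\chi(\dom(s))=\chi(d^{-1}(\dom(s)\cap\im(d)))=\chi(\mfd(s(d))\fC)$ (using that $s\in I_l^\times$, and that $\dom(s)\cap\im(d)\neq\emptyset$ forces $s(d)$ to be defined with $s(d)\in\fC$, since $I_l$ acts on the constructible ideals of $\fC$) shows this is equivalent to $d\in\dom(s)$, and then the surviving term is $\delta_{[s(d),\chi]}$. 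Since $s(d)\in\fC$ when it is defined, the output vector is again indexed by an element of $[\fC,\chi]$, proving invariance, and the resulting action is exactly \eqref{eqn:vartheta_chi}. Because the elements $1_{[s,\Omega(\dom(s))]}$ for $s\in I_l$ span a dense subalgebra of $C^*_r(I_l\ltimes\Omega)$ by \eqref{eqn:spanning3} and \eqref{eqn:Jackmap}, and the compression of a $*$-homomorphism to a jointly invariant subspace for a dense set of generators is again a $*$-homomorphism (one checks invariance passes to the closed span and hence to the whole algebra, since the generating set is closed under adjoints $1_{[s,\Omega(\dom(s))]}^*=1_{[s^{-1},\Omega(\im(s))]}$), we conclude that $\vartheta_\chi\coloneq P_{\ell^2([\fC,\chi])}\circ(\partial\rho_\chi\circ q_\partial)(-)\vert_{\ell^2([\fC,\chi])}$ is the desired $*$-homomorphism, and it is uniquely determined by \eqref{eqn:vartheta_chi} on the dense generating set.

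I expect the main obstacle to be the careful identification of when the single potentially-nonzero term in the regular-representation sum actually survives, i.e.\ translating the groupoid-theoretic condition ``$[u,d.\chi]\in[s,\Omega(\dom(s))]$'' into the category-theoretic condition ``$d\in\dom(s)$ and $u$ represents $s$.'' This requires being careful about two subtleties: that $s$ ranges over $I_l^\times$ (nonzero elements, so that $s(d)$, when defined, is genuinely an element of $\fC$ rather than a formal symbol), and that the equivalence relation $\sim$ defining the transformation groupoid means ``$[u,d.\chi]=[s,d.\chi]$ iff $u$ and $s$ agree on a $d.\chi$-large constructible ideal'', which interacts with the partial-bijection structure. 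Once that translation is pinned down, invariance of $\ell^2([\fC,\chi])$ and the formula \eqref{eqn:vartheta_chi} both drop out, and the rest is the standard fact that compression to a jointly reducing subspace of a $*$-representation is a $*$-homomorphism.
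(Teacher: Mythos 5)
Your strategy of cutting a regular representation down to a subspace indexed by $[\fC,\chi]$ is the right instinct, but you have chosen the wrong representation, and the cut-down is neither a $*$-homomorphism nor the operator in \eqref{eqn:vartheta_chi}. The subspace $\ell^2([\fC,\chi])$ is \emph{not} invariant under $\partial\rho_\chi\circ q_\partial$ applied to all generators $1_{[s,\Omega(\dom(s))]}$ with $s\in I_l^\times$: taking $s=c^{-1}$ for a non-invertible $c\in\fC$ and $d=\mft(c)\in\fC^0$ with $\chi(c\fC)=1$, the regular representation sends $\delta_{[d,\chi]}$ to $\delta_{[c^{-1},\chi]}$, and $[c^{-1},\chi]$ generally does not lie in $[\fC,\chi]$ (e.g.\ in the free monoid on two generators with $\chi=aaa\cdots$, no $c\in\fC$ agrees with $a^{-1}$ on any $\chi$-large ideal). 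The underlying error is your claimed equivalence ``$d.\chi(\dom(s))=1$ iff $d\in\dom(s)$'': one has $d.\chi(\dom(s))=\chi(\dom(sd))$, which can equal $1$ while $d\notin\dom(s)$, exactly as in this example. Worse, the compression $P\,\partial\rho_\chi(q_\partial(\cdot))\,P$ does not satisfy \eqref{eqn:vartheta_chi} even where it lands back in $\ell^2([\fC,\chi])$: for $\fC=\Nz$, $s=2\cdot 1^{-1}\in I_l^\times$ (domain $1+\Nz$, $1+x\mapsto 2+x$) and $d=0$, the element $sd$ agrees with $1\in\Nz$ on the boundary, so the compressed operator sends $e_{[0,\chi]}$ to $e_{[1,\chi]}\neq 0$, whereas \eqref{eqn:vartheta_chi} demands $0$ because $0\notin\dom(s)$. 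This is precisely the failure of the Toeplitz compression $C(\Tz)\to\Bz(\ell^2(\Nz))$ to be multiplicative, so there is no repair within your framework: the target operator genuinely differs from the compression of the boundary regular representation.

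The paper instead compresses the \emph{category's} left regular representation on $\ell^2(\fC)$. The set $\fC_\chi=\{c\in\fC:\chi(\mfd(c)\fC)=1\}$ is $I_l$-invariant, because elements of $I_l$ act by left multiplications and their inverses and hence preserve $\mfd$, i.e.\ $\mfd(s(x))=\mfd(x)$ for $x\in\dom(s)$; consequently $\ell^2(\fC_\chi)\subseteq\ell^2(\fC)$ is genuinely \emph{reducing} for $C^*_\lambda(\fC)$, the compression $\mu$ is a $*$-homomorphism, and $\vartheta_\chi\coloneq \Ad(U)\circ\mu\circ\mfj$, with $U e_c=e_{[c,\chi]}$ the unitary induced by the bijection $\fC_\chi\to[\fC,\chi]$ and $\mfj$ the isomorphism from \eqref{eqn:Jackmap}, satisfies \eqref{eqn:vartheta_chi} by direct computation. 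If you want to salvage your route, you must replace the left-multiplication action of $I_l$ on $(I_l\ltimes\partial\Omega)_\chi$ by the action of $I_l$ on $\fC$ by partial bijections — which is exactly the paper's construction.
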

\begin{proof}
For $\chi\in\partial\Omega$, let $\fC_\chi \coloneq \{c\in \fC : \chi(\mfd(c)\fC)=1\}$. It is easy to verify directly that $\fC_\chi$ is an $I_l$-invariant subset of $\fC$, so that $\ell^2(\fC_\chi)\subseteq \ell^2(\fC)$ is a $C_\lambda^*(\fC)$-reducing subspace. Denote by $\mu$ the associated representation of $C_\lambda^*(\fC)$ on $\ell^2(\fC_\chi)$, and let $U\colon \ell^2(\fC_\chi)\to \ell^2([\fC,\chi])$ be the unitary given by $Ue_c = e_{[c,\chi]}$ for all $c\in \fC_\chi$. 
Define $\vartheta_\chi\colon C_r^*(I_l\ltimes\Omega)\to \Bz(\ell^2([\fC,\chi]))$ by $\vartheta_\chi(a) \coloneq U\mu(\mfj(a)) U^*$, where $\mfj$ is the *-isomorphism from \eqref{eqn:Jackmap}. 
A straightforward computation shows that $\vartheta_\chi$ satisfies \eqref{eqn:vartheta_chi}.
\end{proof}

If $\fC$ is a cancellative \emph{monoid}, then $\vartheta_\chi$ is unitarily equivalent to the map $\mfj$ from \eqref{eqn:Jackmap}, and is therefore injective. In general, we only get injectivity by taking the sum of all such representations.

\begin{lemma}
\label{lem:vartheta}
The *-homomorphism
\[
\vartheta\coloneq\oplus_{\chi\in\partial\Omega}\vartheta_\chi\colon C_r^*(I_l\ltimes\Omega)\to \Bz\left(\textstyle{\bigoplus}_{\chi\in\partial\Omega}\ell^2([\fC,\chi])\right)
\]
is injective.
\end{lemma}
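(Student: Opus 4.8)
The plan is to show that $\vartheta = \bigoplus_{\chi \in \partial\Omega} \vartheta_\chi$ has trivial kernel by exploiting the structure of $\partial\Omega$ as a dense-orbit or at least suitably large closed invariant subset, together with the fact that each $\vartheta_\chi$ encodes the left regular representation of $C^*_r(I_l \ltimes \Omega) \cong C^*_\lambda(\fC)$ restricted to an invariant subspace. First I would observe that $\ker\vartheta$ is a closed two-sided ideal of $C^*_r(I_l \ltimes \Omega)$, and since this C*-algebra is (via $\mfj$) isomorphic to the reduced groupoid C*-algebra of the \'etale groupoid $I_l \ltimes \Omega$, ideals are controlled by open invariant subsets of the unit space $\Omega$; more precisely, an element $a$ with $\widehat a$ supported on the complement of the union $\bigcup_{\chi\in\partial\Omega}$ of the orbits meeting $\partial\Omega$ would have to lie in $\ker\vartheta$, but conversely any $a \in \ker\vartheta$ must vanish on all of $\partial\Omega$. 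The key point is that $\partial\Omega$ is \emph{not} merely a closed invariant subset — it contains $\Omega_{\max}$ and its closure — and that $\partial\Omega$ meets every orbit in $\Omega$, or at least the orbits of elements of $\fC$ suffice.

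More concretely, here is the route I would carry out. Step one: reduce the injectivity of $\vartheta$ to a statement about $C^*_r(I_l\ltimes\Omega) \cong C^*_\lambda(\fC)$ acting by the restrictions $\mu_\chi$ on $\ell^2(\fC_\chi)$, using the unitary $U$ from the previous lemma so that $\vartheta_\chi = \Ad(U) \circ \mu_\chi$. Step two: note that $\bigoplus_{\chi\in\partial\Omega}\mu_\chi$ is the restriction of the regular representation of $C^*_\lambda(\fC)$ on $\ell^2(\fC)$ to the reducing subspace $\bigoplus_{\chi\in\partial\Omega}\ell^2(\fC_\chi)$, so it suffices to show that $\bigcup_{\chi\in\partial\Omega}\fC_\chi = \fC$, i.e., that \emph{every} object/morphism $c\in\fC$ satisfies $\chi(\mfd(c)\fC) = 1$ for some $\chi\in\partial\Omega$. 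Step three: fix $c\in\fC$ and its domain object $u = \mfd(c)$; the principal right ideal $u\fC = \mfd(c)\fC$ is a nonempty constructible right ideal, so the set $\Omega(u\fC)$ of characters $\chi$ with $\chi(u\fC)=1$ is a nonempty compact open subset of $\Omega$. Step four: invoke that $\partial\Omega = \overline{\Omega_{\max}}$ meets every nonempty compact open subset of the form $\Omega(X)$ for $X\in\cJ^\times$ — this follows because every character is dominated by (extends to) a maximal character, so $\Omega(X)\cap\Omega_{\max}\neq\emptyset$ whenever $\Omega(X)\neq\emptyset$, hence $\Omega(u\fC)\cap\partial\Omega \neq\emptyset$. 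Pick $\chi$ in this intersection; then $c\in\fC_\chi$, completing step two's hypothesis and hence the proof.

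The step I expect to be the main obstacle is step four: establishing that $\partial\Omega$ genuinely intersects $\Omega(X)$ for every nonempty constructible right ideal $X$. This is essentially the statement that every nonempty constructible right ideal survives to the boundary, and it needs the right characterization of maximal characters — one must check that an arbitrary character $\chi_0$ with $\chi_0(X)=1$ can be enlarged (via Zorn's lemma on $\{\gamma^{-1}(1)\}$ ordered by inclusion, among characters containing $X$) to a maximal character still containing $X$, and that this maximal character lies in $\Omega$. The subtlety is that the union-splitting condition defining $\Omega$ and the tightness condition on $\partial\Omega$ interact with maximality; one should cite the relevant facts recalled in the preliminaries, namely that every maximal character lies in $\Omega$ and that $\partial\Omega=\overline{\Omega_{\max}}$, and that basic sets $\partial\Omega(X)$ for $X\in\cJ^\times$ are nonempty (this last being precisely what requires $X\neq\emptyset$ and the non-triviality that there is no cover of $X$ by the empty set). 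Once this density/intersection fact is in hand, the rest is the routine bookkeeping of steps one through three, which I would present compactly using the reducing-subspace observation rather than re-deriving the formula \eqref{eqn:vartheta_chi}.
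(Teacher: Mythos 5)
Your proof is correct, but it takes a genuinely different and in fact more economical route than the paper. The paper proves injectivity by intertwining $\vartheta$ with the faithful conditional expectations $E_r\colon C_r^*(I_l\ltimes\Omega)\to C_0(\Omega)$ and $\prod_\chi\Phi_\chi$, verifying commutativity of the resulting square on spanning elements via an inclusion--exclusion computation that explicitly invokes Hausdorffness (to write $\fix(s)$ as a finite union of constructible ideals), and then analysing $\ker(\vartheta\vert_{C_0(\Omega)})$ using a description of that kernel from Cuntz--Echterhoff--Li; the final step chooses, for each $x\in X$, a maximal character $\chi$ with $\chi(\mfd(x)\fC)=1$ and uses right cancellation. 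You instead observe that, by construction, each $\vartheta_\chi$ is unitarily equivalent to the restriction of the identical representation of $C_\lambda^*(\fC)\subseteq \Bz(\ell^2(\fC))$ to the reducing subspace $\ell^2(\fC_\chi)$, so that $\ker\vartheta=\{0\}$ as soon as $\bigcup_{\chi\in\partial\Omega}\fC_\chi=\fC$, and this last fact follows from exactly the same input the paper uses at the end (every nonempty constructible ideal $\mfd(c)\fC$ is charged by a maximal character, which lies in $\Omega_{\max}\subseteq\partial\Omega$; this is \cite[Lemma~2.21(ii)]{LiGarI}, or your Zorn's lemma argument on filters). Your argument bypasses the expectation diagram, the inclusion--exclusion step, and the kernel description entirely, and is a legitimate simplification; the only caveats are minor. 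First, the heuristic in your opening paragraph --- that ideals of $C_r^*(I_l\ltimes\Omega)$ are controlled by open invariant subsets of $\Omega$ --- is false for general reduced groupoid C*-algebras and could not stand as a proof on its own, but your concrete steps one through four do not rely on it. Second, you assert that $\Omega(\mfd(c)\fC)$ is nonempty slightly before you prove it; nonemptiness is itself delivered by the maximal character you produce in step four, so the order of quantifiers should be arranged accordingly.
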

\begin{proof}
Consider the diagram 
  \begin{equation}
      \label{diag:expecting}
  \begin{tikzcd}
      C_r^*(I_l\ltimes\Omega)\arrow[d,"E_r"]\arrow[r,"\vartheta"] & \Bz\left(\bigoplus_{\chi\in\partial\Omega}\ell^2([\fC,\chi])\right) \arrow[d,"\prod_\chi\Phi_\chi"]\\
      C_0(\Omega) \arrow[r,"\vartheta\vert_{C_0(\Omega)}"]& \prod_{\chi\in\partial\Omega}\ell^\infty([\fC,\chi])\nospacepunct{,}
  \end{tikzcd}
  \end{equation}
  where $E_r\colon C_r^*(I_l\ltimes\Omega)\to C_0(\Omega)$ and $\Phi_\chi\colon \Bz(\ell^2([\fC,\chi]))\to\ell^\infty([\fC,\chi])$ for all $\chi \in \partial \Omega$ are the canonical faithful conditional expectations. We claim that diagram \eqref{diag:expecting} commutes. It is enough to check this on the spanning elements $1_{[s,\Omega(\dom(s))]}$ for all $s\in I_l^\times$. To show that, fix $\chi\in\partial\Omega$, and note that we have 
  \[
  \Phi_\chi(\vartheta_\chi(1_{[s,\Omega(\dom(s))]}))=1_{[\fix(s),\chi]}, 
  \]
  where $\fix(s)\coloneq\{x\in\dom(s) : s(x)=x\}$. On the other hand, we have that  $E_r(1_{[s,\Omega(\dom(s))]}))=1_{[s,\Omega(\dom(s))]\cap \Omega}$. By \cite[Proposition~3.14]{ExelPardo}, we get $[s,\Omega(\dom(s))]\cap \Omega=\cF_s$ where we define $\cF_s \coloneq \bigcup_{X\in \cJ,X\subseteq \fix(s)}\Omega(X)$. 
Since $I_l\ltimes\Omega$ is Hausdorff, \cite[Lemma 4.1(i)]{LiGarI} implies that there exists a finite (possibly empty) subset $F\subseteq\cJ^\times$ such that $\fix(s)=\bigcup_{X\in F}X$, so that $\cF_s=\bigcup_{X\in F}\Omega(X)$.
By the inclusion-exclusion principle, we have that
\[
1_{\cF_s}=1_{\bigcup_{X\in F}\Omega(X)}=\textstyle{\sum}_{\emptyset\neq A\subseteq F}(-1)^{|A|-1}\textstyle{\prod}_{X\in A}1_{\Omega(X)}.
\]
Since $\vartheta_\chi(1_{\Omega(X)})=1_{[X,\chi]}$, this gives us 
\[
\vartheta_\chi(1_{\cF_s})=
\textstyle{\sum}_{\emptyset\neq A\subseteq F}(-1)^{|A|-1}\textstyle{\prod}_{X\in A}\vartheta_\chi(1_{\Omega(X)})=1_{\bigcup_{X\in F}[X,\chi]}=1_{[\fix(s),\chi]},
\]
so that $\vartheta_\chi(E_r(1_{[s,\Omega(\dom(s))]})) =1_{[\fix(s),\chi]}$. Thus, diagram \eqref{diag:expecting} commutes.

Since both $E_r$ and $\prod_\chi\Phi_\chi$ are faithful, in order to prove $\vartheta$ is injective, it will suffice to prove that $\vartheta\vert_{C_0(\Omega)}$ is injective.
By \cite[Proposition 5.6.21]{CEL}, the kernel of $\vartheta\vert_{C_0(\Omega)}$ is generated by the projections $1_{\Omega(X)}-1_{\bigcup_{i=1}^n\Omega(X_i)}$, where $X,X_1,\ldots,X_n\in\cJ$ with $X_i\subseteq X$ are such that $[X,\chi]=\bigcup_{i=1}^n[X_i,\chi]$ for all $\chi\in\partial\Omega$. Thus, it will suffice to show that $\vartheta_{\chi}$ is nonzero on all such projections. Fix such a projection $1_{\Omega(X)}-1_{\bigcup_{i=1}^n\Omega(X_i)}$ and note that it is equal to zero if and only if $X = \bigcup_{i=1}^nX_i$. So we need only prove that $X = \bigcup_{i=1}^nX_i$ whenever we have $[X,\chi]=\bigcup_{i=1}^n[X_i,\chi]$ for every $\chi \in \partial \Omega$. 
Take $x\in X$, and choose $\chi\in\Omega_\max$ such that $\chi(\mfd(x)\fC)=1$ (such a $\chi$ exists by \cite[Lemma~2.21(ii)]{LiGarI}). Then, since $[x,\chi]\in [X,\chi]$ and we have the equality $[X,\chi]=\bigcup_{i=1}^n[X_i,\chi]$, we get that $[x,\chi]\in [X_i,\chi]$ for some $i$, so $x\in X_i$ by right cancellation. 
We conclude that $\vartheta\vert_{C_0(\Omega)}$ is injective.
\end{proof}

We are now ready to prove that  $C_r^*(I_l\ltimes\partial\Omega)$ is a C*-cover of $\A_r(\fC)$.

\begin{theorem}   
\label{thm:ciAr}
Let $\fC$ be a cancellative small category, and assume $I_l\ltimes\Omega$ is Hausdorff. Then, the quotient map $q_\partial\colon C_r^*(I_l\ltimes\Omega)\to C_r^*(I_l\ltimes\partial\Omega)$ from \eqref{eqn:bdryqmap} is completely isometric on $\A_r(\fC)$. 
\end{theorem}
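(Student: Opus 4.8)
The plan is to factor the boundary quotient map through the family of representations $\vartheta_\chi$ and exploit the fact that on the operator algebra $\A_r(\fC)$ these representations are built out of genuine left regular representations, so that no cancellation can occur. First I would observe that since $q_\partial$ is a surjective $*$-homomorphism it is automatically completely contractive on $\A_r(\fC)$, so the only thing to prove is the reverse inequality $\|q_\partial^{(n)}(a)\| \geq \|a\|$ for $a$ a matrix over $\A_r(\fC)$. By Lemma~\ref{lem:vartheta}, the map $\vartheta = \oplus_{\chi \in \partial\Omega} \vartheta_\chi$ on $C_r^*(I_l\ltimes\Omega)$ is injective, hence completely isometric; so it suffices to show that each $\vartheta_\chi$ factors through $q_\partial$, i.e.\ that $\vartheta_\chi$ descends to a representation $\bar{\vartheta}_\chi$ of $C_r^*(I_l\ltimes\partial\Omega)$ with $\vartheta_\chi = \bar{\vartheta}_\chi \circ q_\partial$. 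This is plausible because $\vartheta_\chi$ only ``sees'' the groupoid elements with source $\chi \in \partial\Omega$, and $\partial\Omega$ is closed and invariant; concretely, I would identify $[\fC,\chi]$ with a subset of $(I_l\ltimes\partial\Omega)_\chi$ and check that $\vartheta_\chi$ agrees with (a compression of) $\partial\rho_\chi$, the left regular representation of $C_c(I_l\ltimes\partial\Omega)$ at $\chi$, which manifestly kills $\ker q_\partial$.

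Second, with $\vartheta_\chi = \bar{\vartheta}_\chi \circ q_\partial$ established for all $\chi \in \partial\Omega$, I get $\vartheta = (\oplus_\chi \bar{\vartheta}_\chi) \circ q_\partial$ on $C_r^*(I_l\ltimes\Omega)$. Now take a matrix $a = (a_{ij})$ over $\A_r(\fC)$. Then
\[
\|a\| = \|\vartheta^{(n)}(a)\| = \|(\textstyle\oplus_\chi \bar{\vartheta}_\chi)^{(n)}(q_\partial^{(n)}(a))\| \leq \|q_\partial^{(n)}(a)\| \leq \|a\|,
\]
where the first equality is complete isometry of $\vartheta$, the middle inequality is complete contractivity of $\oplus_\chi \bar{\vartheta}_\chi$, and the last is complete contractivity of $q_\partial$. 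Hence $q_\partial$ is completely isometric on $\A_r(\fC)$. Note this argument uses the injectivity of $\vartheta$ on the \emph{whole} C*-algebra $C_r^*(I_l\ltimes\Omega)$, not just on $\A_r(\fC)$, which is exactly what Lemma~\ref{lem:vartheta} provides.

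The main obstacle, and the step deserving genuine care, is verifying that $\vartheta_\chi$ really does factor through the boundary quotient — equivalently, that $\vartheta_\chi$ annihilates $\ker q_\partial = \{f \in C_r^*(I_l\ltimes\Omega) : f|_{I_l\ltimes\partial\Omega} = 0\}$. The subtlety is that $\vartheta_\chi$ was defined via the isomorphism $\mfj$ with $C_\lambda^*(\fC)$ and the reducing subspace $\ell^2(\fC_\chi)$, with $\fC_\chi = \{c \in \fC : \chi(\mfd(c)\fC) = 1\}$, rather than intrinsically on the boundary groupoid. I would reconcile the two pictures by showing that the map $[d,\chi] \mapsto e_{[d,\chi]}$ identifies $\ell^2([\fC,\chi])$ (a subspace of $\ell^2((I_l\ltimes\partial\Omega)_\chi)$) in a way that intertwines $\vartheta_\chi$ with the compression of $\partial\rho_\chi$ to this subspace, using that $\partial\Omega$ is $I_l$-invariant so that $\chi \in \partial\Omega$ forces $s.\chi \in \partial\Omega$ for all relevant $s$. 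Because $\partial\rho_\chi$ visibly depends only on the restriction $f|_{I_l\ltimes\partial\Omega}$, it kills $\ker q_\partial$, and hence so does $\vartheta_\chi$. Once this identification is in hand the rest is the formal norm estimate above.
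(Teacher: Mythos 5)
Your architecture --- reduce everything to the injectivity of $\vartheta$ from Lemma~\ref{lem:vartheta} and then factor $\vartheta$ through $q_\partial$ --- is the right one, but the step you yourself flag as the one ``deserving genuine care'' fails as stated: it is \emph{not} true that $\vartheta_\chi$ descends to a representation $\bar{\vartheta}_\chi$ of $C_r^*(I_l\ltimes\partial\Omega)$ with $\vartheta_\chi=\bar{\vartheta}_\chi\circ q_\partial$, i.e., $\vartheta_\chi$ does not annihilate $\ker q_\partial$. Take $\fC=\Nz$. For a cancellative monoid one has $\fC_\chi=\fC$, so $\vartheta_\chi$ is unitarily equivalent to the isomorphism $\mfj$ of \eqref{eqn:Jackmap} and is therefore \emph{injective} on $C_r^*(I_l\ltimes\Omega)\cong\cT$ (the Toeplitz algebra), whereas $\ker q_\partial\cong\Kz\neq 0$ since the boundary quotient is $C(\Tz)$. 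The source of the error is that $\ell^2([\fC,\chi])$ is an invariant but \emph{not reducing} subspace for $\rho_\chi$: the compression $P_\chi\rho_\chi(\cdot)P_\chi$ is completely contractive but not multiplicative on all of $C_r^*(I_l\ltimes\Omega)$, and it agrees with the *-homomorphism $\vartheta_\chi$ only on the nonselfadjoint algebra $\A_r(\fC)$. Already on the diagonal they differ: in the example above, $1_{[1,\Omega]}1_{[1,\Omega]}^*=1_{\Omega(1+\Nz)}$ is sent by $\vartheta_{\chi}$ to the range projection $SS^*$ of the unilateral shift, while $\rho_{\chi}(1_{\Omega(1+\Nz)})$ is the identity operator and so is its compression. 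So the sentence ``$\partial\rho_\chi$ visibly depends only on $f\vert_{I_l\ltimes\partial\Omega}$, hence kills $\ker q_\partial$, and hence so does $\vartheta_\chi$'' does not follow, because $\vartheta_\chi$ agrees with that compression only on a subalgebra.

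The repair is to demote $\bar{\vartheta}_\chi$ from a representation to the completely contractive map $P_\chi\,\partial\rho_\chi(\cdot)\,P_\chi$ and to claim the identity $\vartheta_\chi(a)=P_\chi\rho_\chi(a)P_\chi$ only for $a\in\A_r(\fC)$; this follows from the generator computation showing that $\ell^2([\fC,\chi])$ is $\rho_\chi(\A_r(\fC))$-invariant, and invariance also makes the compression multiplicative there. Since $(I_l\ltimes\Omega)_\chi=(I_l\ltimes\partial\Omega)_\chi$ for $\chi\in\partial\Omega$, one has $q_\partial=\oplus_{\chi\in\partial\Omega}\rho_\chi$, so for a matrix $a$ over $\A_r(\fC)$ your norm chain survives verbatim with the middle map read as a compression:
\[
\|a\|=\|\vartheta^{(n)}(a)\|=\bigl\|\bigl(\textstyle\oplus_\chi P_\chi\bigr)^{(n)}q_\partial^{(n)}(a)\bigl(\textstyle\oplus_\chi P_\chi\bigr)^{(n)}\bigr\|\leq\|q_\partial^{(n)}(a)\|\leq\|a\|.
\]
This is exactly the proof in the paper. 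As written, however, your claimed factorization of $\vartheta_\chi$ through the boundary quotient as a *-representation is a genuine gap, refuted by the simplest example.
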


\begin{proof}
Since $(I_l\ltimes\Omega)_\chi=(I_l\ltimes\partial\Omega)_\chi$ for all $\chi\in\partial\Omega$, we have $q_\partial = \oplus_{\chi\in\partial\Omega}\rho_\chi$.
For each $\chi\in\partial\Omega$, let $P_\chi\colon \ell^2((I_l\ltimes\Omega)_\chi))\to \ell^2([\fC,\chi])$ be the orthogonal projection associated with the inclusion $[\fC, \chi] \subseteq (I_l\ltimes\Omega)_\chi$. 
A straightforward calculation shows that 
  \begin{equation}
  \label{eqn:rhochi}
  \rho_\chi(1_{[c,\Omega(\mfd(c)\fC)]})e_{[d,\chi]}=\begin{cases}
      e_{[cd,\chi]} & \text{ if } \mfd(c)=\mft(d),\\
     0 & \text{ if }\mfd(c)\neq \mft(d),
  \end{cases}
  \end{equation}
  for all $c\in\fC$ and $[d,\chi]\in [\fC,\chi]$. Hence, the subspace $\ell^2([\fC,\chi])$ is invariant under $\rho_\chi(1_{[c,\Omega(\mfd(c)\fC)]})$, and $P_\chi\rho_\chi(1_{[c,\Omega(\mfd(c)\fC)]})P_\chi=\vartheta_\chi(1_{[c,\Omega(\mfd(c)\fC)]})$. It follows that $\bigoplus_{\chi\in\partial\Omega}\ell^2([\fC,\chi])$ is invariant under $\A_r(\fC)$, so we obtain an algebra homomorphism 
  $\psi\colon \A_r(\fC)\to \Bz\left(\textstyle{\bigoplus}_{\chi\in\partial\Omega}\ell^2([\fC,\chi])\right)$ 
  given by
  \[
  1_{[c,\Omega(\mfd(c)\fC))]}\mapsto \textstyle{\bigoplus}_{\chi\in\partial\Omega}P_\chi\rho_\chi(1_{[c,\Omega(\mfd(c)\fC)]})P_\chi,
  \]
  for all $c\in \fC$.
  The injective *-homomorphism $\vartheta$ from Lemma~\ref{lem:vartheta} satisfies $\vartheta(1_{[c,\Omega(\mfd(c)\fC)]})=\psi(1_{[c,\Omega(\mfd(c)\fC))]})$ for all $c\in\fC$, and this implies that $\psi$ is completely isometric. Since $\psi$ is the compression of $q_\partial$ by the projection $\oplus_{\chi\in\partial\Omega}P_\chi$, we see that $q_\partial$ is also completely isometric.
  \end{proof}

The general strategy for the proof of Theorem~\ref{thm:ciAr} is inspired by \cite[Lemma~3.2]{DK20}, but the proof differs significantly at the technical level and is in the language of groupoids rather than product systems. Additionally, we do not have an analogue of Fowler's Theorem (\cite[Theorem~7.2]{Fow02}), which played a crucial role in the proof of \cite[Lemma~3.2]{DK20}. This is why we needed to establish the faithfulness result in Lemma~\ref{lem:vartheta}.

For submonoids of groups, Theorem~\ref{thm:ciAr} provides a new approach to proving that the boundary quotient C*-algebra is a C*-cover. 
Moreover, Theorem~\ref{thm:ciAr} covers new examples of monoids, e.g., every cancellative monoid whose groupoid is Hausdorff. 
This class includes, e.g., all cancellative finitely aligned monoids. 
Even the class of singly aligned (right LCM) cancellative monoids contains interesting examples that are not group-embeddable (see Section \ref{s:right-LCM} below).

The co-universal property of the C*-envelope now provides us with the following corollary.

\begin{corollary}
\label{cor:lcscC*cover}
Let $\fC$ be a cancellative small category, and assume $I_l\ltimes\Omega$ is Hausdorff.
Then, there exists a surjective *-homomorphism 
    \begin{equation}
\label{eqn:pi}
    \pi_\env\colon C_r^*(I_l\ltimes\partial\Omega)\to C_\env^*(\A_r(\fC))
    \end{equation}
    such that $\pi_\env(1_{[c,\partial\Omega(\mfd(c)\fC))]})=1_{[c,\Omega(\mfd(c)\fC))]}$ for all $c\in \fC$.
\end{corollary}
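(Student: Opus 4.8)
The plan is to exhibit $C_r^*(I_l\ltimes\partial\Omega)$, together with the restriction of $q_\partial$, as a C*-cover of $\A_r(\fC)$, and then apply the co-universal property of the C*-envelope.

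First I would check that $\bigl(q_\partial|_{\A_r(\fC)}, C_r^*(I_l\ltimes\partial\Omega)\bigr)$ is indeed a C*-cover of $\A_r(\fC)$. Theorem~\ref{thm:ciAr} already gives that $q_\partial$ is completely isometric on $\A_r(\fC)$, so $q_\partial|_{\A_r(\fC)}$ is a completely isometric homomorphism. It remains to see that $q_\partial(\A_r(\fC))$ generates $C_r^*(I_l\ltimes\partial\Omega)$ as a C*-algebra. Using the isomorphism $\mfj$ from \eqref{eqn:Jackmap}, which carries $\A_r(\fC)$ onto $\A_\lambda(\fC)$ and $C_r^*(I_l\ltimes\Omega)$ onto $C_\lambda^*(\fC)$, together with Definitions~\ref{def:Toeplitz} and \ref{def:A(fC)}, one has $C^*(\A_r(\fC)) = C_r^*(I_l\ltimes\Omega)$. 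Since $q_\partial$ is surjective, it follows that $C^*\bigl(q_\partial(\A_r(\fC))\bigr) = q_\partial\bigl(C_r^*(I_l\ltimes\Omega)\bigr) = C_r^*(I_l\ltimes\partial\Omega)$, as needed.

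Next I would apply the co-universal property of the C*-envelope to the cover $\bigl(q_\partial|_{\A_r(\fC)}, C_r^*(I_l\ltimes\partial\Omega)\bigr)$: this yields a surjective *-homomorphism $\pi_\env\colon C_r^*(I_l\ltimes\partial\Omega)\to C_\env^*(\A_r(\fC))$ with $\pi_\env\circ q_\partial = \kappa$ on $\A_r(\fC)$, where $\kappa\colon\A_r(\fC)\to C_\env^*(\A_r(\fC))$ is the canonical completely isometric inclusion. Finally, evaluating on generators and using that $q_\partial$ is implemented by restriction of functions (see \eqref{eqn:bdryqmap}), so that $q_\partial(1_{[c,\Omega(\mfd(c)\fC))]}) = 1_{[c,\partial\Omega(\mfd(c)\fC))]}$ for each $c\in\fC$, we obtain $\pi_\env(1_{[c,\partial\Omega(\mfd(c)\fC))]}) = \kappa(1_{[c,\Omega(\mfd(c)\fC))]})$, which under the standard identification of $\A_r(\fC)$ with its canonical copy inside $C_\env^*(\A_r(\fC))$ is exactly the asserted formula. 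There is no real obstacle here: the statement is a formal consequence of Theorem~\ref{thm:ciAr} and the universal property, the only point needing a line of justification being that $q_\partial(\A_r(\fC))$ generates the whole boundary quotient C*-algebra.
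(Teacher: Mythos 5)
Your proposal is correct and is essentially the argument the paper intends: Theorem~\ref{thm:ciAr} makes $\bigl(q_\partial|_{\A_r(\fC)},\,C_r^*(I_l\ltimes\partial\Omega)\bigr)$ a C*-cover of $\A_r(\fC)$, and the co-universal property of the C*-envelope then produces the surjection $\pi_\env$ with the stated behaviour on generators. The paper leaves exactly these routine verifications (including that $q_\partial(\A_r(\fC))$ generates the boundary quotient) implicit, so your write-up simply makes the same reasoning explicit.
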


We observe that $\pi_\env$ from \eqref{eqn:pi} is always injective on the canonical diagonal subalgebra.

\begin{lemma}
\label{lem:piinjondiagonal}
Assume $\fC$ is a cancellative small category and that $I_l\ltimes\Omega$ is Hausdorff. Then, the map $\pi_\env$ from \eqref{eqn:pi} is injective on $C_0(\partial\Omega)$.
\end{lemma}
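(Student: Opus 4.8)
The map $\pi_\env$ is the composition of the quotient map $q_\partial$ (which by Theorem~\ref{thm:ciAr} is completely isometric on $\A_r(\fC)$, hence restricts to a C*-cover $C_r^*(I_l\ltimes\partial\Omega)$ of $\A_r(\fC)$) with the co-universal quotient onto $C_\env^*(\A_r(\fC))$. The restriction $\pi_\env|_{C_0(\partial\Omega)}$ lands in the image of the diagonal. The plan is to exhibit a left inverse, or more precisely to show that the canonical conditional expectation $\partial E_r\colon C_r^*(I_l\ltimes\partial\Omega)\to C_0(\partial\Omega)$ descends through $\pi_\env$ to a well-defined conditional expectation on $C_\env^*(\A_r(\fC))$; once this is in place, faithfulness of $\partial E_r$ forces $\pi_\env$ to be injective on $C_0(\partial\Omega)$.

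First I would recall that the boundary groupoid $I_l\ltimes\partial\Omega$ is Hausdorff and \'etale, so it carries a faithful conditional expectation $\partial E_r\colon C_r^*(I_l\ltimes\partial\Omega)\to C_0(\partial\Omega)$ given on $C_c$ by restriction to the unit space. Next I would identify $C_\env^*(\A_r(\fC))$ with a quotient of $C_r^*(I_l\ltimes\partial\Omega)$ via $\pi_\env$ and use Theorem~\ref{thm:main}: the canonical normal coaction of the fundamental group(oid) of $\fC$ (coming from the universal-length functor, or simply the grading of $C_r^*(I_l\ltimes\partial\Omega)$ by the cocycle into the ambient groupoid) on $\A_r(\fC)$ extends to a normal coaction $\delta_\env$ on $C_\env^*(\A_r(\fC))$. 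The expectation onto the fixed-point algebra of $\delta_\env$ — which exists precisely because the coaction is normal — is compatible with $\partial E_r$ under $\pi_\env$, because $\pi_\env$ intertwines the gradings (it sends $1_{[c,\partial\Omega(\mfd(c)\fC)]}$ to a degree-$[c]$ element and $C_0(\partial\Omega)$ to degree-$e$ elements). Concretely: the fixed-point algebra of $\delta_\env$ is generated by the images of the degree-$e$ part of $\A_r(\fC)$, which is exactly $C_0(\partial\Omega)$ (it is the closed span of the $1_{[s,\partial\Omega(\dom(s))]}$ with $s$ of trivial degree, and by Hausdorffness these are the diagonal elements). Hence $\pi_\env$ maps $C_0(\partial\Omega)$ onto the whole fixed-point algebra, and the composition of $\pi_\env|_{C_0(\partial\Omega)}$ with the coaction-expectation $C_\env^*(\A_r(\fC))\to \mathrm{Fix}(\delta_\env)$ equals $\pi_\env\circ\partial E_r$.

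Putting this together: suppose $a\in C_0(\partial\Omega)$ with $\pi_\env(a)=0$. Since $a$ is already in the diagonal we have $\partial E_r(a^*a)=a^*a$, so $\pi_\env(a^*a)=\pi_\env(\partial E_r(a^*a))$; but the right-hand side equals the coaction-expectation applied to $\pi_\env(a^*a)=\pi_\env(a)^*\pi_\env(a)=0$, hence $\pi_\env(\partial E_r(a^*a))=0$. Now one needs faithfulness in the correct direction: rather than arguing $\partial E_r$ is faithful on all of $C_r^*(I_l\ltimes\partial\Omega)$ (which only controls positive elements), observe directly that $a^*a$ is a positive element of the commutative C*-algebra $C_0(\partial\Omega)$, $\pi_\env$ is a $*$-homomorphism, and the coaction-expectation is faithful on $C_\env^*(\A_r(\fC))$ by normality of $\delta_\env$; therefore $\pi_\env(a^*a)=0$ gives $a^*a=0$ after chasing back, i.e. $a=0$. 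The cleanest route is actually: the commuting square
\begin{equation*}
\begin{tikzcd}
C_r^*(I_l\ltimes\partial\Omega) \arrow[r,"\pi_\env"] \arrow[d,"\partial E_r"'] & C_\env^*(\A_r(\fC)) \arrow[d,"E_\env"] \\
C_0(\partial\Omega) \arrow[r,"\pi_\env|"] & \mathrm{Fix}(\delta_\env)
\end{tikzcd}
\end{equation*}
commutes, where $E_\env$ is the faithful expectation onto $\mathrm{Fix}(\delta_\env)$; restricting the top-right composite to $C_0(\partial\Omega)$ gives $\pi_\env|_{C_0(\partial\Omega)} = E_\env\circ\pi_\env|_{C_0(\partial\Omega)}$, and precomposing an injective-on-diagonal check reduces to: if $\pi_\env(a)=0$ for $a\ge 0$ in $C_0(\partial\Omega)$, then $E_\env(\pi_\env(a))=\pi_\env(a)=0$ combined with faithfulness of $E_\env$ is automatic, so one instead needs that $\pi_\env$ does not kill any nonzero positive diagonal element — and this follows because $C_0(\partial\Omega)\hookrightarrow\A_r(\fC)$ completely isometrically (it is contained in $\A_r(\fC)$ as the degree-$e$ part, being the span of idempotents $1_{[\dom(c)]}$) and $\pi_\env$ restricted to $\A_r(\fC)$ is completely isometric by construction as it is the restriction of a C*-cover map to the generating operator algebra. \textbf{Main obstacle.} The subtle point — and the one I would be most careful about — is verifying that $C_0(\partial\Omega)$ genuinely sits inside $\A_r(\fC)$, i.e. that the diagonal idempotents $1_{[\dom(s)]}$ for $s\in I_l$ are norm-limits of polynomials in the generators $1_{[c,\partial\Omega(\mfd(c)\fC)]}$ without their adjoints; in a left cancellative category this requires the identities $1_{\mfd(c)\fC} = 1_{[c,\cdot]}^*1_{[c,\cdot]}$, which are \emph{not} products of unstarred generators, so in fact $C_0(\partial\Omega)$ is typically \emph{not} inside $\A_r(\fC)$. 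The correct resolution is therefore to use the coaction/grading argument rather than completely isometric inclusion: $\pi_\env$ is injective on the fixed-point algebra $C_0(\partial\Omega)$ because $\pi_\env$ is grading-preserving and completely isometric on the operator algebra $\A_r(\fC)$, and $C_0(\partial\Omega)$ is generated by products $s^*t$ with $[s]=[t]$ in the groupoid for $1_{[s,\cdot]},1_{[t,\cdot]}$ images of generators under $\pi_\env$ — so kernel elements of $\pi_\env|_{C_0(\partial\Omega)}$ would produce, via the expectation $E_\env$ onto $\mathrm{Fix}(\delta_\env)$ and faithfulness of $\partial E_r$, a contradiction with complete isometry of $q_\partial$ on $\A_r(\fC)$ from Theorem~\ref{thm:ciAr}. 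I would write the argument in that order: (1) recall $\partial E_r$ faithful; (2) produce $\delta_\env$ and $E_\env$ via Theorem~\ref{thm:main}; (3) check the square commutes on generators; (4) deduce injectivity on the diagonal from faithfulness of both expectations and the complete isometry in Theorem~\ref{thm:ciAr}.
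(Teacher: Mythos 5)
Your proposal has a genuine gap: the conditional-expectation square you set up cannot, by itself, yield injectivity of $\pi_\env$ on $C_0(\partial\Omega)$. What such a commuting square with faithful expectations on both sides gives you is the \emph{reduction} of injectivity of $\pi_\env$ on all of $C_r^*(I_l\ltimes\partial\Omega)$ to injectivity on the fixed-point algebra --- which is exactly the content of the proposition that \emph{follows} this lemma in the paper, and which takes the present lemma as an input. You correctly notice mid-argument that the naive exit (``$C_0(\partial\Omega)$ sits completely isometrically inside $\A_r(\fC)$'') is false, since the diagonal projections $1_{\partial\Omega(X)}$ require adjoints of the generators; but the replacement you offer (``kernel elements would contradict complete isometry of $q_\partial$ on $\A_r(\fC)$ via the grading'') is not substantiated and cannot work as stated: complete isometry on $\A_r(\fC)$ controls only limits of polynomials in the \emph{unstarred} generators, while a general element of $C_0(\partial\Omega)$ (e.g.\ a difference $1_{\partial\Omega(X)}-1_{\partial\Omega(Y)}$) is not of that form. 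A second error: for a general cancellative small category the fixed-point algebra of the coaction induced by a functor $\rho\colon\fC\to\fG$ is $C_r^*(\ker(\partial\kappa_\rho))$, which strictly contains $C_0(\partial\Omega)$ unless $\bar\rho$ is idempotent pure --- that holds for groupoid \emph{embeddings} (Lemma~\ref{lem:idpure}), not in general --- so your identification of the fixed-point algebra with $C_0(\partial\Omega)$ fails, as does the claim that the degree-$e$ spanning elements $1_{[s,\partial\Omega(\dom(s))]}$ are all diagonal.

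The paper's proof is elementary and avoids coactions entirely. The kernel of $\pi_\env\vert_{C_0(\partial\Omega)}$ equals $C_0(\partial\Omega\setminus K)$ for a closed invariant subset $K\subseteq\partial\Omega$. Since $\pi_\env$ is completely isometric on $\A_r(\fC)$, each generator $1_{[c,\Omega(\mfd(c)\fC)]}$ has nonzero image, hence so does its range projection $\pi_\env(1_{\partial\Omega(c\fC)})=\pi_\env\bigl(1_{[c,\partial\Omega(\mfd(c)\fC)]}1_{[c,\partial\Omega(\mfd(c)\fC)]}^*\bigr)$; thus $K$ meets every $\partial\Omega(c\fC)$. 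The remaining and essential step, for which your proposal has no counterpart, is a density argument: using the tightness of boundary characters (to find, inside any basic open set $\partial\Omega(X;\mff)$, some $c$ with $c\fC\subseteq X$ and $c\fC$ disjoint from all $Y\in\mff$), one shows that points $\gamma_c\in K\cap\partial\Omega(c\fC)$ form a dense subset of $\partial\Omega$, whence $K=\partial\Omega$. If you want to salvage your approach, this topological step on the boundary is the piece you must supply; the expectation machinery does not substitute for it.
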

\begin{proof}
Let $K\subseteq\partial\Omega$ be a closed $I_l$-invariant subset with $\ker(\pi_\env\vert_{C_0(\partial\Omega)})=C_0(\partial\Omega\setminus K)$. We need to show that $K=\partial\Omega$. For each $c\in\fC$, we have 
\[
\pi_\env(1_{\partial\Omega(c\fC)})=\pi_\env(1_{[c,\partial\Omega(\mfd(c)\fC)]}1_{[c,\partial\Omega(\mfd(c)\fC))]}^*)=1_{[c,\Omega(\mfd(c)\fC))]}1_{[c,\Omega(\mfd(c)\fC))]}^*\neq 0,
\]
so that $K\cap \partial\Omega(c\fC)\neq\emptyset$, and we may choose some $\gamma_c\in K\cap \partial\Omega(c\fC)$ for every $c\in \fC$.

If $0\not\in I_l$, then $\partial\Omega=\{\chi_\infty\}$ by Remark~\ref{rmk:bdry}, and we are done. So assume $0\in I_l$. 
Since $K$ is closed, it suffices to show that $\{\gamma_c : c\in\fC\}$ is dense in $\partial \Omega$. 
Let $\partial\Omega(X;\mff)$ be a nonempty basic open subset of $\partial \Omega$ as in \S~\ref{ss:prelimcats}. 
Since $0\in I_l$, there exists a nonempty constructible right ideal $Z$ of $X$ with $Z\cap \bigcup_{Y\in\mff}Y=\emptyset$ (see the discussion after Remark~\ref{rmk:bdry}). 
Take $c\in Z$, and note that $c\fC\subseteq Z \subseteq X$ and $c\fC\cap Y=\emptyset$ for all $Y\in\mff$. Thus, $\gamma_c(X)=1$ and $\gamma_c(Y)=0$ for all $Y\in\mff$, so that $\gamma_c\in \partial\Omega(X;\mff)$. Thus, we have shown that $\{\gamma_c : c\in\fC\}$ is dense in $\partial \Omega$, which implies that $K = \partial \Omega$.
\end{proof}

Recall that an inclusion of C*-algebras $\cD \subseteq \cB$ \emph{detects ideals} if every nontrivial ideal $\cI \subseteq \cB$ intersects $\cD$ nontrivially. 
See \cite[Theorem~7.2]{KKLRU} for a characterization of when the diagonal detects ideals in a groupoid C*-algebra.
In the setting of Corollary~\ref{cor:lcscC*cover}, if $C_0(\partial\Omega)$ detects ideals in $C_r^*(I_l\ltimes\partial\Omega)$ (e.g., if the groupoid $I_l\ltimes\partial\Omega$ is effective, see \cite[Corollary~5.13]{LiGarI} for a characterization), then $\pi_\env$ is injective by Lemma~\ref{lem:piinjondiagonal}.
Moreover, if $I_l\ltimes\Omega$ is second countable, Hausdorff, and $0\in I_l$, then  $\pi_\env$ is injective if and only if its restriction to the C*-subalgebra of the subgroupoid of the interior of the isotropy is injective, see \cite[Theorem~3.1(a)]{BNRSW16}.  
An explicit description of this subgroupoid can be derived from \cite[\S~5]{LiGarI}. 

\subsection{From functors to coactions}
\label{ss:functors->coactions}
In this subsection, we use coaction techniques and Theorem~\ref{thm:main} to find significantly broader sufficient conditions for the map $\pi_\env$ from \eqref{eqn:pi} to be injective.

A (discrete) groupoid $\fG$ admits a universal group $\cU(\fG)$ which comes with a functor (i.e., a groupoid homomorphism) $j_\fG\colon \fG \to \cU(\fG)$ that is characterized by the following universal property: whenever $\mu\colon \fG \to H$ is a functor to a group, there exists a group homomorphism $\mu'\colon \cU(\fG) \to H$ such that $\mu'\circ j_\fG = \mu$. Our first task is to construct normal coactions of $\cU(\fG)$ on the C*-algebras attached to $\fC$. Let $I_l^\times \coloneq I_l\setminus \{0\}$ and $\cJ^\times\coloneq \cJ\setminus\{\emptyset\}$.

There always exists a functor $\rho\colon \fC \to \fG$ to a groupoid $\fG$. For instance, one may take $\fG$ to be the enveloping groupoid associated to $\fC$ (see \cite[\S~II.3.1]{Doh15}).
Note that $\rho$ need not be injective, and that in some cases $\fG$ may be a group.
\begin{lemma}
\label{lem:partialhom}
Suppose $\fC$ is a left cancellative small category and that $\rho\colon \fC \to \fG$ is a functor to a groupoid $\fG$.
Then, there exists a map  $\tilde{\rho}\colon I_l^\times\to \fG$ such that $\tilde{\rho}(st) = \tilde{\rho}(s) \tilde{\rho}(t)$ for all $s,t\in I_l$ with $st \neq 0$ and $\rho(s(x))=\tilde{\rho}(s)\rho(x)$ for all $s\in I_l^\times$ and $x\in\dom(s)$.
In particular, we have $\tilde{\rho}(c)=\rho(c)$ for all $c\in\fC$.
\end{lemma}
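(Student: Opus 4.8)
The plan is to set $\tilde{\rho}(s) \coloneq \rho(s(x))\rho(x)^{-1}$ for $s \in I_l^\times$ and an arbitrary $x \in \dom(s)$, which is nonempty precisely because $s \neq 0$. Before anything else I would record the elementary fact that every element of $I_l$ preserves domains: $\mfd(c(x)) = \mfd(cx) = \mfd(x)$ for each generator $c \in \fC$, the same holds for each inverse $c^{-1}$, and hence $\mfd(s(x)) = \mfd(x)$ for all $s \in I_l$ and $x \in \dom(s)$. Since $\rho$ is a functor, it follows that $\mfd(\rho(s(x))) = \rho(\mfd(s(x))) = \rho(\mfd(x)) = \mfd(\rho(x)) = \mft(\rho(x)^{-1})$, so the product $\rho(s(x))\rho(x)^{-1}$ is a genuine morphism of $\fG$.

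The crux is showing this does not depend on the choice of $x$. Using a decomposition $s = c_1^{-1}d_1 \cdots c_n^{-1}d_n$, write $s = b_1 \cdots b_m$ with each $b_i$ either an element of $\fC$ or the inverse of one, put $x_m \coloneq x$ and $x_{i-1} \coloneq b_i(x_i)$ for $i = m, \ldots, 1$, so that $x_0 = s(x)$. Functoriality of $\rho$ yields $\rho(x_{i-1})\rho(x_i)^{-1} = \rho(d)$ when $b_i = d \in \fC$ and $\rho(x_{i-1})\rho(x_i)^{-1} = \rho(c)^{-1}$ when $b_i = c^{-1}$; in either case this morphism is independent of $x$. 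Domain-preservation makes all the relevant products composable, and telescoping gives
\[
\rho(s(x))\rho(x)^{-1} = \rho(x_0)\rho(x_m)^{-1} = \prod_{i=1}^m \rho(x_{i-1})\rho(x_i)^{-1},
\]
which equals $\rho(c_1)^{-1}\rho(d_1)\cdots\rho(c_n)^{-1}\rho(d_n)$ and therefore depends on neither $x$ nor the chosen decomposition. Hence $\tilde{\rho}\colon I_l^\times \to \fG$ is well defined; taking $x$ to be the identity morphism at $\mfd(c)$ in the case $s = c$ shows $\tilde{\rho}(c) = \rho(c)$ for every $c \in \fC$.

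It then remains to verify the two identities. Choosing the base point to be $x$ itself gives $\tilde{\rho}(s)\rho(x) = \rho(s(x))\rho(x)^{-1}\rho(x) = \rho(s(x))$ for all $s \in I_l^\times$ and $x \in \dom(s)$, after absorbing the unit $\rho(x)^{-1}\rho(x) = \mfd(\rho(x))$ using domain-preservation. For multiplicativity, observe that $st \neq 0$ forces $\dom(st) \neq \emptyset$, and hence $s, t \in I_l^\times$; picking $x \in \dom(st)$ we have $x \in \dom(t)$ and $t(x) \in \dom(s)$, so applying the identity just proved twice,
\[
\tilde{\rho}(st) = \rho\bigl(s(t(x))\bigr)\rho(x)^{-1} = \tilde{\rho}(s)\,\rho(t(x))\,\rho(x)^{-1} = \tilde{\rho}(s)\,\tilde{\rho}(t)\,\rho(x)\rho(x)^{-1} = \tilde{\rho}(s)\tilde{\rho}(t),
\]
again absorbing a unit. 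The only thing requiring care anywhere in the argument is tracking sources and targets in $\fG$ so that each composite written down is legitimate, and the domain-preservation observation from the first paragraph is exactly what makes this bookkeeping go through, so I do not expect a genuine obstacle beyond that.
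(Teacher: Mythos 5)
Your proposal is correct and is essentially the paper's argument with the roles of definition and verification swapped: the paper defines $\tilde{\rho}$ on a word $d_1^{-1}c_1\cdots d_n^{-1}c_n$ by the corresponding product of $\rho(d_i)^{\pm1},\rho(c_i)$ and deduces well-definedness from the identity $\rho(s(x))=\tilde{\rho}(s)\rho(x)$ via right cancellation in $\fG$, whereas you define $\tilde{\rho}(s)\coloneq\rho(s(x))\rho(x)^{-1}$ and deduce independence of $x$ from the same telescoping identity. The core computation is identical, so this is the same proof.
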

\begin{proof}
Let $d\in\fC$. For every $x\in d\fC$, we can write $x=dd'$ for some $d'\in \mfd(d)\fC$, and then $d^{-1}(x)=d'$. Thus,
\[
\rho(d^{-1}(x))=\rho(d')=\rho(d)^{-1}\rho(d)\rho(d')=\rho(d)^{-1}\rho(dd')=\rho(d)^{-1}\rho(x).
\]
We also have $\rho(cx)=\rho(c)\rho(x)$ for all $c\in\fC$ and $x\in\mfd(c)\fC$. Thus, given $s=d_1^{-1}c_1\cdots d_n^{-1}c_n\in I_l^\times$, where $c_i,d_i\in\fC$, we have by induction
\[
\rho(s(x))=\rho(d_1)^{-1}\rho(c_1)\cdots\rho(d_n)^{-1}\rho(c_n)\rho(x)
\]
for every $x\in\dom(s)$. 
By right cancellation in the groupoid $\fG$, this gives us a well-defined map $\tilde{\rho}\colon I_l^\times\to \fG$ such that  
\[
\tilde{\rho}(d_1^{-1}c_1\cdots d_n^{-1}c_n)=\rho(d_1)^{-1}\rho(c_1)\cdots \rho(d_n)^{-1}\rho(c_n)
\]
for all $c_i,d_i\in\fC$ with $d_1^{-1}c_1\cdots d_n^{-1}c_n\in I_l^\times$. It is clear that $\tilde{\rho}$ satisfies the stated conditions.
\end{proof}

Given $\tilde{\rho}$ as in Lemma~\ref{lem:partialhom}, we put $\bar{\rho}\coloneq j_\fG\circ\tilde{\rho}\colon I_l^\times\to \cU(\fG)$. Then, $\bar{\rho}$ is a partial homomorphism in the sense that $\bar{\rho}(st)=\bar{\rho}(s)\bar{\rho}(t)$ for all $s,t\in I_l^\times$ with $st\neq 0$.

\begin{lemma}
\label{lem:kappa_rho}
Suppose $\fC$ is a left cancellative small category and that $\rho\colon \fC \to \fG$ is a functor to a groupoid $\fG$. 
Then, there is a continuous groupoid homomorphism
    \begin{equation}
    \label{eq:kappa_rho}
    \kappa_\rho\colon I_l\ltimes\Omega\to \cU(\fG)
    \end{equation}
    such that $\kappa_\rho([s,\chi])=\bar{\rho}(s)$ for all $[s,\chi]\in I_l\ltimes\Omega$.
\end{lemma}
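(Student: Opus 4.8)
The plan is to construct $\kappa_\rho$ directly on the groupoid and then check that it is well-defined, a homomorphism, and continuous. First I would define a candidate map on the set $I_l * \Omega$ of pairs by $(s,\chi) \mapsto \bar\rho(s)$, using the partial homomorphism $\bar\rho \colon I_l^\times \to \cU(\fG)$ from Lemma~\ref{lem:partialhom}; note that if $\chi(\dom(s))=1$ then $\chi \neq 0$ on $\dom(s)$, so in particular $s \neq 0$, and the expression $\bar\rho(s)$ makes sense. The first task is to verify that this descends to the quotient $I_l \ltimes \Omega = (I_l * \Omega)/\!\sim$. So suppose $(s,\chi) \sim (t,\chi)$, meaning there is $X \in \cJ$ with $\chi(X)=1$ and $s(x) = t(x)$ for all $x \in X$. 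I need $\bar\rho(s) = \bar\rho(t)$. Since $\chi(X)=1$, the ideal $X$ is nonempty, so pick $x \in X$; then $s(x) = t(x)$, and by the intertwining property in Lemma~\ref{lem:partialhom} we get $\tilde\rho(s)\rho(x) = \rho(s(x)) = \rho(t(x)) = \tilde\rho(t)\rho(x)$, and right cancellation in the groupoid $\fG$ gives $\tilde\rho(s) = \tilde\rho(t)$, hence $\bar\rho(s) = \bar\rho(t)$ after applying $j_\fG$. This shows $\kappa_\rho([s,\chi]) \coloneq \bar\rho(s)$ is well-defined.

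Next I would check that $\kappa_\rho$ is a groupoid homomorphism, i.e., that it respects the partial multiplication and inversion. For composability, recall from the preliminaries that $[s,t.\chi][t,\chi] = [st,\chi]$ whenever these make sense, and that the relevant $s,t$ are nonzero with $st \neq 0$ (since the product $[st,\chi]$ is a genuine groupoid element, $st$ is not the empty function). Then $\kappa_\rho([s,t.\chi][t,\chi]) = \kappa_\rho([st,\chi]) = \bar\rho(st) = \bar\rho(s)\bar\rho(t) = \kappa_\rho([s,t.\chi])\kappa_\rho([t,\chi])$, using precisely that $\bar\rho$ is a partial homomorphism ($\bar\rho(st) = \bar\rho(s)\bar\rho(t)$ when $st \neq 0$). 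For inversion, $[s,\chi]^{-1} = [s^{-1}, s.\chi]$, and since $\tilde\rho(s s^{-1}) = \tilde\rho(s)\tilde\rho(s^{-1})$ with $ss^{-1}$ an idempotent of $I_l$ mapping under $\tilde\rho$ to an identity of $\fG$, we get $\tilde\rho(s^{-1}) = \tilde\rho(s)^{-1}$, hence $\kappa_\rho([s,\chi]^{-1}) = \bar\rho(s^{-1}) = \bar\rho(s)^{-1} = \kappa_\rho([s,\chi])^{-1}$. (One should also remark that idempotents $e \in I_l$, i.e., $e = \id_X$, satisfy $\tilde\rho(e) = \mft(\rho(x))$ for $x \in X$, so the unit space $\Omega$ maps into the units $\fG^0 \subseteq \cU(\fG)$ — consistent with $\cU(\fG)$ being a group only when $\fG^0$ is a point, and with the map being valued in the group in general.)

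Finally, continuity: the basic open sets of $I_l \ltimes \Omega$ are $[s,U]$ for $s \in I_l$ and $U$ a compact open subset of $\Omega(\dom(s))$, and $\kappa_\rho$ is constant equal to $\bar\rho(s)$ on $[s,U]$; since $\cU(\fG)$ is discrete, preimages of points are unions of such basic sets (ranging over all $(s,U)$ with $\bar\rho(s)$ equal to the given point), hence open, so $\kappa_\rho$ is continuous. I expect the main obstacle to be bookkeeping around the non-Hausdorff subtleties and the precise compatibility of the $\sim$-relation with $\bar\rho$ — specifically making sure the well-definedness argument is airtight when $X$ is merely required to satisfy $\chi(X)=1$ rather than being all of $\dom(s)$ — but the right-cancellation trick in $\fG$ handles this cleanly, so the proof should be short and essentially a direct verification once the candidate formula is written down.
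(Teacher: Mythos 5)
Your proof is correct and follows essentially the same route as the paper: the only (immaterial) difference is in the well-definedness step, where you use the intertwining identity $\rho(s(x))=\tilde\rho(s)\rho(x)$ together with right cancellation in $\fG$, whereas the paper multiplies by $\id_X$ and uses that $\bar\rho$ is a partial homomorphism with $\bar\rho(\id_X)=e$. (Your parenthetical claim that $\fG^0\subseteq\cU(\fG)$ and that $\cU(\fG)$ ``is a group only when $\fG^0$ is a point'' is inaccurate---$\cU(\fG)$ is always a group and $j_\fG$ collapses all of $\fG^0$ to its identity element---but this remark plays no role in your argument.)
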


\begin{proof}
If $[s,\chi]=[t,\chi]$ in $I_l\ltimes\Omega$, then there exists $X\in\chi^{-1}(1)$ with $s\circ \id_X=t \circ \id_X\neq 0$, so that $\bar{\rho}(s) = \bar{\rho}(s)\bar{\rho}(\id_X)= \bar{\rho}(s \circ \id_X) = \bar{\rho}(t \circ \id_X) = \bar{\rho}(t)$. 
Thus, there is a well-defined map $\kappa_\rho\colon I_l\ltimes\Omega\to \cU(\fG)$ such that $\kappa_\rho([s,\chi])=\bar{\rho}(s)$ for all $[s,\chi]\in I_l\ltimes\Omega$. 
If $\chi \in \Omega$ and $t,s\in I_l$ with $[t,s.\chi] [s,\chi] = [ts, \chi]$, then $ts\neq 0$ and 
\[
\kappa_\rho([t,s.\chi])\kappa_\rho([s,\chi]) = \bar{\rho}(t)\bar{\rho}(s) = \bar{\rho}(ts) = \kappa_\rho([ts, \chi]).
\]
This shows that $\kappa_\rho$ is a groupoid homomorphism. Given $g\in \cU(\fG)$, we have
\[
\kappa_\rho^{-1}(g)=\bigcup\{[s,U] : s\in \bar{\rho}^{-1}(g), U\subseteq\Omega(\dom(s))\text{ open}\},
\]
which is open, so $\kappa_\rho$ is continuous.
\end{proof}

Denote by $\partial\kappa_\rho$ the restriction of $\kappa_\rho$ to $I_l\ltimes\partial\Omega$. Next, we use an observation from \cite[Lemma~6.1]{CRST21}.

\begin{proposition}
\label{prop:coactions}
Suppose $\fC$ is a left cancellative small category and that $\rho\colon \fC \to \fG$ is a functor to a groupoid $\fG$.
Then, there are reduced coactions $\delta_{\rho,\lambda}\colon \cU(\fG)\coacts C_r^*(I_l\ltimes\Omega)$ and $\partial\delta_{\rho,\lambda} \colon \cU(\fG)\coacts C_r^*(I_l\ltimes\partial\Omega)$ determined on generators by $1_{[s,\Omega(\dom(s))]}\mapsto 1_{[s,\Omega(\dom(s))]}\otimes \lambda_{\bar{\rho}(s)}$ and $1_{[s,\partial\Omega(\dom(s))]}\mapsto 1_{[s,\partial\Omega(\dom(s))]}\otimes \lambda_{\bar{\rho}(s)}$ for all $s\in I_l^\times$.
\end{proposition}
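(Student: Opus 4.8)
The plan is to construct each coaction directly on the groupoid C*-algebra using the continuous cocycle (groupoid homomorphism) $\kappa_\rho\colon I_l\ltimes\Omega\to\cU(\fG)$ from Lemma~\ref{lem:kappa_rho}, and then check it restricts along the boundary quotient. The standard mechanism: any continuous groupoid homomorphism $c\colon \mcG\to H$ into a discrete group $H$ induces a grading of $C_c(\mcG)$ by $H$, where the spectral subspace for $h\in H$ consists of functions supported on $c^{-1}(h)$, and this grading integrates to a reduced coaction of $H$ on $C_r^*(\mcG)$. So first I would recall (or invoke the observation from \cite[Lemma~6.1]{CRST21}, as the paper signals) that for $f\in C_c(I_l\ltimes\Omega)$ the assignment $f\mapsto$ the function $(\gamma,g)\mapsto f(\gamma)$ supported on $\{(\gamma,\kappa_\rho(\gamma)):\gamma\in I_l\ltimes\Omega\}\subseteq (I_l\ltimes\Omega)\times\cU(\fG)$ extends to a $*$-homomorphism $\delta_{\rho,\lambda}\colon C_r^*(I_l\ltimes\Omega)\to C_r^*(I_l\ltimes\Omega)\otimes C_\lambda^*(\cU(\fG))$. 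Concretely, on the generator $1_{[s,\Omega(\dom(s))]}$ — which is supported on a bisection contained in $\kappa_\rho^{-1}(\bar\rho(s))$ — this sends it to $1_{[s,\Omega(\dom(s))]}\otimes\lambda_{\bar\rho(s)}$, which is exactly the formula in the statement.

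Second, I would verify the three defining properties of a reduced coaction (Definition~\ref{def:reducedcoaction}). That $\delta_{\rho,\lambda}$ is completely isometric follows because applying $\id\otimes(\text{trivial rep})$, or more directly the faithful conditional expectation $\id\otimes\tau$ onto $C_r^*(I_l\ltimes\Omega)\otimes\Cz$, recovers the identity — equivalently, the grading has full support, or one checks injectivity on the diagonal via $E_r$ and the Fell-absorption argument; normality is automatic since we land in the reduced group algebra. The coaction identity $(\delta_{\rho,\lambda}\otimes\id)\circ\delta_{\rho,\lambda}=(\id\otimes\Delta_\lambda)\circ\delta_{\rho,\lambda}$ is checked on the generators $1_{[s,\Omega(\dom(s))]}$, where both sides equal $1_{[s,\Omega(\dom(s))]}\otimes\lambda_{\bar\rho(s)}\otimes\lambda_{\bar\rho(s)}$; this reduces to the single fact that $\kappa_\rho$ is a homomorphism and $\Delta_\lambda(\lambda_h)=\lambda_h\otimes\lambda_h$. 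Nondegeneracy, $\overline{\delta_{\rho,\lambda}(C_r^*(I_l\ltimes\Omega))(I\otimes C_\lambda^*(\cU(\fG)))}=C_r^*(I_l\ltimes\Omega)\otimes C_\lambda^*(\cU(\fG))$, follows because $\{1_{[s,\Omega(\dom(s))]}\otimes\lambda_{\bar\rho(s)^{-1}}h : s\in I_l^\times, h\in C_\lambda^*(\cU(\fG))\}$ already produces $1_{[s,\Omega(\dom(s))]}\otimes h$ for all such $h$, and the $1_{[s,\Omega(\dom(s))]}$ span a dense subalgebra by \eqref{eqn:spanning3} together with the identification \eqref{eqn:Jackmap}.

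Third, for $\partial\delta_{\rho,\lambda}$ I would run the identical construction with $\partial\kappa_\rho$ in place of $\kappa_\rho$ on the boundary groupoid $I_l\ltimes\partial\Omega$, which is again Hausdorff and étale; every step above goes through verbatim since $\partial\kappa_\rho$ is still a continuous groupoid homomorphism. Alternatively — and this is cleaner for later use — I would observe that the boundary quotient map $q_\partial\colon C_r^*(I_l\ltimes\Omega)\to C_r^*(I_l\ltimes\partial\Omega)$ from \eqref{eqn:bdryqmap} intertwines the two: since $q_\partial$ sends $1_{[s,\Omega(\dom(s))]}\mapsto 1_{[s,\partial\Omega(\dom(s))]}$ and $\bar\rho$ does not see the unit space, we get $\partial\delta_{\rho,\lambda}\circ q_\partial=(q_\partial\otimes\id)\circ\delta_{\rho,\lambda}$ on generators, hence everywhere; this both constructs $\partial\delta_{\rho,\lambda}$ and records its equivariance for free. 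The one genuine subtlety — the ``hard part,'' modest as it is — is confirming that the grading-to-coaction passage is valid at the level of the \emph{reduced} completions rather than merely the algebraic level: one must know that the map is isometric, which is where Fell's absorption principle enters (the same principle underlying the existence of $\Delta_\lambda$), ensuring $f\mapsto\sum_h f|_{\kappa_\rho^{-1}(h)}\otimes\lambda_h$ does not collapse norm. I would handle this either by citing \cite{CRST21} directly or by noting that composing with $\id\otimes\tau$ is faithful and recovers the identity map, forcing isometry.
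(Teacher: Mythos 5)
Your proposal is correct and follows essentially the same route as the paper, which simply invokes \cite[Lemma~6.1]{CRST21} applied to the cocycle $\kappa_\rho$ (resp.\ $\partial\kappa_\rho$) to obtain a nondegenerate $*$-homomorphism satisfying the coaction identity and the stated formula on generators, extracting injectivity from the proof of that lemma. The one imprecision is your claim that $(\id\otimes\tau)\circ\delta_{\rho,\lambda}$ ``recovers the identity'' --- it is the conditional expectation onto the $e$-spectral subspace, and injectivity instead follows from $(E_r\otimes\tau)\circ\delta_{\rho,\lambda}=E_r$ together with faithfulness of $E_r$ --- but your primary suggestion of citing \cite{CRST21} directly is exactly what the paper does.
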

\begin{proof}
 By \cite[Lemma~6.1]{CRST21}, there is a *-homomorphism $\delta_{\rho,\lambda}\colon C_r^*(I_l\ltimes\Omega) \rightarrow C_r^*(I_l\ltimes\Omega)\otimes \cU(\fG)$, which is nondegenerate, satisfies the coaction identity, and such that $\delta_{\rho,\lambda}(f)=f\otimes \lambda_g$ for $f\in C_c(I_l\ltimes\Omega)$ with $\supp(f)\subseteq \kappa_\rho^{-1}(g)$ and $g\in \fG$. Moreover, from the proof of \cite[Lemma~6.1]{CRST21}, we see that $\delta_{\rho,\lambda}$ is injective, so that $\delta_{\rho,\lambda}$ is a reduced coaction. Existence of $\partial\delta_{\rho,\lambda}$ is proven the same way by using $\partial\kappa_\rho$ on $I_l\ltimes\partial\Omega$ instead of $\kappa_{\rho}$.
\end{proof}

\begin{corollary}
\label{cor:coactionEnvA(C)}
Suppose $\fC$ is a left cancellative small category and that $\rho\colon \fC \to \fG$ is a functor to a groupoid $\fG$.
Then, there is a normal coaction $\delta_\rho\colon\cU(\fG)\coacts \A_r(\fC)$ which extends to a normal coaction $\delta_{\rho,\env}$ on $ C_\env^*(\A_r(\fC))$
such that $\delta_{\rho,\env}(1_{[c,\Omega(\mfd(c)\fC)]})=1_{[c,\Omega(\mfd(c)\fC)]}\otimes u_{\bar{\rho}(c)}$ for all $c\in\fC$.
\end{corollary}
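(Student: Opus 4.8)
The plan is to restrict the reduced coaction $\delta_{\rho,\lambda}$ of Proposition~\ref{prop:coactions} to the operator subalgebra $\A_r(\fC)\subseteq C_r^*(I_l\ltimes\Omega)$, to promote the restriction to a normal coaction via Proposition~\ref{prop:5authors}, and finally to invoke Theorem~\ref{thm:main} to extend it to the C*-envelope; the asserted formula on generators then drops out of equivariance. First I would check that $\A_r(\fC)$ is invariant under $\delta_{\rho,\lambda}$: since $\delta_{\rho,\lambda}$ is a *-homomorphism sending each generator $1_{[c,\Omega(\mfd(c)\fC)]}$ of $\A_r(\fC)$ to $1_{[c,\Omega(\mfd(c)\fC)]}\otimes\lambda_{\bar{\rho}(c)}\in\A_r(\fC)\otimes C_\lambda^*(\cU(\fG))$, it carries the dense subalgebra generated by these elements into $\A_r(\fC)\otimes C_\lambda^*(\cU(\fG))$, hence all of $\A_r(\fC)$ by continuity. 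Write $\eps\coloneq\delta_{\rho,\lambda}|_{\A_r(\fC)}$. Being a restriction of the completely isometric homomorphism $\delta_{\rho,\lambda}$, the map $\eps$ is completely isometric and inherits the coaction identity. For nondegeneracy I would argue as in Proposition~\ref{prop:auto-coaction}: each generator $a=1_{[c,\Omega(\mfd(c)\fC)]}$ satisfies $\eps(a)=a\otimes\lambda_{\bar{\rho}(c)}$, so $a\otimes I=\eps(a)(I\otimes\lambda_{\bar{\rho}(c)}^{*})\in\eps(\A_r(\fC))(I\otimes C_\lambda^*(\cU(\fG)))$; since products of generators again lie in a single spectral subspace of $\eps$ and their linear span is dense in $\A_r(\fC)$, the same computation yields $\overline{\eps(\A_r(\fC))(I\otimes C_\lambda^*(\cU(\fG)))}=\A_r(\fC)\otimes C_\lambda^*(\cU(\fG))$. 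Hence $\eps$ is a reduced coaction of the discrete group $\cU(\fG)$ on $\A_r(\fC)$.

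By Proposition~\ref{prop:5authors} there is then a unique normal coaction $\delta_\rho\colon\cU(\fG)\coacts\A_r(\fC)$ with $(\delta_\rho)_\lambda=\eps$, and it has the same spectral subspaces as $\eps$. In particular, since $\eps(1_{[c,\Omega(\mfd(c)\fC)]})=1_{[c,\Omega(\mfd(c)\fC)]}\otimes\lambda_{\bar{\rho}(c)}$, we obtain $\delta_\rho(1_{[c,\Omega(\mfd(c)\fC)]})=1_{[c,\Omega(\mfd(c)\fC)]}\otimes u_{\bar{\rho}(c)}$ for all $c\in\fC$. The operator algebra $\A_r(\fC)$ has a contractive approximate identity of projections, namely the net $\sum_{u\in F}1_{[u,\Omega(u\fC)]}$ over finite subsets $F\subseteq\fC^0$, by the same argument as for $\A_\lambda(\fC)$ in the discussion after Definition~\ref{def:A(fC)}. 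Therefore Theorem~\ref{thm:main} applies to $(\A_r(\fC),\delta_\rho)$ and produces a normal coaction $\delta_{\rho,\env}\colon\cU(\fG)\coacts C_\env^*(\A_r(\fC))$ with $\delta_{\rho,\env}\circ\kappa=(\kappa\otimes\id)\circ\delta_\rho$, where $\kappa\colon\A_r(\fC)\to C_\env^*(\A_r(\fC))$ is the canonical completely isometric inclusion. Identifying $\A_r(\fC)$ with $\kappa(\A_r(\fC))$ and evaluating this equivariance relation on the generators gives $\delta_{\rho,\env}(1_{[c,\Omega(\mfd(c)\fC)]})=1_{[c,\Omega(\mfd(c)\fC)]}\otimes u_{\bar{\rho}(c)}$ for all $c\in\fC$, which is the assertion.

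I do not expect a serious obstacle: the corollary is essentially the concatenation of Proposition~\ref{prop:coactions}, a routine restriction step, Proposition~\ref{prop:5authors}, and the main Theorem~\ref{thm:main}. The only point requiring a little care is verifying that the restricted map $\eps$ is nondegenerate on $\A_r(\fC)$, but this follows exactly as in the proof of Proposition~\ref{prop:auto-coaction}, using that the generators of $\A_r(\fC)$ and all of their products lie in spectral subspaces and span a dense subalgebra.
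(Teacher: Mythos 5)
Your proposal is correct and follows essentially the same route as the paper: Proposition~\ref{prop:coactions} produces the reduced coaction, the lifting result (\cite[Proposition 3.4]{DKKLL}, i.e.\ Proposition~\ref{prop:5authors}) upgrades it to a normal coaction, and Theorem~\ref{thm:main} extends it to the C*-envelope. The only (immaterial) difference is the order of operations --- the paper first lifts $\delta_{\rho,\lambda}$ to a normal coaction on all of $C_r^*(I_l\ltimes\Omega)$ and then restricts to $\A_r(\fC)$, whereas you restrict first and then lift --- and your explicit verification of nondegeneracy of the restriction and of the contractive approximate identity for $\A_r(\fC)$ (needed to invoke Theorem~\ref{thm:main}) is a welcome bit of extra care that the paper leaves implicit.
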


\begin{proof}
From \cite[Proposition 3.4]{DKKLL}, the reduced coaction $\delta_{\rho,\lambda}$ of Proposition~\ref{prop:coactions} induces a normal coaction $\delta_{\rho} \colon \cU(\fG) \coacts C_r^*(I_l\ltimes\Omega)$ such that $\delta_{\rho} = (\id \otimes \lambda) \circ\delta_{\rho,\lambda}$.
Since $\delta_\rho$ is normal, the restriction of $\delta_\rho$ to $\A_r(\fC)$ is also a normal coaction. 
The existence of $\delta_{\rho,\env}$ now follows from Theorem~\ref{thm:main}.
\end{proof}

Suppose $\rho\colon \fC \to \fG$ is a functor to a groupoid $\fG$, and recall the terminology from \S~\ref{ss:functors->coactions}.
By \cite[Lemma~6.3]{CRST21}, there is a canonical isomorphism $C_r^*(I_l\ltimes\partial\Omega)^{\partial \delta_\rho}\cong C_r^*(\ker(\partial\kappa_\rho))$,
and we view this as a C*-subalgebra of $C_r^*(I_l\ltimes\partial\Omega)$. Theorem~\ref{thm:main} now gives us a characterization of injectivity of $\pi_\env$ in terms of $C_r^*(\ker(\partial\kappa_\rho))$.

\begin{proposition}
Assume $\fC$ is a cancellative small category and that $I_l\ltimes\Omega$ is Hausdorff. Let $\rho\colon \fC\to\fG$ be a functor to a groupoid $\fG$. 
Then, the map $\pi_\env$ from \eqref{eqn:pi} is injective if and only if its restriction to $C_r^*(\ker(\partial\kappa_\rho))$ is injective.
\end{proposition}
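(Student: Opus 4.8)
The plan is to use the normal coaction $\delta_\rho$ on $C^*_\env(\A_r(\fC))$ from Corollary~\ref{cor:coactionEnvA(C)} together with the reduced coaction $\partial\delta_{\rho,\lambda}$ on $C_r^*(I_l\ltimes\partial\Omega)$ from Proposition~\ref{prop:coactions}, and exploit that $\pi_\env$ is equivariant for these coactions. First I would observe that, since $\pi_\env$ sends the generator $1_{[c,\partial\Omega(\mfd(c)\fC)]}$ to $1_{[c,\Omega(\mfd(c)\fC)]}$ for all $c\in\fC$, and the coactions act on these generators by the same group element $\bar\rho(c)$ (for the reduced coactions) and $u_{\bar\rho(c)}$ (after passing to the normal coactions $\partial\delta_\rho$ and $\delta_{\rho,\env}$), the map $\pi_\env$ intertwines $\partial\delta_\rho$ with $\delta_{\rho,\env}$ on a generating set, hence is $\partial\delta_\rho - \delta_{\rho,\env}$-equivariant. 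Both are normal coactions of $\cU(\fG)$.

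The key mechanism is then a gauge-invariant uniqueness argument: a $\cU(\fG)$-equivariant surjective $*$-homomorphism between C*-algebras carrying normal coactions is injective as soon as it is injective on the fixed-point algebras. Concretely, normality gives faithful conditional expectations $E\colon C_r^*(I_l\ltimes\partial\Omega)\to C_r^*(I_l\ltimes\partial\Omega)^{\partial\delta_\rho}$ and $E_\env\colon C^*_\env(\A_r(\fC))\to C^*_\env(\A_r(\fC))^{\delta_{\rho,\env}}$ obtained by composing with $\id\otimes\lambda$ and then slicing by the canonical trace/expectation $C^*_\lambda(\cU(\fG))\to\Cz$. By equivariance, $\pi_\env\circ E = E_\env\circ\pi_\env$. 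If $a$ is a positive element in $\ker\pi_\env$, then $E(a)\in\ker\pi_\env$ as well, and $E(a)$ lies in the fixed-point algebra $C_r^*(I_l\ltimes\partial\Omega)^{\partial\delta_\rho}\cong C_r^*(\ker(\partial\kappa_\rho))$; if $\pi_\env$ is injective on this subalgebra then $E(a)=0$, and faithfulness of $E$ forces $a=0$. This proves one implication; the converse is immediate since the restriction of an injective map is injective.

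The steps, in order: (1) record that $\pi_\env$ is $\partial\delta_\rho - \delta_{\rho,\env}$-equivariant, using the generator formulas in Corollary~\ref{cor:lcscC*cover} and Corollary~\ref{cor:coactionEnvA(C)} together with Proposition~\ref{prop:5authors} to pass between reduced and normal coactions; (2) produce the faithful conditional expectations onto the fixed-point algebras from normality of the coactions, and verify $\pi_\env\circ E = E_\env\circ \pi_\env$; (3) run the gauge-invariant uniqueness argument above, invoking the identification $C_r^*(I_l\ltimes\partial\Omega)^{\partial\delta_\rho}\cong C_r^*(\ker(\partial\kappa_\rho))$ from \cite[Lemma~6.3]{CRST21}; (4) note the trivial converse.

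The main obstacle I anticipate is making the conditional expectation argument fully rigorous when $\cU(\fG)$ need not be amenable: one must be careful that the normal coaction $\delta_{\rho,\env}$ really does yield a faithful expectation onto its fixed-point algebra and that this expectation is compatible with $\pi_\env$ and with the identification of the fixed-point algebra of $\partial\delta_\rho$. This should follow from standard facts about normal coactions of discrete groups (the slice map $\id\otimes\tau$ through the canonical trace $\tau$ on $C^*_\lambda(\cU(\fG))$ gives a faithful expectation onto the spectral subspace for the identity, which is exactly the fixed-point algebra), but the equivariance bookkeeping between $\partial\Omega$, $C^*_\env(\A_r(\fC))$, and the group $\cU(\fG)$ — as opposed to the group used implicitly in the definition of $\pi_\env$ — needs to be stated precisely.
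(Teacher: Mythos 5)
Your proposal is correct and follows essentially the same route as the paper: the authors also use the equivariance of $\pi_\env$ with respect to the coactions from Proposition~\ref{prop:coactions} and Corollary~\ref{cor:coactionEnvA(C)}, form the commuting square with the faithful conditional expectations onto $C_r^*(\ker(\partial\kappa_\rho))$ (via \cite[Lemma~6.3]{CRST21}) and onto the unit fibre of $\delta_{\rho,\env}$, and conclude both implications from faithfulness of the expectations. The normality concern you raise is handled exactly as you suggest, since Theorem~\ref{thm:main} guarantees the coaction on the C*-envelope is normal and hence admits a faithful expectation onto its fixed-point algebra.
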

\begin{proof}
Consider the diagram
\[
\begin{tikzcd}
\label{eqn:denv}
 C_r^*(I_l\ltimes\partial\Omega)\arrow[d,"\partial E"]\arrow[rr,"\pi_\env"]    && C_\env^*(\A_r(\fC)) \arrow[d,"\Psi"]\\
C_r^*(\ker(\partial\kappa_\rho)) \arrow[rr,"\pi_\env\vert_{C_r^*(\ker(\partial\kappa_\rho))}"] && C_\env^*(\A_r(\fC))_e\nospacepunct{,}
\end{tikzcd}
\]
where $\partial E$ is the faithful conditional expectation associated with the coaction $\partial\delta_\rho$, and $\Psi$ is the faithful conditional expectation onto the unit fibre associated with the normal coaction $\delta_{\rho,\env}$ from Corollary~\ref{cor:coactionEnvA(C)}. This diagram commutes, so $\pi_{\env}$ is injective if and only if $\pi_\env\vert_{C_r^*(\ker(\partial\kappa_\rho))}$ is injective. 
\end{proof}

If $\ker(\partial\kappa_\rho)$ is second countable, then \cite[Theorem~3.1(a)]{BNRSW16} implies that $\pi_\env$ is injective on $C_r^*(\ker(\partial\kappa_\rho))$ if and only if it is injective on the C*-subalgebra of the interior of the isotropy subgroupoid of $\ker(\partial\kappa_\rho)$.
Since $\pi_\env$ is always injective on $C_0(\partial\Omega)$ by Lemma~\ref{lem:piinjondiagonal}, we obtain the following sufficient condition for $\pi_{\env}$ to be injective.

\begin{corollary}
\label{cor:intersectionprop}
Assume $\fC$ is a cancellative small category and that $I_l\ltimes\Omega$ is Hausdorff.
 If $C_0(\partial\Omega)$ detects ideals in $C_r^*(\ker(\partial\kappa_\rho))$ (e.g. if $\ker(\partial\kappa_\rho)$ is effective), then the map $\pi_\env$ from \eqref{eqn:pi} is injective.
 \end{corollary}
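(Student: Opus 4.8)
The plan is to combine the previous proposition with the general disintegration/faithfulness principle from \cite{BNRSW16}. Recall that the previous proposition reduces injectivity of $\pi_\env$ to injectivity of its restriction to the fixed-point algebra $C_r^*(\ker(\partial\kappa_\rho))$. So it suffices to show that this restriction is injective under the stated hypothesis. First I would assume that $C_0(\partial\Omega)$ detects ideals in $C_r^*(\ker(\partial\kappa_\rho))$. Suppose, for contradiction, that $\pi_\env\vert_{C_r^*(\ker(\partial\kappa_\rho))}$ is not injective, so it has a nontrivial kernel $\cI$. Since $\cI$ is a nontrivial ideal of $C_r^*(\ker(\partial\kappa_\rho))$ and $C_0(\partial\Omega)$ detects ideals, we must have $\cI \cap C_0(\partial\Omega) \neq 0$. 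But then $\pi_\env$ would fail to be injective on $C_0(\partial\Omega)$, contradicting Lemma~\ref{lem:piinjondiagonal}. Hence $\pi_\env\vert_{C_r^*(\ker(\partial\kappa_\rho))}$ is injective, and by the previous proposition $\pi_\env$ is injective.

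For the parenthetical remark, I would note that if $\ker(\partial\kappa_\rho)$ is effective, then $C_0(\partial\Omega)$ is a Cartan-type (more precisely, maximal abelian with the appropriate expectation) subalgebra of $C_r^*(\ker(\partial\kappa_\rho))$ whose conditional expectation is faithful, and for effective \'etale groupoids the diagonal always detects ideals — this is a standard consequence of the groupoid being topologically principal on a dense invariant set, see \cite[Theorem~7.2]{KKLRU} for the general criterion. So effectiveness of $\ker(\partial\kappa_\rho)$ implies the detection-of-ideals hypothesis, which is why it serves as a concrete sufficient condition.

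The main (and only real) subtlety here is making sure that the embedding $C_0(\partial\Omega) \hookrightarrow C_r^*(\ker(\partial\kappa_\rho))$ is the one to which Lemma~\ref{lem:piinjondiagonal} applies; that is, one should confirm that $C_0(\partial\Omega)$ sits inside the fixed-point algebra $C_r^*(\ker(\partial\kappa_\rho))$ as a C*-subalgebra in the natural way. This is immediate because the diagonal $C_0(\partial\Omega)$ is contained in the unit fiber for the coaction $\partial\delta_\rho$ (every unit $[\id_X,\chi]$ maps to $\bar\rho(\id_X) = e$), so $C_0(\partial\Omega) \subseteq C_r^*(I_l\ltimes\partial\Omega)^{\partial\delta_\rho} \cong C_r^*(\ker(\partial\kappa_\rho))$ compatibly, and $\pi_\env$ restricted to this copy of $C_0(\partial\Omega)$ agrees with the map treated in Lemma~\ref{lem:piinjondiagonal}. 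Once that compatibility is observed, the argument is just the short ideal-intersection contradiction above.
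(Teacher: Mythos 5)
Your argument is correct and is essentially the proof the paper intends (the corollary is left implicit there): combine the reduction of injectivity of $\pi_\env$ to its restriction on $C_r^*(\ker(\partial\kappa_\rho))$ from the preceding proposition with the ideal-detection hypothesis and Lemma~\ref{lem:piinjondiagonal}. Your check that $C_0(\partial\Omega)$ sits inside $C_r^*(\ker(\partial\kappa_\rho))$ compatibly with the map of Lemma~\ref{lem:piinjondiagonal} is the right point to verify, and the remark that effectiveness implies ideal detection matches the paper's cited criterion.
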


Let us show how Corollary~\ref{cor:intersectionprop} can be used to compute C*-envelopes for operator algebras arising from finitely aligned higher-rank graphs, and even $P$-graphs.
We include this to show the breadth of our results, though we expect Theorem~\ref{thm:Pgraphs} to be covered (using different methods) from Sehnem's work \cite[Theorem~5.1]{Seh22}.

Let $P$ be a submonoid of a group $G$ such that $P\cap P^{-1}=\{e\}$.
A \emph{$P$-graph} in the sense of \cite[Definition~8.1]{OP} (cf. \cite[Definition~6.1]{RW17} and \cite[Definition~2.1]{Brownlowe-Sims-Vittadello}) is a pair $(\fC,\deg)$, where $\fC$ is a finitely aligned countable category, and $\deg\colon\fC\to P$ is a functor satisfying the unique factorization property:
for every $c\in\fC$ and $p_1,p_2\in P$ with $\deg(c)=p_1p_1$, there exist unique $c_1,c_2\in\fC$ with $\mfd(c_1)=\mft(c_2)$ such that $\deg(c_1)=p_1$, $\deg(c_2)=p_2$, and $c=c_1c_2$.
If $(\fC,\deg)$ is a $P$-graph, then $\fC$ is cancellative and the units $\fC^0$ are the only invertible elements in $\fC$.
Thus, if $(\fC,\deg)$ is a finitely aligned $P$-graph, then $I_l\ltimes \Omega$ is Hausdorff by \cite[Corollary~4.2]{LiGarI}.

Given a category $\fC$, we let $\Env(\fC)$ be the enveloping groupoid of $\fC$ in the sense of \cite[Definition~II.3.3]{Doh15}, and let $\rho_u\colon\fC\to \Env(\fC)$ be the canonical functor (note that $\Env(\fC)$ is called the fundamental groupoid in \cite{BKQ}). The pair $(\Env(\fC),\rho_u)$ is uniquely determined up to isomorphism by the following universal property: for every functor $F\colon\fC\to \cH$, where $\cH$ is a groupoid, there exists a unique functor $\widetilde{F}\colon \Env(\fC)\to \cH$ such that $F=\rho_u\circ\widetilde{F}$.

\begin{theorem}
\label{thm:Pgraphs}
    Let $(\fC,\deg)$ be a finitely aligned $P$-graph, where $P$ is a group-embeddable monoid. Then, the map $\pi_\env\colon C_r^*(I_l\ltimes\partial\Omega)\to C^*_\env(\A_r(\fC))$ from \eqref{eqn:pi} is a *-isomorphism.
\end{theorem}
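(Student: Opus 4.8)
The plan is to invoke Corollary~\ref{cor:intersectionprop} with the functor $\rho_u\colon\fC\to\Env(\fC)$ and reduce the problem to checking that $C_0(\partial\Omega)$ detects ideals in $C_r^*(\ker(\partial\kappa_{\rho_u}))$; in fact I would aim to show that $\ker(\partial\kappa_{\rho_u})$ is effective, which is the cleanest sufficient condition. First, I would observe that because $P$ embeds in a group, we may replace $\deg$ by its composition with that embedding and so regard $\deg$ as a functor $\fC\to G$ with $G$ a group; by the universal property of $\Env(\fC)$ this factors through a homomorphism $\Env(\fC)\to G$ whose composite with $\rho_u$ is $\deg$. Transporting this to the level of the universal group $\cU(\Env(\fC))$ (which, since $\Env(\fC)$ carries the functor $\deg$ to a group, admits a homomorphism onto something compatible with $\deg$), I get that the degree functor refines $\bar{\rho_u}$ in the sense that $\ker(\partial\kappa_{\rho_u})$ is contained in the kernel of the analogous cocycle $c_{\deg}\colon I_l\ltimes\partial\Omega\to G$ coming from $\deg$. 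Actually, the more useful direction is the other one: I want to understand $\ker(\partial\kappa_{\rho_u})$ itself, so I would argue that on the boundary groupoid the universal-group cocycle $\kappa_{\rho_u}$ and the degree cocycle have the \emph{same} kernel. This is where the unique factorization property enters decisively.

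The key computation is to identify $\ker(\partial\kappa_{\rho_u})$ explicitly as a principal (indeed, trivial-isotropy) groupoid. Using the description of the boundary groupoid $I_l\ltimes\partial\Omega$ from \S~\ref{ss:prelimcats} and the fact that, for a finitely aligned $P$-graph, every element of $I_l^\times$ can be put in a standard form $c^{-1}d$ with $c,d\in\fC$ and $\mft(c)=\mft(d)$ (this is precisely the right LCM/finite alignment structure feeding through \eqref{eqn:spanning2}), a boundary groupoid element has the form $[c^{-1}d,\chi]$, and $\kappa_{\rho_u}([c^{-1}d,\chi]) = \bar{\rho_u}(c)^{-1}\bar{\rho_u}(d)$. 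An element lies in the kernel iff $\bar{\rho_u}(c)=\bar{\rho_u}(d)$, and since $\rho_u$ factors the degree functor, this forces $\deg(c)=\deg(d)$. Now I would use unique factorization: if $\deg(c)=\deg(d)$ and $[c^{-1}d,\chi]$ is a boundary element, I claim $c=d$ on a neighbourhood of $\chi$, so the element is a unit. Indeed, take any $x\in\dom(c^{-1}d)$ in the relevant constructible ideal; then $dx$ and $cx'$ share a common extension (by the meaning of membership in $\dom(c^{-1}d)$ together with finite alignment), and comparing degrees via unique factorization pins down $c^{-1}d$ to act as the identity near $\chi$. Hence $\ker(\partial\kappa_{\rho_u})$ is contained in the unit space, so it is (trivially) principal, hence effective. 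Then Corollary~\ref{cor:intersectionprop} applies and $\pi_\env$ is injective; combined with surjectivity from Corollary~\ref{cor:lcscC*cover}, $\pi_\env$ is a $*$-isomorphism.

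I expect the main obstacle to be the precise bookkeeping in the standard-form/unique-factorization argument showing $\ker(\partial\kappa_{\rho_u})$ sits inside the unit space — in particular, being careful that the identity $\deg(c)=\deg(d)$ together with the boundary/tightness condition on $\chi$ really does localize the equality $c=d$, rather than merely giving it after passing to common extensions that might leave the ideal in question. One has to use that $\chi$ lies on the boundary $\partial\Omega=\overline{\Omega_{\max}}$ (equivalently, is tight when $0\in I_l$, or is $\chi_\infty$ otherwise), so that $\chi(\dom(c^{-1}d))=1$ genuinely forces the existence of enough common right multiples; this is exactly the place where the $P$-graph hypothesis (finite alignment plus unique factorization) is doing real work, and where one cannot get away with the weaker ``cancellative with Hausdorff groupoid'' assumption. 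A secondary, more routine point is to make sure the passage from $\Env(\fC)$ to its universal group $\cU(\Env(\fC))$, and the corresponding identification of $\partial\kappa_{\rho_u}$ with the degree cocycle on kernels, is stated correctly; but since $\deg$ already takes values in a group, the universal property of $\cU(\Env(\fC))$ makes this formal.
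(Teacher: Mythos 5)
Your overall strategy coincides with the paper's: factor $\deg$ through $\rho_u\colon\fC\to\Env(\fC)$ via the universal property of the enveloping groupoid, analyze $\ker(\partial\kappa_{\rho_u})$, and conclude via Corollary~\ref{cor:intersectionprop} together with the surjectivity from Corollary~\ref{cor:lcscC*cover}. However, there is a genuine gap in the key step, coming from two related problems. First, the normal form for elements of the boundary groupoid is $[cd^{-1},\chi]$ with $\mfd(c)=\mfd(d)$ and $\chi(d\fC)=1$ (this is \cite[Lemma~2.19]{LiGarI}, which the paper invokes), not $[c^{-1}d,\chi]$: already for the free monoid on two generators, a germ of $cd^{-1}$ with $c,d$ incomparable words is not a germ of any $e^{-1}f$, so your normal form misses most of the groupoid. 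Second, and more seriously, the conclusion you aim for --- that $\ker(\partial\kappa_{\rho_u})$ is contained in the unit space --- is false in the generality of the theorem. The claim that $\deg(c)=\deg(d)$ together with $\chi\in\partial\Omega(d\fC)$ forces $c=d$ already fails for two parallel edges $c\neq d$ of degree $1$ in an ordinary directed graph; and for $2$-graphs whose vertex monoid is not group-embeddable (which the theorem is meant to cover), $\rho_u$ itself is not injective, so $\ker(\partial\kappa_{\rho_u})$ genuinely contains non-unit elements $[cd^{-1},\chi]$ with $\rho_u(c)=\rho_u(d)$ but $c\neq d$. The unique factorization property can only be used to compare $c$ and $d$ once they admit a common right multiple, and with the correct normal form the domain condition $\chi(d\fC)=1$ provides no such multiple. (With your form $c^{-1}d$ the domain condition does hand you $dx\in c\fC$, which is exactly why the error is camouflaged.)

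The repair is what the paper does: one does not need the kernel to equal the unit space, only that it be \emph{principal}, i.e., that its isotropy be trivial, and this weaker statement already suffices for Corollary~\ref{cor:intersectionprop}. So one restricts attention to isotropy elements $[cd^{-1},\chi]$ with $cd^{-1}.\chi=\chi$; this identity gives $1=\chi(d\fC)=\chi(dc^{-1}(c\fC\cap d\fC))$, hence in particular $c\fC\cap d\fC\neq\emptyset$, and only then does unique factorization (in the form of \cite[Lemma~8.2]{OP}) yield $c=d$ from $\deg(c)=\deg(d)$. You should rework your argument so that the common extension of $c$ and $d$ is extracted from the isotropy condition rather than from membership in the domain.
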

\begin{proof}
We show that $\ker(\partial\kappa_{\rho_u})$ is a principal groupoid (i.e., every element whose range and source coincide is a unit).
It follows that $C_0(\partial\Omega)$ detects ideals in $\ker(\partial\kappa_{\rho_u})$, so the conclusion would then follow from Corollary~\ref{cor:intersectionprop}.

Put $\partial\kappa\coloneq \partial\kappa_{\rho_u}$ and suppose $P$ embeds into a group $G$ with identity element $e$. 
Viewing $\deg$ as a functor from $\fC$ to $G$, the universal property of $\Env(\fC)$ implies the existence of a functor $\rho_u'\colon \Env(\fC)\to G$ such that the following diagram commutes:
\begin{equation}
 \begin{tikzcd}
 \label{diag:deg}
        \fC\arrow[rd,"\deg"'] \arrow[r,"\rho_u"]& \Env(\fC)\arrow[d,"\rho_u'"] \\
         & G\nospacepunct{.}
\end{tikzcd}
\end{equation}
Every element of $I_l\ltimes\partial\Omega$ can be written as $[cd^{-1},\chi]$ for some $c,d\in\fC$ with $\mfd(c)=\mfd(d)$ and $\chi\in\partial\Omega$ with $\chi(d\fC)=1$ (see, e.g., \cite[Lemma~4.8]{OP}). Take such an element $[cd^{-1},\chi]$ and suppose $\partial\kappa([cd^{-1},\chi])=e$. 
This means that $\rho_u(c)=\rho_u(d)$, so by commutativity of \eqref{diag:deg}, we have $\deg(c)=\deg(d)$. 
Assuming that $[cd^{-1},\chi]$ is isotropy, i.e., that $cd^{-1}.\chi=\chi$, we have $1=\chi(d\fC)=cd^{-1}.\chi(d\fC)=\chi(dc^{-1}(c\fC\cap d\fC))$. In particular, $c\fC\cap d\fC\neq\emptyset$, and \cite[Lemma~8.2]{OP} implies that  $c=d$.
Therefore, $[cd^{-1},\chi]$ is a unit, and this shows that $\ker(\partial\kappa)$ is principal.
\end{proof}

This result covers all finitely aligned higher-rank graphs.
It is interesting to note that even among 2-graphs with a single vertex there are examples of cancellative monoids that are not group-embeddable.
See, e.g., \cite[Example~7.1]{PQR04} and \cite[Example~11.13]{LV20}.

In the next subsection, we showcase another natural class of examples where $\pi_\env$ is injective, which are not covered by the class of product systems over group-embeddable monoids as in \cite{Seh22}.

\subsection{Groupoid-embeddable categories}
\label{ss:groupoid-embeddable}
Let $\fG$ be a discrete groupoid with range and source maps $\mfr$ and $\mfs$, respectively. 
By \cite[Theorem~6.10]{BKQ}, the universal group $\cU(\fG)$ of $\fG$ can be described as follows. 
Let $[u]\coloneq \mfr(\mfs^{-1}(u))$ denote the orbit of a unit $u\in \fG^0$ and let $\cR$ be a complete set of representatives for the set of orbits $\fG^{0}/\fG\coloneq\{[u]: u\in\fG^0\}$, so that $\fG=\bigsqcup_{u\in\cR}\fG_{[u]}$, where $\fG_{[u]}\coloneq\{g\in\fG : \mfs(g)\in [u]\}$. For $u,v\in\fG^0$, we put $\fG_u^v\coloneq \{g\in\fG : \mfs(g)=u,\mfr(g)=v\}$, and for each $u\in\cR$, we let $\cX_u\coloneq [u]\setminus\{u\}$. Then, \cite[Theorem~6.10]{BKQ} provides us with a group isomorphism
\[
\cU(\fG)\cong\Asterisk_{u\in\cR}\cU(\fG_{[u]}) \cong\Asterisk_{u\in\cR}(\Fz(\cX_u)\Asterisk \fG_u^u),
\] 
where $\Fz(\cX_u)$ is the free group on $\cX_u$. Moreover, the homomorphism $j_\fG\colon \fG\to \Asterisk_{u\in\cR}(\Fz(\cX_u)\Asterisk \fG_u^u)$ can be described as follows (see \cite[Proposition~6.8]{BKQ}).
For each $u\in\cR$ and $v\in [u]$, choose $\gamma_v\in \fG_u^v$. Then, 
\begin{equation}
\label{eqn:jmapBKQ}    
j_\fG(g)= \bar{z}(\gamma_z^{-1}g\gamma_y)\bar{y}^{-1},
\end{equation}
for all $g\in \fG_y^z$,
where $\bar{z},\bar{y}$ are the images of $z,y$ in $\Fz(\cX_u)$ for the unique element $u$ in $\cR$ such that $z,y\in [u]$. 
Note that $j_\fG$ generally depends on the various choices made above.
We shall identify $\cU(\fG)$ with $\Asterisk_{u\in\cR}(\Fz(\cX_u)\Asterisk \fG_u^u)$ via the isomorphism in equation \eqref{eqn:jmapBKQ}. 
Let $e$ be the identity element in $\cU(\fG)$.

\begin{lemma}
\label{lem:maptounivgroup}
Let $\fG$ be a discrete groupoid. Then, $j_\fG^{-1}(e)=\fG^0$, and $j_\fG$ is injective on $\fG\setminus\fG^0$.
\end{lemma}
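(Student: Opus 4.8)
The plan is to unwind the explicit description of $j_\fG$ given in \eqref{eqn:jmapBKQ} and check the two claims directly, fibre by fibre over the orbits. Fix $g \in \fG$ and let $u \in \cR$ be the unique representative with $\mfs(g), \mfr(g) \in [u]$; write $y = \mfs(g)$ and $z = \mfr(g)$, so $g \in \fG_y^z$ and $j_\fG(g) = \bar{z}\,(\gamma_z^{-1} g \gamma_y)\,\bar{y}^{-1}$, an element of the free product $\Fz(\cX_u) \Asterisk \fG_u^u$. Note that $\gamma_z^{-1} g \gamma_y \in \fG_u^u$ (it has source and range equal to $u$), so $j_\fG(g)$ has the form $\bar{z}\, h\, \bar{y}^{-1}$ with $h \in \fG_u^u$ and $\bar{y}, \bar{z}$ generators (or the identity, in case $y = u$ or $z = u$) of the free group $\Fz(\cX_u)$.

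First I would prove $j_\fG^{-1}(e) = \fG^0$. The inclusion $\fG^0 \subseteq j_\fG^{-1}(e)$ is immediate since $j_\fG$ is a groupoid homomorphism to a group, hence sends units to the identity. For the converse, suppose $j_\fG(g) = e$, i.e.\ $\bar{z}\, h\, \bar{y}^{-1} = e$ in $\Fz(\cX_u) \Asterisk \fG_u^u$. The key point is normal-form uniqueness in a free product: since $\bar{z}, \bar{y}^{-1}$ lie in the factor $\Fz(\cX_u)$ and $h$ lies in the factor $\fG_u^u$, the only way the product $\bar{z}\, h\, \bar{y}^{-1}$ can be trivial is if $h = u$ (the identity of $\fG_u^u$) and $\bar{z}\,\bar{y}^{-1} = e$ in $\Fz(\cX_u)$; the latter forces $\bar{z} = \bar{y}$ as elements of the free group $\Fz(\cX_u)$ on the basis $\cX_u = [u]\setminus\{u\}$, hence $y = z$ (both are either equal to $u$, giving the empty word, or equal as basis elements). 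So $y = z$ and $\gamma_z^{-1} g \gamma_y = u$, i.e.\ $g = \gamma_z u \gamma_y^{-1} = \gamma_y \gamma_y^{-1} = y$, a unit. This gives $j_\fG^{-1}(e) \subseteq \fG^0$.

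Next I would prove injectivity on $\fG \setminus \fG^0$. Suppose $g, g' \in \fG$ with $j_\fG(g) = j_\fG(g')$, and write $g \in \fG_y^z$, $g' \in \fG_{y'}^{z'}$ with associated representatives $u, u'$. Comparing the normal forms $\bar{z}\, h\, \bar{y}^{-1} = \overline{z'}\, h'\, \overline{y'}^{-1}$: first, these lie in $\Asterisk_{w \in \cR}(\Fz(\cX_w)\Asterisk \fG_w^w)$, and a nontrivial such element determines which factor(s) it involves, forcing $u = u'$ once we know $g, g' \notin \fG^0$ (the case where one side is trivial is handled by the first part). Within the single free product $\Fz(\cX_u) \Asterisk \fG_u^u$, uniqueness of normal form again forces $\bar{z} = \overline{z'}$, $h = h'$, and $\bar{y} = \overline{y'}$, hence $z = z'$, $y = y'$, and $\gamma_z^{-1} g \gamma_y = \gamma_{z'}^{-1} g' \gamma_{y'}$, which upon cancelling the $\gamma$'s (they are genuine groupoid elements and composable with $g$, $g'$) yields $g = g'$. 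One should also dispatch the mixed case where, say, $g \notin \fG^0$ but $g'$ would have to be a unit: then $j_\fG(g') = e$ by the first part, so $j_\fG(g) = e$ forces $g \in \fG^0$ by the first part, a contradiction.

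The main obstacle is being careful with the bookkeeping of normal forms in the iterated free product $\Asterisk_{w\in\cR}(\Fz(\cX_w) \Asterisk \fG_w^w)$ — specifically, justifying that the expression $\bar{z}\, h\, \bar{y}^{-1}$ is already in (or trivially reduces to) reduced form, handling the degenerate subcases $y = u$ and/or $z = u$ where some $\bar{y}, \bar{z}$ are empty words, and the subcase $h = u$ where the middle letter is trivial. In each of these degenerate situations one either gets a shorter reduced word or the identity, and the argument above goes through verbatim; I would organize the proof so that the generic reduced-word case is stated cleanly and the handful of degenerate cases are checked in a single remark. Everything else is a routine application of the Kurosh/normal-form theory for free products together with the explicit formula \eqref{eqn:jmapBKQ}.
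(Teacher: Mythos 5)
Your proposal is correct and follows essentially the same route as the paper: both unwind the explicit formula \eqref{eqn:jmapBKQ} and appeal to uniqueness of normal forms in the free product $\Fz(\cX_u)\Asterisk\fG_u^u$ (and across the outer free product over $\cR$) to force the letters $\bar z$, $\gamma_z^{-1}g\gamma_y$, $\bar y^{-1}$ to match up, with the same case split according to whether the middle letter $\gamma_z^{-1}g\gamma_y$ is a unit. Your treatment of the degenerate subcases ($y=u$, $z=u$, trivial middle letter) is, if anything, slightly more explicit than the paper's.
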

\begin{proof}
Let $y,z\in\fG^0$ and suppose $j_\fG(g)=e$ for some $g\in \fG_y^z$. 
Then, $\gamma_z^{-1}g\gamma_y=\bar{z}^{-1}\bar{y}$ in the group $\Fz(\cX_u)\Asterisk\cU(\fG_u^u)$, where $u$ is the unique element in $\cR$ such that $z,y\in [u]$. Since  $\gamma_z^{-1}g\gamma_y\in\fG_u^u$ and $\bar{z}^{-1}\bar{y}\in\Fz(\cX_u)$, and the subgroups $\fG_u^u$ and $\Fz(\cX_u)$ intersect trivially in $\Fz(\cX_u)\Asterisk\fG_u^u$, we deduce that $\bar{z}^{-1}\bar{y} = e = \gamma_z^{-1}g\gamma_y$. 
In particular, 
$\bar{y}=\bar{z}$, so that $y=z$ and $g=\gamma_y\gamma_y^{-1}=y$ is a unit. This shows that $j_{\fG}^{-1}(e) \subseteq \fC^0$. 
The reverse containment follows from the definition of $j_{\fG}$.

Next, we show that $j_{\fG}$ is injective on $\fG \setminus \fG^0$. Suppose $g,h\in \fG\setminus\fG^0$ are such that $j_\fG(g)=j_\fG(h)$. Then, $g\in \fG_y^z$ and $h\in\fG_x^w$ for some $x,y,w,z\in\fG^0$, and we have $j_\fG(g)= \bar{z}(\gamma_z^{-1}g\gamma_y)\bar{y}^{-1}$ and $j_\fG(h)= \bar{w}(\gamma_w^{-1}h\gamma_x)\bar{x}^{-1}$.

If $\gamma_z^{-1}g\gamma_y$ is a unit, so that $g= \gamma_z \gamma_y^{-1}$, then $j_\fG(g) = \bar{z}\bar{y}^{-1}$ is in $\Fz(\cX_u)$. By uniqueness of reduced words in free products, $\bar{w}(\gamma_w^{-1}h\gamma_x)\bar{x}^{-1}=j_\fG(h)=j_\fG(g)=\bar{z}\bar{y}^{-1}$ implies that $\gamma_w^{-1}h\gamma_x$ is a unit, so that $h=\gamma_w\gamma_x^{-1}$. Now $j_\fG(g) = j_\fG(h)$ simplifies to $\bar{z}\bar{y}^{-1}=\bar{w}\bar{x}^{-1}$, which implies that $z=w$ and $y=x$. Hence, $g=\gamma_y \gamma_y^{-1}$ and $h=\gamma_x^{-1}\gamma_x$ are units, which contradicts our assumptions on $g,h$.

As both $\gamma_z^{-1}g\gamma_y$ and $\gamma_z^{-1}g\gamma_z$ are not units, the uniqueness of reduced words in free products implies that $\bar{z}(\gamma_z^{-1}g\gamma_y)\bar{y}^{-1}=\bar{w}(\gamma_w^{-1}h\gamma_x)\bar{x}^{-1}$, so $\bar{z}=\bar{w}$ and $\overline{y} = \overline{x}$.
Now we have $\gamma_z^{-1}g\gamma_y=\gamma_w^{-1}h\gamma_x$.
This forces $z=w$ and $y=x$, so we actually get that $\gamma_z^{-1}g\gamma_y=\gamma_z^{-1}h\gamma_y$. Multiplying this last equation on the left by $\gamma_z$ and on the right by $\gamma_y^{-1}$ yields $g=h$.
\end{proof}

Recall the definition of $\bar{\rho}\colon I_l^\times\to \cU(\fG)$ from the discussion after the proof of Lemma~\ref{lem:partialhom}, as well as our identification of $\cJ^\times$ with the nonzero idempotents in $I^{\times}_l$.

Note that a groupoid-embeddable category is automatically small and cancellative.

\begin{lemma}
    \label{lem:idpure}
Let $\fC$ be a small category and suppose $\rho\colon \fC\to\fG$ is an embedding into a discrete groupoid $\fG$. 
The partial homomorphism $\bar{\rho}$ is idempotent pure in the sense that $\bar{\rho}^{-1}(e)=\cJ^\times$.
\end{lemma}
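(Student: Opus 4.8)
The plan is to show both inclusions $\cJ^\times \subseteq \bar\rho^{-1}(e)$ and $\bar\rho^{-1}(e) \subseteq \cJ^\times$. For the first inclusion, recall from Lemma~\ref{lem:partialhom} that $\tilde\rho(e_X) = \tilde\rho(\id_X)$ satisfies $\rho(\id_X(x)) = \tilde\rho(\id_X)\rho(x)$ for all $x \in X$; but $\id_X(x) = x$, so $\tilde\rho(\id_X)\rho(x) = \rho(x)$, and since $\rho$ maps into the groupoid $\fG$, cancelling $\rho(x)$ (which is a genuine groupoid element) forces $\tilde\rho(\id_X)$ to be the unit $\mft(\rho(x))$ of $\fG$. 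Applying $j_\fG$ and using $j_\fG(\fG^0) = \{e\}$ from Lemma~\ref{lem:maptounivgroup}, we get $\bar\rho(\id_X) = e$, which gives $\cJ^\times \subseteq \bar\rho^{-1}(e)$.

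For the reverse inclusion, I would take $s \in I_l^\times$ with $\bar\rho(s) = e$ and aim to show that $s$ is idempotent, i.e., that $s = \id_{\dom(s)}$. Pick any $x \in \dom(s)$. Then $\rho(s(x)) = \tilde\rho(s)\rho(x)$, and $j_\fG(\tilde\rho(s)) = \bar\rho(s) = e$, so by Lemma~\ref{lem:maptounivgroup} we have $\tilde\rho(s) \in \fG^0$; combined with the fact that $\tilde\rho(s)\rho(x)$ must be composable and equal $\rho(s(x))$, this gives $\tilde\rho(s) = \mft(\rho(x))$ and hence $\rho(s(x)) = \rho(x)$. Since $\rho$ is injective (here is where we use that $\fC$ is groupoid-\emph{embeddable}, not merely that there is a functor), we conclude $s(x) = x$ for every $x \in \dom(s)$, so $s = \id_{\dom(s)}$ is the idempotent corresponding to $\dom(s) \in \cJ^\times$. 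Thus $\bar\rho^{-1}(e) \subseteq \cJ^\times$.

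The main subtlety to get right is the bookkeeping around units in the groupoid $\fG$: one must be careful that $\tilde\rho(s)$ is a well-defined element of $\fG$ (guaranteed by Lemma~\ref{lem:partialhom}), that the equation $\rho(s(x)) = \tilde\rho(s)\rho(x)$ is an equation of composable morphisms, and that $j_\fG^{-1}(e) = \fG^0$ lets us pass from $\bar\rho(s) = e$ back to $\tilde\rho(s)$ being a unit rather than merely an element of $\ker j_\fG$. The point that $\rho$ being injective is genuinely used — and cannot be weakened to an arbitrary functor — should be flagged, since it is precisely what distinguishes the groupoid-embeddable setting from the general functorial setting of \S~\ref{ss:functors->coactions}. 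Beyond that, the argument is a short direct computation and I do not anticipate a serious obstacle.
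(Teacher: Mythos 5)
Your proposal is correct and follows essentially the same route as the paper: for the substantive inclusion $\bar\rho^{-1}(e)\subseteq\cJ^\times$ the paper likewise passes from $\bar\rho(s)=e$ to $\tilde\rho(s)\in\fG^0$ via Lemma~\ref{lem:maptounivgroup}, then uses $\rho(s(x))=\tilde\rho(s)\rho(x)=\rho(x)$ and injectivity of $\rho$ to conclude $s(x)=x$. The only difference is that you also spell out the easy containment $\cJ^\times\subseteq\bar\rho^{-1}(e)$ (which the paper leaves implicit, and which also follows at once from $\bar\rho(\id_X)$ being an idempotent in the group $\cU(\fG)$).
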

\begin{proof}
Let $s\in I_l^\times$ with $\bar{\rho}(s)=e$. Then, $j_\fG\circ\tilde{\rho}(s)=e$, and by Lemma~\ref{lem:maptounivgroup} we get that $\tilde{\rho}(s)\in\fG^0$. 
Using Lemma~\ref{lem:partialhom}, we have that $\rho(s(x))=\tilde{\rho}(s)\rho(x)=\rho(x)$ for all $x\in\dom(s)$.
Since $\rho$ is injective, it follows that $s(x)=x$ for all $x\in\dom(s)$, so that $s\in\cJ^\times$.
\end{proof}

Applying \cite[Lemma~5.5.22]{CELY} (cf. also \cite{MS14}) yields the following consequence. 

\begin{proposition}
    \label{prop:idpure}
    Let $\fC$ be a small category and suppose $\rho\colon \fC\to\fG$ is an embedding into a discrete groupoid $\fG$. 
    Then, there is a canonical partial action of $\cU(\fG)$ on $\Omega$ and an isomorphism of topological groupoids 
    \begin{equation}
        \label{eqn:gpoidisom}
    I_l\ltimes\Omega\cong \cU(\fG)\ltimes\Omega,\quad [s,\chi]\mapsto (\bar{\rho}(s),\chi).
     \end{equation}
    In particular, $I_l\ltimes\Omega$ is Hausdorff.
\end{proposition}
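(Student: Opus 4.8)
The plan is to invoke \cite[Lemma~5.5.22]{CELY} (see also \cite{MS14}), which says that whenever one has an idempotent pure partial homomorphism from the left inverse hull of a left cancellative structure into a group, the transformation groupoid of the inverse semigroup action on the spectrum is isomorphic to a transformation groupoid for a canonical partial action of that group. Since Lemma~\ref{lem:idpure} gives exactly that $\bar\rho\colon I_l^\times\to\cU(\fG)$ is idempotent pure, all the hypotheses of \cite[Lemma~5.5.22]{CELY} are met, and the statement follows immediately once we match up the notation. Concretely, the partial action of $\cU(\fG)$ on $\Omega$ is the one constructed in \cite[Lemma~5.5.22]{CELY}: for $g\in\cU(\fG)$, the domain of the partial homeomorphism indexed by $g$ is the (open) set of $\chi\in\Omega$ for which there exists $s\in\bar\rho^{-1}(g)$ with $\chi(\dom(s))=1$, and on the fibre over such $\chi$ the action agrees with $s.\chi$ for any such $s$; idempotent purity is precisely what guarantees this is well defined and independent of the choice of $s$.

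First I would recall the construction of the partial action and then verify that the map $[s,\chi]\mapsto(\bar\rho(s),\chi)$ is well defined: if $[s,\chi]=[t,\chi]$ in $I_l\ltimes\Omega$, then $s\circ\id_X=t\circ\id_X\neq 0$ for some $X\in\cJ^\times$ with $\chi(X)=1$, and since $\bar\rho$ is a partial homomorphism with $\bar\rho(\id_X)=e$ (by idempotent purity), we get $\bar\rho(s)=\bar\rho(s\circ\id_X)=\bar\rho(t\circ\id_X)=\bar\rho(t)$. This is essentially the computation already carried out in the proof of Lemma~\ref{lem:kappa_rho}. Next I would check that the map is a continuous groupoid homomorphism — again this is the content of Lemma~\ref{lem:kappa_rho} combined with the identity on $\Omega$ in the second coordinate — and that it is a bijection onto $\cU(\fG)\ltimes\Omega$: surjectivity holds because every $g$ appearing in the partial action has, by construction, a witnessing $s\in\bar\rho^{-1}(g)$; injectivity holds because $(\bar\rho(s),\chi)=(\bar\rho(t),\chi)$ forces $\bar\rho(s^{-1}t)$ (suitably restricted) to be idempotent, hence $s^{-1}t$ restricts to an idempotent near $\chi$ by Lemma~\ref{lem:idpure}, i.e.\ $s$ and $t$ agree on a neighbourhood in $\chi^{-1}(1)$, which is exactly $[s,\chi]=[t,\chi]$. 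Finally, since both range and source maps are local homeomorphisms and the map is a continuous algebraic isomorphism carrying basic bisections $[s,U]$ to the basic bisections of the partial transformation groupoid, it is a homeomorphism, hence an isomorphism of topological groupoids. The last sentence, that $I_l\ltimes\Omega$ is Hausdorff, then follows because a transformation groupoid $\cU(\fG)\ltimes\Omega$ of a partial action of a discrete group on a (locally compact) Hausdorff space is automatically Hausdorff.

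The main obstacle — really the only non-bookkeeping point — is confirming that the setting of \cite[Lemma~5.5.22]{CELY} literally applies to left inverse hulls of left cancellative small \emph{categories} rather than just monoids, and that idempotent purity in the sense of Lemma~\ref{lem:idpure} ($\bar\rho^{-1}(e)=\cJ^\times$) is the exact hypothesis used there. If there is any mismatch (for instance, if \cite{CELY} phrases idempotent purity as $\bar\rho^{-1}(e)=E(I_l^\times)$, the idempotent semilattice), I would note that $\cJ^\times$ is canonically identified with the nonzero idempotents of $I_l^\times$, as recalled just before Lemma~\ref{lem:idpure}, so the two formulations coincide. Everything else is the routine verification sketched above, most of which has already been done in Lemmas~\ref{lem:partialhom} and \ref{lem:kappa_rho}.
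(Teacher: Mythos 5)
Your proposal is correct and follows exactly the paper's route: the paper's entire proof consists of the single line ``Applying \cite[Lemma~5.5.22]{CELY} (cf.\ also \cite{MS14}) yields the following consequence,'' relying precisely on the idempotent purity of $\bar\rho$ from Lemma~\ref{lem:idpure} as you do. Your additional verifications (well-definedness, bijectivity, the homeomorphism on basic bisections, and the Hausdorffness of a partial transformation groupoid of a discrete group) are exactly the bookkeeping the paper leaves implicit, and your caveat about transporting the lemma from monoids to small categories is a reasonable point of care that the paper glosses over.
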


Note that the isomorphism $I_l\ltimes\Omega\cong \cU(\fG)\ltimes\Omega$ from \eqref{eqn:gpoidisom} restricts to an isomorphism $I_l\ltimes\partial\Omega\cong \cU(\fG)\ltimes\partial\Omega$, so we obtain the following corollary, which extends \cite[Proposition 3.10]{Li:IMRN} from submonoids of groups to subcategories of groupoids.

\begin{corollary}
\label{cor:idpure}
Let $\fC$ be a small category that embeds in a groupoid $\fG$. Then, there is a canonical partial action of $\cU(\fG)$ on $\Omega$ and canonical *-isomorphisms 
\[
C_r^*(I_l\ltimes\Omega)\cong C_0(\Omega)\rtimes^r\cU(\fG) \quad\text{ and }\quad C_r^*(I_l\ltimes\partial\Omega)\cong C_0(\partial\Omega)\rtimes^r\cU(\fG).
\]
\end{corollary}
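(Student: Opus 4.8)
The plan is to deduce Corollary~\ref{cor:idpure} directly from Proposition~\ref{prop:idpure} together with the standard correspondence between transformation groupoid C*-algebras and partial crossed products. First I would invoke Proposition~\ref{prop:idpure} to obtain the isomorphism of topological groupoids $I_l\ltimes\Omega\cong \cU(\fG)\ltimes\Omega$ sending $[s,\chi]\mapsto (\bar\rho(s),\chi)$, where $\cU(\fG)\ltimes\Omega$ denotes the transformation groupoid of the canonical partial action of $\cU(\fG)$ on $\Omega$ supplied by that proposition (this is where \cite[Lemma~5.5.22]{CELY} and the idempotent-purity of $\bar\rho$ from Lemma~\ref{lem:idpure} enter). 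An isomorphism of locally compact Hausdorff \'etale groupoids induces a canonical $*$-isomorphism of their reduced C*-algebras, since it carries $C_c$ of one onto $C_c$ of the other compatibly with convolution, involution, and all the left regular representations $\rho_\chi$; hence $C_r^*(I_l\ltimes\Omega)\cong C_r^*(\cU(\fG)\ltimes\Omega)$.

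Next I would identify $C_r^*(\cU(\fG)\ltimes\Omega)$ with the reduced partial crossed product $C_0(\Omega)\rtimes^r\cU(\fG)$. This is a well-known fact: for a partial action of a discrete group $\Gamma$ on a locally compact Hausdorff space $X$, the reduced C*-algebra of the associated transformation groupoid $\Gamma\ltimes X$ is canonically $*$-isomorphic to the reduced partial crossed product $C_0(X)\rtimes^r\Gamma$ (see, e.g., the Abadie--Exel correspondence between partial actions and \'etale groupoids, or \cite[\S~5.6]{CELY} and \cite{Exel:comb}). Applying this with $\Gamma=\cU(\fG)$ and $X=\Omega$ gives $C_r^*(I_l\ltimes\Omega)\cong C_0(\Omega)\rtimes^r\cU(\fG)$. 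The canonical nature of all these identifications — tracking a spanning element $1_{[s,\Omega(\dom(s))]}$ to $1_{[\bar\rho(s),\Omega(\dom(s))]}$ in the transformation-groupoid picture and then to the partial-crossed-product generator $1_{\Omega(\dom(s))}\,\delta_{\bar\rho(s)}$ — is what makes the stated isomorphism ``canonical''.

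For the boundary version, I would observe, as noted in the paragraph immediately preceding the corollary, that the groupoid isomorphism in \eqref{eqn:gpoidisom} restricts to an isomorphism $I_l\ltimes\partial\Omega\cong \cU(\fG)\ltimes\partial\Omega$. Indeed $\partial\Omega$ is a closed $I_l$-invariant subset of $\Omega$, and under the identification $[s,\chi]\mapsto(\bar\rho(s),\chi)$ it is also invariant for the partial action of $\cU(\fG)$, so the restriction is an isomorphism of \'etale groupoids. Repeating the two steps above with $\partial\Omega$ in place of $\Omega$ then yields $C_r^*(I_l\ltimes\partial\Omega)\cong C_0(\partial\Omega)\rtimes^r\cU(\fG)$.

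The main obstacle — really the only substantive point — is already absorbed into Proposition~\ref{prop:idpure}, namely establishing that $\bar\rho$ being idempotent pure forces the transformation groupoid $I_l\ltimes\Omega$ to coincide with a genuine \emph{partial group} action groupoid rather than merely an inverse-semigroup action groupoid; once that structural fact is in hand, the passage to reduced partial crossed products is a standard dictionary, and the remaining work is the routine bookkeeping of checking that the identifications are compatible on the dense $*$-subalgebras and hence extend to the reduced completions. I would therefore keep the proof short: cite Proposition~\ref{prop:idpure} for the groupoid isomorphism, cite the partial-action/\'etale-groupoid correspondence (e.g.\ \cite[\S~5.6]{CELY}) for $C_r^*(\Gamma\ltimes X)\cong C_0(X)\rtimes^r\Gamma$, and note that both restrict to $\partial\Omega$.
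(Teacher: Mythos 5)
Your proposal is correct and follows essentially the same route as the paper, which derives the corollary directly from the groupoid isomorphism of Proposition~\ref{prop:idpure}, the standard identification of the reduced C*-algebra of a partial transformation groupoid with the reduced partial crossed product, and the observation (stated just before the corollary) that the isomorphism \eqref{eqn:gpoidisom} restricts to $I_l\ltimes\partial\Omega\cong\cU(\fG)\ltimes\partial\Omega$. The paper treats all of this as immediate and gives no further argument, so your slightly more explicit tracking of spanning elements is, if anything, a modest elaboration of the same proof.
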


The following is the main result of this subsection, and answers Li's Question~\ref{ques:Li} in the affirmative for the class of groupoid-embeddable categories.

\begin{theorem}
\label{thm:gpoid-embeddable}
Let $\fC$ be a small category that embeds in a groupoid $\fG$. 
Then, the associated groupoid $I_l\ltimes\Omega$ is Hausdorff, and the map $\pi_\env\colon C_r^*(I_l\ltimes\partial\Omega)\to C^*_\env(\A_r(\fC))$ from \eqref{eqn:pi} is a *-isomorphism. That is, the C*-envelope $C^*_\env(\A_r(\fC))$ coincides with the boundary quotient C*-algebra $C_r^*(I_l\ltimes\partial\Omega)$.
\end{theorem}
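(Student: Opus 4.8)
The plan is to apply the machinery developed in the preceding subsections: we already know from Proposition~\ref{prop:idpure} that $I_l\ltimes\Omega$ is Hausdorff when $\fC$ embeds in a groupoid $\fG$, and from Theorem~\ref{thm:ciAr} and Corollary~\ref{cor:lcscC*cover} that there is a canonical surjection $\pi_\env\colon C_r^*(I_l\ltimes\partial\Omega)\to C^*_\env(\A_r(\fC))$. Thus the entire task reduces to proving that $\pi_\env$ is injective, and the natural route is Corollary~\ref{cor:intersectionprop}: it suffices to produce a functor $\rho\colon\fC\to\fG$ to a groupoid such that $C_0(\partial\Omega)$ detects ideals in $C_r^*(\ker(\partial\kappa_\rho))$. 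The obvious candidate is the embedding $\rho$ itself (so $\fG$ is the groupoid $\fC$ embeds into, not its enveloping groupoid), and the key point will be to identify $\ker(\partial\kappa_\rho)$ explicitly and show it is principal, which is the strongest form of "detecting ideals''.

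First I would use Lemma~\ref{lem:idpure}: since $\rho$ is an embedding, the partial homomorphism $\bar\rho\colon I_l^\times\to\cU(\fG)$ is idempotent pure, meaning $\bar\rho^{-1}(e)=\cJ^\times$. Combined with the description of $\kappa_\rho$ from Lemma~\ref{lem:kappa_rho} (and its restriction $\partial\kappa_\rho$ to $I_l\ltimes\partial\Omega$), this should force $\ker(\partial\kappa_\rho)=\{[s,\chi]\in I_l\ltimes\partial\Omega : \bar\rho(s)=e\}$ to consist only of classes $[s,\chi]$ where $s$ acts as the identity on a constructible ideal in $\chi^{-1}(1)$; but such a class is a unit of the groupoid. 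Hence $\ker(\partial\kappa_\rho)$ is exactly the unit space $\partial\Omega$, which is trivially a principal groupoid, so $C_0(\partial\Omega)$ certainly detects ideals in it (indeed $C_r^*(\ker(\partial\kappa_\rho))=C_0(\partial\Omega)$). Then Corollary~\ref{cor:intersectionprop} gives injectivity of $\pi_\env$, and together with surjectivity from Corollary~\ref{cor:lcscC*cover} we conclude $\pi_\env$ is a $*$-isomorphism.

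Alternatively, and perhaps more transparently, one can phrase the injectivity step via the isomorphism $I_l\ltimes\partial\Omega\cong\cU(\fG)\ltimes\partial\Omega$ from Corollary~\ref{cor:idpure}: under the coaction $\partial\delta_\rho$ of $\cU(\fG)$, the fixed-point algebra $C_r^*(I_l\ltimes\partial\Omega)^{\partial\delta_\rho}\cong C_r^*(\ker(\partial\kappa_\rho))$ collapses to $C_0(\partial\Omega)$ precisely because $\ker(\partial\kappa_\rho)$ is the unit space, and $\pi_\env$ is injective on $C_0(\partial\Omega)$ by Lemma~\ref{lem:piinjondiagonal}; the commuting-square argument with the faithful conditional expectations (as in the proof of the Proposition preceding Corollary~\ref{cor:intersectionprop}) then propagates injectivity from the fixed-point algebra to all of $C_r^*(I_l\ltimes\partial\Omega)$.

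I expect the main obstacle to be the bookkeeping around $\ker(\partial\kappa_\rho)$: one must be careful that "$\bar\rho(s)=e$'' on a class $[s,\chi]\in I_l\ltimes\partial\Omega$ really does imply $s$ restricts to an idempotent on some $X\in\cJ^\times$ with $\chi(X)=1$ — here idempotent-purity of $\bar\rho$ (Lemma~\ref{lem:idpure}) together with the definition of the equivalence relation $\sim$ on $I_l*\partial\Omega$ does the work, but one should verify that the ideal witnessing idempotency can be taken inside $\chi^{-1}(1)$, using that $\chi(\dom(s))=1$ and the structure of $\cJ$. Everything else is a direct invocation of the cited results; no new estimates are needed.
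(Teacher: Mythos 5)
Your proposal is correct and follows essentially the same route as the paper: Hausdorffness and the identification $I_l\ltimes\partial\Omega\cong\cU(\fG)\ltimes\partial\Omega$ come from Proposition~\ref{prop:idpure}/Corollary~\ref{cor:idpure} (resting on idempotent purity, Lemma~\ref{lem:idpure}), the kernel of $\partial\kappa_\rho$ collapses to the unit space so that $C_r^*(\ker(\partial\kappa_\rho))=C_0(\partial\Omega)$, and Lemma~\ref{lem:piinjondiagonal} together with Corollary~\ref{cor:intersectionprop} yields injectivity. The ``bookkeeping obstacle'' you flag is in fact immediate: $\kappa_\rho([s,\chi])=e$ gives $\bar\rho(s)=e$, hence $s\in\cJ^\times$ by Lemma~\ref{lem:idpure}, so $[s,\chi]$ is already a unit with no need to pass to a restriction.
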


\begin{proof}
Let $\rho\colon \fC \to \fG$ be an embedding into a groupoid $\fG$ so that $\fC$ is cancellative.
By Corollary~\ref{cor:idpure}, $I_l\ltimes\Omega$ is Hausdorff, and we have an isomorphism of topological groupoids $I_l\ltimes\partial\Omega\cong \cU(\fG)\ltimes\partial\Omega$ that carries $\partial\kappa_\rho$ to the homomorphism on $\cU(\fG)\ltimes\partial\Omega$ given by projecting onto $\cU(\fG)$.
Under this isomorphism, the coaction of $\cU(\fG)$ on $C_r^*(\cU(\fG)\ltimes\partial\Omega)$ from Proposition~\ref{prop:coactions} is carried to the canonical coaction of $\cU(\fG)$ on $C_r^*(\cU(\fG)\ltimes\partial\Omega)\cong C_0(\partial\Omega)\rtimes^r \cU(\fG)$, and the fixed point algebra of this latter coaction is $C_0(\partial\Omega)$ (see, e.g., \cite[Lemma~6.3]{CRST21}). 
By Lemma~\ref{lem:piinjondiagonal}, $\pi_\env$ is injective on $C_0(\partial\Omega)$, so by Corollary~\ref{cor:intersectionprop} we conclude that $\pi_\env$ is injective.
\end{proof}

\begin{examples}
By Ore's theorem, every left Ore category can be emdedded into a groupoid, see \cite[Proposition~II.3.11]{Doh15}. Special classes of Ore categories have been studied recently in the setting of Thompson's groups, see \cite{Wit19, SWZ19}, and  non-Ore groupoid-embeddable categories associated with graphs of groups have been studied in \cite{Hig} and \cite{LW17}. See, e.g., \cite{Jo08} for more on groupoid-embeddability of small categories.
\end{examples}

\section{Right LCM monoids} \label{s:right-LCM}

Recall that a monoid $P$ is right LCM if it is left cancellative and for all $c,d\in P$, we have that $cP\cap dP$ is either empty or of the form $rP$ for some $r\in P$. Right LCM monoids have been studied intensely by many operator algebraists, see, e.g., \cite{BRRW14,St15, ABLS19, BLRS20, LOS21, LS22, NS22, St22, KKLL22b}. 
In this section (Theorem~\ref{thm:lcm-monoids}), we identify the C*-envelope of all right LCM monoids. This includes examples which are not group embeddable, including the class of Malcev type monoids from \cite{EH+}, as well as a class of gcd-monoids introduced by Dehornoy in \cite{Doh17I, Doh17I} which contains examples which are not group-embeddable (see \cite[Proposition~4.3]{DW17}). Hence, our result applies to a large class of monoids that need not be group-embeddable, and are therefore not covered by previous works (for instance \cite{Seh22}).

Let $P$ be a cancellative right LCM monoid. Then, $I_l\ltimes\Omega$ is Hausdorff by \cite[Corollary~4.2]{LiGarI}. Following \cite[\S~4]{St15} (which generalizes \cite[Definition 5.4]{CL07}), we let
\[
P_0\coloneq\{c\in P : cP\cap dP\neq\emptyset\text{ for all }d\in P\}
\] 
be the \emph{core} submonoid of $P$. We let 
\[
I_l^0\coloneq\{cd^{-1}\in I_l : c,d\in P_0\},
\]
so that $I_l^0$ is a sub inverse semigroup of $I_l$, and $0\notin I_l^0$. Consider the groupoid 
\[
I_l^0\ltimes\partial\Omega\coloneq \{[cd^{-1},\chi]\in I_l\ltimes\partial\Omega : c,d\in P_0\}.
\]
Then $I_l^0\ltimes\partial\Omega$ is an open subgroupoid of $I_l\ltimes\partial\Omega$, so the canonical inclusion map $C_c(I_l^0\ltimes\partial\Omega)\to C_c(I_l\ltimes\partial\Omega)$ extends to an injective *-homomorphism $C_r^*(I_l^0\ltimes\partial\Omega)\to C_r^*(I_l\ltimes\partial\Omega)$. We henceforth identify $C_r^*(I_l^0\ltimes\partial\Omega)$ with its image in $C_r^*(I_l\ltimes\partial\Omega)$. Since $P_0$ is right Ore (left reversible), the canonical homomorphism $\rho_0\colon P_0\to \cU(P_0)$ is injective, and $\cU(P_0)=\{\rho_0(c)\rho_0(d)^{-1} :c,d\in P\}$.

\begin{lemma}
\label{lem:kappa_0}
There is a continuous groupoid homomorphism
\begin{equation}
    \label{eqn:kappa_0}
\kappa_0\colon I_l^0\ltimes\partial\Omega\to \cU(P_0),\quad [cd^{-1},\chi]\mapsto \rho_0(c)\rho_0(d)^{-1}.
\end{equation}
Moreover, $\ker(\kappa_0)=\partial\Omega$.
\end{lemma}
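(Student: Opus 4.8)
My plan is to build $\kappa_0$ in complete parallel with the construction of $\kappa_\rho$ in Lemma~\ref{lem:kappa_rho}: first produce a partial homomorphism $\bar\rho_0\colon I_l^0\to\cU(P_0)$ satisfying $\bar\rho_0(cd^{-1})=\rho_0(c)\rho_0(d)^{-1}$ for all $c,d\in P_0$, then set $\kappa_0([s,\chi])\coloneq\bar\rho_0(s)$, and finally read off the kernel from the injectivity of $\rho_0$.

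\emph{Construction of $\bar\rho_0$.} Since $P_0$ is right Ore, $\cU(P_0)$ is its group of right fractions and $\rho_0$ is injective, so it suffices to check that $cd^{-1}\mapsto\rho_0(c)\rho_0(d)^{-1}$ is well defined and multiplicative on $I_l^0$; alternatively, one identifies $I_l^0$ with the left inverse hull of the monoid $P_0$ and invokes Lemma~\ref{lem:partialhom} with the functor $\rho_0\colon P_0\to\cU(P_0)$. For well-definedness, if $cd^{-1}=c'd'^{-1}$ as partial bijections of $P$, then equality of domains forces $d'=dp$ for a unit $p\in P$, and evaluating the two maps gives $c'=cp$; as every unit of $P$ lies in $P_0$, applying $\rho_0$ yields $\rho_0(c')\rho_0(d')^{-1}=\rho_0(c)\rho_0(p)\rho_0(p)^{-1}\rho_0(d)^{-1}=\rho_0(c)\rho_0(d)^{-1}$. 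For multiplicativity, take $s=cd^{-1}$ and $t=c'e^{-1}$ in $I_l^0$; since $I_l^0$ is an inverse subsemigroup, $st$ is again a fraction over $P_0$, and writing $dP\cap c'P=rP$ with $r=da=c'b$ and $a,b\in P_0$ (using that $P$ is right LCM and that the core is closed under the relevant lcm operations, cf.\ \cite{St15}), a direct computation of partial bijections gives $st=(ca)(eb)^{-1}$, whence $\bar\rho_0(st)=\rho_0(ca)\rho_0(eb)^{-1}=\rho_0(c)\rho_0(d)^{-1}\rho_0(c')\rho_0(e)^{-1}=\bar\rho_0(s)\bar\rho_0(t)$, the middle equality being a rearrangement of $\rho_0(da)=\rho_0(c'b)$. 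I expect this multiplicativity check---i.e., that fraction arithmetic inside $I_l^0$ is faithfully mirrored in $\cU(P_0)$, which hinges on the compatibility of the LCM/Ore structure of the core $P_0$ with that of $P$---to be the main obstacle; the remaining steps are routine.

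\emph{Construction of $\kappa_0$.} With $\bar\rho_0$ available, I would argue exactly as in Lemma~\ref{lem:kappa_rho}. The formula $\kappa_0([s,\chi])=\bar\rho_0(s)$ is well defined on germs: if $[s,\chi]=[t,\chi]$ with $s,t\in I_l^0$, then $s$ and $t$ agree on some $X\in\cJ$ with $\chi(X)=1$, hence (using that $P$ is right cancellative) on all of $\dom(s)\cap\dom(t)=rP$ for some $r\in P_0$ with $\chi(rP)=1$; since $\bar\rho_0(\id_{rP})=\bar\rho_0(rr^{-1})=e$, we get $\bar\rho_0(s)=\bar\rho_0(s\,\id_{rP})=\bar\rho_0(t\,\id_{rP})=\bar\rho_0(t)$. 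It is a groupoid homomorphism because $\bar\rho_0$ is multiplicative and $[t,s.\chi][s,\chi]=[ts,\chi]$, and it is continuous because, for each $g\in\cU(P_0)$, $\kappa_0^{-1}(g)=\bigcup\{[s,\partial\Omega(\dom(s))]:s\in I_l^0,\ \bar\rho_0(s)=g\}$ is a union of basic compact open bisections.

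\emph{The kernel.} Since $\cU(P_0)$ has a single unit, every groupoid homomorphism into it sends the unit space to $e$, so $\partial\Omega\subseteq\ker(\kappa_0)$. Conversely, if $[cd^{-1},\chi]\in\ker(\kappa_0)$ with $c,d\in P_0$, then $\rho_0(c)=\rho_0(d)$, and injectivity of $\rho_0$ forces $c=d$; hence $cd^{-1}=\id_{cP}$ and $[cd^{-1},\chi]=[\id_{cP},\chi]=\chi\in\partial\Omega$. Therefore $\ker(\kappa_0)=\partial\Omega$.
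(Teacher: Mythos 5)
Your proposal is correct and follows essentially the same route as the paper: the well-definedness, multiplicativity (via the right LCM identity $da=c'b$ and cancellation in $\cU(P_0)$), continuity, and kernel arguments all match, the only difference being that you first build the partial homomorphism $\bar\rho_0$ on $I_l^0$ in parallel with Lemma~\ref{lem:kappa_rho} and then descend to germs, whereas the paper verifies everything directly on germs $[cd^{-1},\chi]$. The step you flagged as the main obstacle --- that the core $P_0$ is closed under the relevant LCM operations so that $a,b$ may be taken in $P_0$ --- does hold and is implicitly used in the paper's computation as well.
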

\begin{proof}
If $[cd^{-1},\chi]=[ab^{-1},\chi]$, then there exists $X\in\chi^{-1}(1)$ such that $X\cap dP\cap bP \neq\emptyset$ and $cd^{-1}$ and $ab^{-1}$ agree on $X\cap dP\cap bP$. Thus, there exists $x,y,z\in P$ such that $z=dx=by$ and $cd^{-1}(z)=ab^{-1}(z)$, i.e., $cx=ay$. Now we have
\[
\rho_0(c)\rho_0(d)^{-1}=\rho_0(c)\rho_0(x)\rho_0(x)^{-1}\rho_0(d)^{-1}=\rho_0(cx)\rho_0(dx)^{-1}=\rho_0(ay)\rho_0(by)^{-1}=\rho_0(a)\rho_0(b)^{-1}.
\]
Thus, we get a well-defined map $\kappa_0$ as in \eqref{eqn:kappa_0}. If $\chi\in\partial\Omega$ and $cd^{-1},ab^{-1}\in I_l^0$, then a short computation yields
\[
[cd^{-1},ab^{-1}.\chi][ab^{-1},\chi]=[cd^{-1}ab^{-1},\chi]=[cx(by)^{-1},\chi],
\]
where $x,y\in P$ satisfy $dx=ay$. We have
\begin{align*}
\kappa_0([cx(by)^{-1},\chi])=\rho_0(cx)\rho_0(by)^{-1}&=\rho_0(c)\rho_0(d)^{-1}\rho_0(d)\rho_0(x)\rho(y)^{-1}\rho_0(a)^{-1}\rho_0(a)\rho_0(b)^{-1}\\
&=\rho_0(c)\rho_0(d)^{-1}\rho_0(dx)\rho_0(ay)^{-1}\rho_0(a)\rho_0(b)^{-1}\\
&=\kappa_0([cd^{-1},ab^{-1}.\chi])\kappa_0([ab^{-1},\chi]).
\end{align*}
Since $\kappa_0$ clearly preserves inverses, this shows that $\kappa_0$ is a homomorphism. That $\kappa_0$ is continuous follows similarly to the proof that $\kappa_\rho$ is continuous in Lemma~\ref{lem:kappa_rho}.

Finally, if $\kappa_0([cd^{-1},\chi])=e$, then $\rho_0(c)=\rho_0(d)$, so that $c=d$ by injectivity of $\rho_0$. Thus, $\ker(\kappa_0)=\partial\Omega$.
\end{proof}

Consider the \emph{boundary core operator algebra} 
\[
\partial\A_{r,0}(P)\coloneq\overline{\alg}(\{1_{[c,\partial\Omega]} :c\in P_0\})\subseteq C_r^*(I_l^0\ltimes\partial\Omega),
\]
and let $C^*(\partial\A_{r,0}(P))$ denote the C*-subalgebra of $C_r^*(I_l^0\ltimes\partial\Omega)$ generated by $\partial\A_{r,0}(P)$. 

\begin{lemma}[cf. {\cite[Remark~4.5]{St22}}]
\label{lem:unitary}
For every $c\in P_0$, we have $\partial\Omega=\partial\Omega(cP)$, so that $1_{[c,\partial\Omega]}$ is a unitary in $C^*(\partial\A_{r,0}(P))$ for every $c\in P_0$.   
\end{lemma}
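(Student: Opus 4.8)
The plan is to first prove the set identity $\partial\Omega = \partial\Omega(cP)$, and then to read off unitarity of $1_{[c,\partial\Omega]}$ from it. Since $\partial\Omega(cP) = \partial\Omega \cap \Omega(cP)$ and $cP = \im(c) = \dom(c^{-1})$ is a nonempty constructible right ideal, the set $\Omega(cP)$ is a basic compact open subset of $\Omega$; moreover, because $P$ is a monoid we have $\Omega = \Omega(P)$, so $\Omega$ is compact, and hence $\Omega(cP)$ is also closed in $\Omega$. It therefore suffices to show $\Omega_{\max} \subseteq \Omega(cP)$: this gives $\partial\Omega = \overline{\Omega_{\max}} \subseteq \Omega(cP)$, while the inclusion $\partial\Omega(cP) \subseteq \partial\Omega$ is trivial. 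So I would fix a maximal character $\chi$ and argue by contradiction, assuming $\chi(cP) = 0$.

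The key step is then a filter enlargement. Let $F \coloneq \chi^{-1}(1)$, a proper filter on the semilattice $\cJ$. Each $Z \in F$ is a nonempty right ideal of $P$, so $zP \subseteq Z$ for any $z\in Z$, and since $c \in P_0$ we get $cP \cap Z \supseteq cP \cap zP \neq \emptyset$. Hence the filter generated by $F \cup \{cP\}$, namely $F' \coloneq \{Y \in \cJ : Y \supseteq Z \cap cP \text{ for some } Z \in F\}$, does not contain the empty set and is therefore proper; its indicator function $1_{F'}$ is a character of $\cJ$ whose $1$-set $F'$ strictly contains $\chi^{-1}(1)$, because $cP \in F' \setminus F$. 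This contradicts maximality of $\chi$, so $\chi(cP) = 1$ for every $\chi \in \Omega_{\max}$, completing the first part. I expect this enlargement step to be the only substantive point, and it is precisely where membership of $c$ in the core $P_0$ is used.

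For the second part, having established $\partial\Omega = \partial\Omega(cP)$, I would observe that $[c,\partial\Omega]$ is a compact open bisection of the open subgroupoid $I_l^0 \ltimes \partial\Omega$ whose source is $\{\mfs([c,\chi]) : \chi \in \partial\Omega\} = \partial\Omega$ and whose range is $\{c.\chi : \chi \in \partial\Omega\} = \partial\Omega(cP) = \partial\Omega$. Consequently $1_{[c,\partial\Omega]}$ is a partial isometry in $C_r^*(I_l^0\ltimes\partial\Omega)$ with $1_{[c,\partial\Omega]}^* 1_{[c,\partial\Omega]} = 1_{\partial\Omega} = 1_{[c,\partial\Omega]} 1_{[c,\partial\Omega]}^*$. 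Finally, $\partial\Omega$ is compact, so $1_{\partial\Omega}$ is a unit for the ambient groupoid C*-algebra; since $e \in P_0$ and $1_{[e,\partial\Omega]} = 1_{\partial\Omega}$ belongs to $\partial\A_{r,0}(P)$, the element $1_{\partial\Omega}$ is the identity of $C^*(\partial\A_{r,0}(P))$, and thus $1_{[c,\partial\Omega]}$ is a unitary in $C^*(\partial\A_{r,0}(P))$. The remaining assertions (clopenness of $\Omega(cP)$, passing to the closure, the bisection bookkeeping, and the identification of the unit) are all routine.
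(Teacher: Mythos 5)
Your proof is correct, but for the main inclusion $\partial\Omega\subseteq\partial\Omega(cP)$ it takes a genuinely different route from the paper. The paper splits on whether $0\in I_l(P)$: if not, $\partial\Omega$ is the single character $\chi_\infty$; if so, it invokes Exel's tightness characterization of $\partial\Omega$, observes that $\{cP\}$ is a finite cover of $P$ (since $\cJ_P=\{dP : d\in P\}\cup\{\emptyset\}$ and $c\in P_0$), and concludes $\chi(cP)=1$ for every tight $\chi$ in one line. You instead work directly from the definition $\partial\Omega=\overline{\Omega_\max}$: since $\Omega(cP)$ is compact and $\Omega$ is Hausdorff, $\Omega(cP)$ is closed, so it suffices to treat maximal characters, and there you run a filter-enlargement argument (adjoining $cP$ to $\chi^{-1}(1)$ stays proper because $c$ is in the core) to contradict maximality. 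Both arguments are elementary and both isolate the core hypothesis $c\in P_0$ at the right spot; yours avoids the case split and the appeal to the tight-character description of the boundary, at the cost of the (routine) filter bookkeeping, while the paper's is shorter once \cite[Theorem~12.9]{Exel:comb} is in hand. One small imprecision: $\Omega(cP)$ is closed because it is compact and $\Omega$ is Hausdorff, not because $\Omega$ itself is compact. Your derivation of unitarity from the set identity (source and range of the bisection $[c,\partial\Omega]$ both equal $\partial\Omega$, and $1_{\partial\Omega}=1_{[e,\partial\Omega]}$ is the unit of $C^*(\partial\A_{r,0}(P))$) is exactly the intended, and in the paper unstated, consequence.
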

\begin{proof}
Given $c\in P_0$, we need to show that $\partial\Omega\subseteq\partial\Omega(cP)$. Fix $\chi \in \partial\Omega$. If $0\not\in I_l(P)$, then $\partial\Omega=\{\chi_\infty\}=\partial\Omega(cP)$, where $\chi_\infty$ is the character on $\cJ_P$ given by $\chi(X)=1$ for all $X\in \cJ_P$. 

Now assume that $0\in I_l(P)$. Since $cP\cap dP\neq\emptyset$ for all $d\in P$, and $\cJ_P=\{dP : d\in P\}\cup\{\emptyset\}$, we see that $\{cP\}$ is a (finite) cover for the constructible right ideal $P$. Since $\chi$ is a tight character, $\chi(P)=1$ forces $\chi(cP)=1$, i.e., $\chi\in\partial\Omega(cP)$. 
\end{proof}

There is a canonical action of $\cU(P_0)$ by homeomorphisms of $\partial\Omega$ such that $\rho_0(c)$ is mapped to the homeomorphism $\partial\Omega\to\partial\Omega$ given by $\chi\mapsto c.\chi$. We let $\cU(P_0)\ltimes \partial\Omega$ be the transformation groupoid associated with this action.

\begin{proposition}
\label{prop:transformationgpoid}
    There is an isomorphism of topological groupoids $I_l^0\ltimes\partial\Omega\to \cU(P_0)\ltimes \partial\Omega$ given by $[cd^{-1},\chi]\mapsto (\rho_0(c)\rho_0(d)^{-1},\chi)$.
\end{proposition}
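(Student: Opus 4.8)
The plan is to construct the isomorphism explicitly and verify it is a well-defined, bijective, continuous groupoid homomorphism with continuous inverse. Write $\Phi\colon I_l^0\ltimes\partial\Omega\to \cU(P_0)\ltimes\partial\Omega$ for the map $[cd^{-1},\chi]\mapsto(\rho_0(c)\rho_0(d)^{-1},\chi) = (\kappa_0([cd^{-1},\chi]),\chi)$, where $\kappa_0$ is the homomorphism from Lemma~\ref{lem:kappa_0}. First I would check well-definedness: this is precisely the content of the first paragraph of the proof of Lemma~\ref{lem:kappa_0}, since $\Phi([cd^{-1},\chi])$ is the pair consisting of $\kappa_0([cd^{-1},\chi])$ and $\mfs([cd^{-1},\chi]) = \chi$; both coordinates are well-defined functions of the groupoid element. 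Next, $\Phi$ is a groupoid homomorphism: the first coordinate is multiplicative by Lemma~\ref{lem:kappa_0}, the second coordinate tracks the source, and one checks compatibility with the transformation-groupoid multiplication $(\rho_0(c)\rho_0(d)^{-1}, ab^{-1}.\chi)(\rho_0(a)\rho_0(b)^{-1},\chi) = (\rho_0(c)\rho_0(d)^{-1}\rho_0(a)\rho_0(b)^{-1},\chi)$, which matches $\Phi([cd^{-1}ab^{-1},\chi])$ by the computation in Lemma~\ref{lem:kappa_0}. Inverses are also clearly preserved.

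For surjectivity, an arbitrary element of $\cU(P_0)\ltimes\partial\Omega$ has the form $(\rho_0(c)\rho_0(d)^{-1},\chi)$ with $c,d\in P_0$; since $c,d\in P_0$ we have $\chi(dP)=1$ automatically by Lemma~\ref{lem:unitary} (as $\partial\Omega=\partial\Omega(dP)$), so $[cd^{-1},\chi]$ is a legitimate element of $I_l^0\ltimes\partial\Omega$ mapping to it. For injectivity, suppose $\Phi([cd^{-1},\chi])=\Phi([ab^{-1},\chi'])$. Then $\chi=\chi'$ and $\rho_0(c)\rho_0(d)^{-1}=\rho_0(a)\rho_0(b)^{-1}$. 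Using the Ore condition pick $x,y\in P_0$ (recall $P_0$ is itself right Ore, and indeed one can arrange $x,y$ in $P_0$ since $P_0$ is a submonoid with $cP_0\cap aP_0\neq\emptyset$) with $dx=by$ inside $P_0$; then $\rho_0(cx)=\rho_0(c)\rho_0(d)^{-1}\rho_0(dx)=\rho_0(a)\rho_0(b)^{-1}\rho_0(by)=\rho_0(ay)$, so $cx=ay$ by injectivity of $\rho_0$. This witnesses $[cd^{-1},\chi]=[ab^{-1},\chi]$ in the groupoid, since $cd^{-1}$ and $ab^{-1}$ agree on $dxP$ and $\chi(dxP)=1$ (again by Lemma~\ref{lem:unitary}). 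Hence $\Phi$ is bijective.

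Finally, continuity of $\Phi$ follows exactly as in Lemma~\ref{lem:kappa_rho}: a basic open bisection $[cd^{-1},U]$ maps to the basic open set $\{(\rho_0(c)\rho_0(d)^{-1},\chi):\chi\in U\}$ of the transformation groupoid, and preimages of basic opens in $\cU(P_0)\ltimes\partial\Omega$ are open by the same bookkeeping. Continuity of $\Phi^{-1}$ is symmetric: $\Phi^{-1}(\rho_0(c)\rho_0(d)^{-1},\chi)=[cd^{-1},\chi]$, and one checks that preimages of the basic bisections $[cd^{-1},U]$ are open in $\cU(P_0)\ltimes\partial\Omega$. Since $\Phi$ is a continuous bijective open homomorphism of étale groupoids, it is an isomorphism of topological groupoids.

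The main obstacle I anticipate is the bookkeeping in the injectivity argument — specifically ensuring the equalizing elements $x,y$ can be taken inside the core $P_0$ (not merely in $P$) so that $dxP$ is a constructible right ideal of the form guaranteed to have $\chi$-value $1$ on all of $\partial\Omega$ via Lemma~\ref{lem:unitary}; this is what makes the two representatives $cd^{-1}$, $ab^{-1}$ agree on a set of full $\chi$-measure. Everything else is a direct transcription of the arguments already used for $\kappa_0$ and for the groupoid-embeddable case in Proposition~\ref{prop:idpure}.
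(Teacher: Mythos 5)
Your proposal is correct and follows essentially the same route as the paper: the paper's proof simply observes that the map is $[cd^{-1},\chi]\mapsto(\kappa_0([cd^{-1},\chi]),\mfs(\cdot))$, cites Lemma~\ref{lem:kappa_0} for well-definedness, and leaves the rest as ``a short calculation'', which is exactly what you carry out. Your identification of the one genuinely delicate point --- that the equalizing pair $x,y$ with $dx=by$ can be taken inside the core $P_0$ (so that $\rho_0(x),\rho_0(y)$ make sense and $\chi(dxP)=1$ via Lemma~\ref{lem:unitary}) --- is the right place to be careful, and it is justified by the right Ore property of $P_0$ asserted in the paper.
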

\begin{proof}
For $[cd^{-1},\chi]\in I_l^0\ltimes\partial\Omega$, note that $(\rho_0(c)\rho_0(d)^{-1},\chi)=(\kappa_0(cd^{-1}),\mfs((\rho_0(c)\rho_0(d)^{-1},\chi)))$, where $\kappa_0$ is the cocycle from Lemma~\ref{lem:kappa_0}. This shows that the map is well-defined; a short calculation shows that this map is indeed an isomorphism of topological groupoids. 
\end{proof}

As a straightforward consequence of Proposition~\ref{prop:transformationgpoid}, we have the canonical *-isomorphism between reduced groupoid C*-algebras $C_r^*(I_l^0\ltimes\partial\Omega)\cong C(\partial \Omega)\rtimes^r\cU(P_0)$. 
Under this isomorphism, $C^*(\partial\A_{r,0}(P))$ is carried onto the (copy of) $C_r^*(\cU(P_0))$ inside $C(\partial \Omega)\rtimes^r\cU(P_0)$, where we have used here compactness of $\partial\Omega$. Thus, $C^*(\partial\A_{r,0}(P))$ carries a canonical normal coaction $\delta_0\colon \cU(P_0)\coacts C^*(\partial\A_{r,0}(P))$ that is determined by $\delta_0(1_{[c,\partial\Omega]})=1_{[c,\partial\Omega]}\otimes u_{\rho_0(c)}$ for all $c\in P_0$. 
In particular, $\delta_0$ restricts to a normal coaction on $\partial\A_{r,0}(P)$, which we also denote by $\delta_0$.
The faithful conditional expectation on $C^*(\partial\A_{r,0}(P))$ associated with $\delta_0$ is then the tracial state obtained from the canonical trace on $C_r^*(\cU(P_0))$ via the isomorphism $C^*(\partial\A_{r,0}(P))\cong C_r^*(\cU(P_0))$.

In order to identify the C*-envelope, we shall now utilize a recent result by Starling \cite[Theorem~4.1]{St22} (see \cite[Theorem~2.1]{BS24} for the precise formulation and proof), stating that for a right LCM monoid $P$, a representation $\pi$ of $C_r^*(I_l\ltimes\partial\Omega)$ is injective if and only if it is injective on $C^*(\partial\A_{r,0}(P))$. 

The following resolves Li's Question~\ref{ques:Li} for all cancellative right LCM monoids, including many aforementioned non group-embeddable examples.

\begin{theorem}
\label{thm:lcm-monoids}
    Let $P$ be a cancellative right LCM monoid. Then, the associated groupoid $I_l\ltimes\Omega$ is Hausdorff, and the map $\pi_\env\colon C_r^*(I_l\ltimes\partial\Omega)\to C_\env^*(\A_r(P))$ from \eqref{eqn:pi} is a *-isomorphism. That is, the C*-envelope $C^*_\env(\A_r(P))$ coincides with the boundary quotient C*-algebra $C_r^*(I_l\ltimes\partial\Omega)$.
\end{theorem}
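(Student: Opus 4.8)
As already noted, $I_l\ltimes\Omega$ is Hausdorff by \cite[Corollary~4.2]{LiGarI}, so Theorem~\ref{thm:ciAr} and Corollary~\ref{cor:lcscC*cover} apply and produce the canonical surjection $\pi_\env\colon C_r^*(I_l\ltimes\partial\Omega)\to C^*_\env(\A_r(P))$ from \eqref{eqn:pi}; the whole problem is to show $\pi_\env$ is injective. The plan is to combine two inputs: Starling's criterion (\cite[Theorem~4.1]{St22}, see \cite[Theorem~2.1]{BS24}), which says that $\pi_\env$ is injective as soon as its restriction to $C^*(\partial\A_{r,0}(P))\cong C_r^*(\cU(P_0))$ is injective, and Theorem~\ref{thm:main} applied to the boundary core operator algebra $\partial\A_{r,0}(P)$, which will let us identify $C^*(\partial\A_{r,0}(P))$ with the C*-envelope of $\partial\A_{r,0}(P)$ and thereby get the required injectivity for free.

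The first and main step is to prove that $C^*(\partial\A_{r,0}(P))$ is the C*-envelope of $\partial\A_{r,0}(P)$. Since $P_0$ is a monoid, $\partial\A_{r,0}(P)$ is unital, and by co-universality there is a surjective $*$-homomorphism $\pi_e\colon C^*(\partial\A_{r,0}(P))\to C^*_\env(\partial\A_{r,0}(P))$ restricting on $\partial\A_{r,0}(P)$ to the canonical completely isometric embedding; I must show $\pi_e$ is injective. Apply Theorem~\ref{thm:main} to the normal coaction $\delta_0\colon\cU(P_0)\coacts\partial\A_{r,0}(P)$ to obtain a normal coaction $\delta_{0,\env}$ on $C^*_\env(\partial\A_{r,0}(P))$ with $\delta_{0,\env}(w_c)=w_c\otimes u_{\rho_0(c)}$, where $w_c\coloneq\pi_e(1_{[c,\partial\Omega]})$. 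By Lemma~\ref{lem:unitary}, each $1_{[c,\partial\Omega]}$ is a unitary in $C^*(\partial\A_{r,0}(P))$, so each $w_c$ is a unitary; hence the $w_c$ generate $C^*_\env(\partial\A_{r,0}(P))$, and, using that $P_0$ is right Ore so that $\cU(P_0)=\{\rho_0(c)\rho_0(d)^{-1}:c,d\in P_0\}$, they extend to a unitary representation $g\mapsto w_g$ of $\cU(P_0)$ with $\pi_e(\lambda_g)=w_g$ under $C^*(\partial\A_{r,0}(P))\cong C_r^*(\cU(P_0))$. The faithful conditional expectation of the normal coaction $\delta_{0,\env}$ annihilates $w_g$ for $g\neq e$ and fixes $w_e=1$, so its range is $\Cz 1$ and it is therefore a faithful tracial state $\tilde\tau$ on $C^*_\env(\partial\A_{r,0}(P))$ with $\tilde\tau(w_g)=0$ for $g\neq e$. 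Consequently $\tilde\tau\circ\pi_e$ agrees with the canonical (faithful) trace $\tau$ on the dense span of $\{\lambda_g\}$ in $C_r^*(\cU(P_0))$, hence $\tilde\tau\circ\pi_e=\tau$. Faithfulness of $\tau$ forces $\pi_e$ to be injective: if $0\neq x\in C^*(\partial\A_{r,0}(P))$ then $0<\tau(x^*x)=\tilde\tau(\pi_e(x)^*\pi_e(x))\le\|\pi_e(x)\|^2$.

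Granting that $C^*(\partial\A_{r,0}(P))=C^*_\env(\partial\A_{r,0}(P))$, injectivity of $\pi_\env$ on $C^*(\partial\A_{r,0}(P))$ is purely formal. The restriction of $\pi_\env$ to $\partial\A_{r,0}(P)\subseteq\partial\A_r(P)$ is completely isometric by Corollary~\ref{cor:lcscC*cover}, so the C*-subalgebra $C^*(\pi_\env(\partial\A_{r,0}(P)))$ of $C^*_\env(\A_r(P))$ is a C*-cover of $\partial\A_{r,0}(P)$. By co-universality there is a surjection from this C*-cover onto $C^*_\env(\partial\A_{r,0}(P))=C^*(\partial\A_{r,0}(P))$ which restricts to the canonical embedding on $\partial\A_{r,0}(P)$; precomposing it with $\pi_\env|_{C^*(\partial\A_{r,0}(P))}$ yields a $*$-endomorphism of $C^*(\partial\A_{r,0}(P))$ that is the identity on the generating subalgebra $\partial\A_{r,0}(P)$, hence the identity. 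Thus $\pi_\env|_{C^*(\partial\A_{r,0}(P))}$ has a left inverse and is injective, and Starling's criterion then gives that $\pi_\env$ is injective on all of $C_r^*(I_l\ltimes\partial\Omega)$. Combined with its surjectivity this shows $\pi_\env$ is a $*$-isomorphism, which is the assertion; together with the Hausdorffness cited above this completes the proof.

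The main obstacle is the first step, i.e.\ the identification $C^*(\partial\A_{r,0}(P))=C^*_\env(\partial\A_{r,0}(P))$ (equivalently, that $C_r^*(\cU(P_0))$ is the C*-envelope of the non-selfadjoint algebra $\partial\A_{r,0}(P)$). This is precisely where Theorem~\ref{thm:main} is essential: it is what makes a faithful conditional expectation available on the C*-envelope, which can then be paired against the faithful canonical trace on $C_r^*(\cU(P_0))$. Once this is in hand, Starling's criterion and the co-universality manipulations are routine, and no group-embeddability of $P$ is needed anywhere, only that its core $P_0$ is right Ore.
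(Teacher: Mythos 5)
Your proposal is correct and follows essentially the same route as the paper: Starling's criterion reduces everything to injectivity of $\pi_\env$ on the core C*-algebra $C^*(\partial\A_{r,0}(P))\cong C_r^*(\cU(P_0))$, and Theorem~\ref{thm:main} applied to the normal coaction $\delta_0$ on the core operator algebra supplies the faithful expectation onto the scalars (equivalently, a state pulling back to the canonical trace) that forces this injectivity. The only difference is organizational: you first establish the intermediate identification $C^*_\env(\partial\A_{r,0}(P))\cong C_r^*(\cU(P_0))$ and then conclude by a co-universality retraction, whereas the paper transports $\delta_0$ to $\A_{r,0}(P)\subseteq C^*_\env(\A_r(P))$ and runs the same faithfulness argument through a commuting diagram of conditional expectations.
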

\begin{proof}
Throughout this proof, we identify $\A_r(P)$ with its image in $C_\env^*(\A_r(P))$ under the canonical completely isometric embedding.
Since $P$ is right LCM,  the groupoid $I_l\ltimes\Omega$ is Hausdorff by \cite[Corollary~3.24]{Nor14} and \cite[Corollary~10.9]{Exel:comb}.
By \cite[Theorem~4.1]{St22} (see also \cite[Theorem~2.1]{BS24}) the canonical map $\pi_\env$ is injective if and only if the restriction  $\varphi\coloneq\pi_\env\vert_{C^*(\partial\A_{r,0}(P))}$ is injective. 
Put
\[
\A_{r,0}(P)\coloneq \overline{\alg}(\{\lambda_P(c) : c\in P_0\})\subseteq \A_r(P).
\]
By Theorem \ref{thm:ciAr}, we have that $\pi_\env\vert_{\partial\A_r(P)}\colon \partial\A_r(P)\to \A_r(P)$ is completely isometric and maps generators to generators, 
so that $\pi_\env$ restricts to a completely isometric isomorphism from $\partial\A_{r,0}(P)$ onto $\A_{r,0}(P)$.
Recall that $\partial \A_{r,0}(P)$ admits a normal coaction $\delta_0$ of $\mathcal{U}(P_0)$,
so there is a normal coaction $\eps$ of $\cU(P_0)$ on $\A_{r,0}(P)$ given by $\eps\coloneq (\pi_\env\otimes\id_{C^*(\cU(P_0))})\circ\delta_0\circ(\pi_\env\vert_{\partial\A_{r,0}(P)})^{-1}$. 
By Theorem~\ref{thm:main}, this extends to a compatible normal coaction $\eps_\env\colon \cU(P_0)\coacts C_\env^*(\A_{r,0}(P))$. 

Let $C^*(\A_{r,0}(P))$ be the C*-subalgebra of $C_\env^*(\A_r(P))$ generated by $\A_{r,0}(P)$, 
and consider the canonical surjective *-homomorphism $\psi\colon C^*(\A_{r,0}(P))\to C_\env^*(\A_{r,0}(P))$ extending the identity map on $\A_{r,0}(P)$. 
We obtain the following commuting diagram:

\[\begin{tikzcd}
 C^*(\partial\A_{r,0}(P)) \arrow[d,"E^{\delta_0}"]\arrow[r,"\varphi"] & C^*(\A_{r,0}(P))  \arrow[r,"\psi"]& C_\env^*(\A_{r,0}(P))\arrow[d,"E^{\eps_\env}"]\\
  C^*(\partial\A_{r,0}(P))^{\delta_0}_e \arrow[rr,"\psi\circ\varphi\vert_{C^*(\partial\A_{r,0}(P))^{\delta_0}_e}"] & & C_\env^*(\A_{r,0}(P))^{\eps_\env}_e,
\end{tikzcd}\]
where $E^{\delta_0}$ and $E^{\eps_\env}$ are the faithful conditional expectations associated with $\delta_0$ and $\eps_\env$, respectively. 
Since $\pi_\env$ is injective on $C^*(\partial\A_{r,0}(P))^{\delta_0}_e=\Cz1_{\partial\Omega}$,
commutativity of the above diagram and faithfulness of $E^{\delta_0}$ implies that $\psi\circ\varphi$ is injective.
Therefore, $\varphi$ is injective. 
\end{proof}

\end{document}